\documentclass[10pt]{amsart}
\usepackage{amsmath,amssymb,amsfonts}
\usepackage{enumerate}
\usepackage[pagebackref=true]{hyperref}
\hypersetup{
   colorlinks=true,
    linkcolor=blue,
   filecolor=magenta,      
    urlcolor=cyan,
    pdftitle={Overleaf Example},
    pdfpagemode=FullScreen,
    }
\usepackage{color}
\usepackage[dvipsnames]{xcolor}
\definecolor{sz}{RGB}{115,45,2}

\newcommand{\tb}[1]{\textcolor{sz}{\bf #1}}
\usepackage{geometry}
\usepackage{tikz}
\usepackage{tikz-cd}
\usepackage{mathtools}

\usepackage{lipsum}

\usepackage{epsfig}  		
\usepackage{epic,eepic}  
\usepackage{graphicx}
\usepackage{verbatim}
\usepackage{mathdots}
\usepackage{leftidx}

\newcommand{\bZ}{\mathbb{Z}}

\theoremstyle{plain}
\newtheorem{thm}{Theorem}[section]
\newtheorem{cor}[thm]{Corollary}
\newtheorem{lem}[thm]{Lemma}
\newtheorem{prop}[thm]{Proposition}

\newtheorem{assumption}[thm]{Assumption}

\newtheorem{defn}[thm]{Definition}

\theoremstyle{remark}

\newtheorem{remark}[thm]{Remark}

\DeclareMathOperator{\Hom}{Hom}

\begin{document}
\title{Stability of the exterior cube $\gamma$-factors for $\mathrm{GL}(6)$}

\author{Taiwang  Deng}
\address[T.D.]{Beijing Institute of Mathematical Sciences and Applications (BIMSA), Huairou District, 100084, Beijing\\
China}
\email{dengtaiw@bimsa.cn}

\author{Dongming She}
\address[D.S.]{Beijing Institute of Mathematical Sciences and Applications (BIMSA), Huairou District, 100084, Beijing\\
China}
\email{shedongming@bimsa.cn}

\keywords{Langlands-Shahidi method, local coefficients, generic representations, partial Bessel functions}

\subjclass[2020]{11F70;22E50;22E35; 20G25}

\begin{abstract}
We prove the stability of the Langlands-Shahidi local $\gamma$-factor for the exterior cube representation of $\mathrm{GL}_6$. More precisely, if $\pi_1$ and $\pi_2$ are irreducible admissible generic representations of $\mathrm{GL}_6(F)$ with the same central character, then
\[
\gamma(s,\pi_1\otimes\chi,\wedge^3,\psi)=
\gamma(s,\pi_2\otimes\chi,\wedge^3,\psi)
\]
for every sufficiently ramified character $\chi$ of $F^\times$, where $\chi$ is regarded as a character of $\mathrm{GL}_6(F)$ through the determinant. The proof uses the realization of the exterior cube representation by the maximal parabolic subgroup of the simply connected group of type $E_6$. We give an explicit description of the relevant geometric quotient $U_M\backslash N'$, compute its invariant measure, and relate Shahidi's partial Bessel functions to partial Bessel integrals on the Levi subgroup. The desired stability then follows from an asymptotic expansion of these partial Bessel integrals and the vanishing of highly ramified Mellin transforms.
\end{abstract}

\maketitle
\tableofcontents
\section{Introduction}
Let $F$ be a $p$-adic field and let $\psi$ be a non-trivial additive character of $F$. Many local factors attached to representations of reductive groups over $F$ satisfy a stability property: after twisting by sufficiently ramified characters, the factor depends only on the central character of the representation. This phenomenon goes back to Deligne's work on local constants \cite{Deligne73} and to the lemma of Jacquet and Shalika on highly ramified $\epsilon$-factors \cite{Jac85}.

In the Langlands-Shahidi method, stability is studied through local coefficients and their expression as Mellin transforms of partial Bessel functions. This approach was developed by Shahidi \cite{Sha02,Sha10} and has been used in several cases, including the stability of $\gamma$-factors for quasi-split groups \cite{Cog08} and the exterior and symmetric square factors for general linear groups \cite{CST17}. In the setting of the generalized doubling method, Cai, Friedberg, and Kaplan constructed Rankin-Selberg local $\gamma$-factors attached to classical groups and general linear groups, and proved that, for generic representations, these factors agree with those constructed by the Langlands-Shahidi method \cite[Corollary 4.5]{Cai2022TheGD}. They also proved stability in these cases \cite[Lemma 7.3]{Cai2022TheGD}. For the same Rankin-Selberg factors, stability also follows from \cite[Appendix A]{AGIKM}. We refer to \cite[Introduction]{DS25} for further references.

The purpose of this paper is to prove the corresponding stability result for the exterior cube $\gamma$-factor of $\mathrm{GL}_6$. This case is not covered by the preceding results and is expected to be useful in questions related to the exterior cube functorial lift
\[
\mathrm{GL}_6 \longrightarrow \mathrm{GL}(\wedge^3\mathbb{C}^6)\simeq \mathrm{GL}_{20}.
\]
We state the main result explicitly.

\begin{thm}\label{main-theorem}
Let $\pi_1$ and $\pi_2$ be irreducible admissible $\psi$-generic representations of $\mathrm{GL}_6(F)$ with the same central character. Then, for every sufficiently ramified character $\chi$ of $F^\times$,
\[
\gamma(s,\pi_1\otimes\chi,\wedge^3,\psi)=
\gamma(s,\pi_2\otimes\chi,\wedge^3,\psi),
\]
where $\chi$ is viewed as a character of $\mathrm{GL}_6(F)$ through the determinant.
\end{thm}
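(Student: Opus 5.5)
We realize the exterior cube factor inside the Langlands--Shahidi machinery for the simply connected group $G$ of type $E_6$, with maximal parabolic $P=MN$ whose Levi $M$ has derived group of type $A_5$. For this parabolic the adjoint action of ${}^LM$ on ${}^L\mathfrak n$ is irreducible and is $\wedge^3$ of the standard representation of $\mathrm{GL}_6$, up to a central twist. So $\gamma(s,\pi\otimes\chi,\wedge^3,\psi)$ equals, up to harmless normalizations depending only on the central character, Shahidi's local coefficient $C_\psi(s,\tilde\pi\otimes\chi)$. Here $\tilde\pi$ is the representation of $M(F)$ obtained from $\pi$ via a fixed isogeny $M\to \mathrm{GL}_6\times\mathrm{GL}_1$, whose kernel and cokernel are controlled by the central character. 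The theorem is therefore reduced to stability of $C_\psi(s,\tilde\pi\otimes\chi)$ for $\tilde\pi$ with fixed central character.

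Next we invoke Shahidi's integral representation of the inverse local coefficient, in the form of \cite{Sha02,Cog08,CST17}. We write
\[
C_\psi(s,\tilde\pi\otimes\chi)^{-1}=\gamma\cdot\int_{Z_M^0U_M\backslash N'} j_{\tilde v,N_0}(\dot n)\,(\omega_{\pi}^{-1}\chi^{-6})(\text{pr}(\dot n))\,q^{\langle s\tilde\alpha,H_M(\dot n)\rangle}\,d\dot n,
\]
where $j_{\tilde v,N_0}$ is a partial Bessel function on the big cell and $\dot n=m u \bar n$ is the Bruhat decomposition. This step needs an explicit description of the quotient $U_M\backslash N'$, where $N'\subset \bar N$ is the open set meeting the big cell. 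We would give it by exhibiting a section of the form $\dot w_0^{-1}$ times a torus element times a representative of $U_M\backslash U_M\dot w_M U_M$, with $w_M$ the long Weyl element relative to $M$. We would then compute the invariant measure $d\dot n$ as an explicit character of the torus coordinates times additive Haar measure. This requires an $E_6$ root computation with $\dim N=20$: track which roots of $N$ are hit and compute the Jacobian of the map $\bar n\mapsto m(\bar n)$ restricted to the section.

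The core step is to relate $j_{\tilde v,N_0}(\dot n)$ to a partial Bessel integral on $M\cong$ (cover of) $\mathrm{GL}_6$. We show that for $\dot n$ in the support, $j_{\tilde v,N_0}(\dot n)$ equals $B_\varphi^{M}(m(\dot n))$ for a suitable $\varphi$ in the Whittaker model of $\tilde\pi$, up to a factor independent of $\pi$. Here $B^M_\varphi$ is the Cogdell--Shahidi--Tsai partial Bessel integral for $\mathrm{GL}_6$. Following \cite{CST17}, we then apply the asymptotic expansion of partial Bessel integrals for $\mathrm{GL}_n$ (Cogdell--Shahidi--Tsai, and Jacquet's germ expansion). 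It states that for $m$ near the relevant Bruhat cells,
\[
B_\varphi(m)=\sum_{w} \text{(term depending only on }\omega_\pi\text{)} + \sum \text{(finitely many terms } c_i(\pi)\,\eta_i(z)\,f_i(\cdot)\text{)},
\]
where the $\pi$-dependent terms are supported in a shape where the torus variable dominates. Substituting this into the Mellin integral, the $\pi$-independent part gives identical contributions for $\pi_1,\pi_2$. Each $\pi$-dependent term becomes a Mellin transform of $\chi$ (through $\chi^{-6}$ or $\chi$ composed with a coordinate) against a function that is locally constant with a bound independent of $\chi$. Such a transform vanishes once $\chi$ is sufficiently ramified, by the standard vanishing of highly ramified Mellin transforms (Jacquet--Shalika type).

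The main obstacle is the compatibility step: showing that the $E_6$ partial Bessel function restricted to the section of $U_M\backslash N'$ really reduces to the $\mathrm{GL}_6$ partial Bessel integral. A related difficulty is that the torus coordinates produced by the section interact with the twist $\chi$ through a character with nontrivial exponent, for instance $\chi^{\pm2}$ or $\chi^{3}$. This forces a check that "sufficiently ramified" survives raising to these powers, which it does away from small residual characteristic. Our first attempt would use the explicit section to write $\dot w_0^{-1}\dot n$ in the form $u\, m\, \bar n'$ with $m=\text{diag-part}\cdot \dot w_M$-part. We would then follow the strategy of \cite{CST17,DS25}, showing that the domain $N_0$ can be enlarged to make the integration over $U_M$-directions collapse to the Levi Bessel integral.
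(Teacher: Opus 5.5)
Your outline follows the paper's architecture. You realize $\wedge^3$ through the $E_6$ parabolic with $M_D\simeq\mathrm{SL}_6$, write $C_\psi(s,\sigma)^{-1}$ as a Mellin transform of partial Bessel functions over $Z_M^0U_M\backslash N'$, identify these with partial Bessel integrals on $M$, expand asymptotically, and kill the non-leading terms with ramified Mellin transforms. However, the structural input is misstated. $N$ is not a $20$-dimensional space carrying an irreducible $\wedge^3$. In fact $|\Phi_N|=21$, $N$ is a Heisenberg group with center $U_{\gamma_0}$, $\gamma_0=(122321)$, and ${}^L\mathfrak n=r_1\oplus r_2$ with $r_2$ one-dimensional. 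Hence $C_\psi(s,\sigma)=\gamma(s,\pi^\vee,\wedge^3,\psi^{-1})\gamma(2s,\omega_{\pi^\vee},\psi^{-1})$. Your hedge about normalizations depending only on the central character happens to absorb this Tate factor, so the reduction survives. But everything downstream changes. $U_M\backslash N'$ is $6$-dimensional and contains the central coordinate. The cutoff $\overline N_{0,\kappa}$ must give $-\gamma_0$ doubled depth in order to be a subgroup. And $\chi$ enters through $\omega_\pi^{-2}\chi^{-12}(\dot x_\alpha)$ together with $j_{\sigma_\chi,\kappa}=\chi^3(\gamma_0(\dot m))\,j_{\sigma,\kappa}$, not through a single $\chi^{-6}$.

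The genuine gap is the step you yourself flag, and the mechanism you propose for it does not exist here. You expect, as in \cite{CST17}, a section on which $\mathcal N(\dot n)$ is, up to the $U_M$-action, a torus element times $\dot w_M$, so that the $\mathrm{GL}_n$ expansion applies directly. Here $\Theta_M=\mathrm{Ad}(\dot w_0^{-1})$ is trivial on $U_M$, so $U_M$ acts on the big cell of $M$ by ordinary conjugation. The slice $\{\dot w_M a v: a\in A,\ v\in U_M\}$ for that action is $21$-dimensional, and the image of $\mathcal N$ is only a $6$-dimensional subfamily of it. Concretely, Proposition \ref{BruhatDecomp} gives $\mathcal N(\prod_i x_{\gamma_i}(c_i))=u_1t\dot w_Mu_2$. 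Here $t$ is confined to the $4$-dimensional subtorus $\alpha_1^\vee(t_1)\alpha_2^\vee(t_2)\alpha_3^\vee(t_3)\alpha_4^\vee(t_4)\alpha_5^\vee(t_3)\alpha_6^\vee(t_1)$, and the remaining two parameters are carried by $u_2u_1\neq 1$. Moreover, $\mathcal N$ is \'etale of degree $2$ on $U_M\backslash N'$ with deck transformation $\mathrm{Ad}(\alpha^\vee(-1))$. It becomes an isomorphism onto its image only after passing to $Z_M^0U_M\backslash N'$. So there is no torus section, and the paper has to do three things instead:
\begin{enumerate}[(i)]
\item It constructs the slice $\prod_{i=0}^5x_{\gamma_i}(c_i)$ inside $N$ from Brion's generators of $F[\wedge^3F^6]^{U_M}$ (Theorem \ref{orbit-space-representative}), and gets the measure $|c_3|^2|c_4|^4|c_5|^9\prod dc_i$ by torus equivariance.
\item It transports the cutoff to a function $\varphi_\kappa$ on $\mathcal N(N')$. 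This is well defined only because $\overline N_{0,\kappa}$ is stable under $\mathrm{Ad}(\alpha^\vee(-1))$.
\item It proves a descent for partial Bessel integrals at points $u_1^-\dot w l u_2^-$ with nontrivial unipotent parts. This uses Assumption \ref{involution}, Lemma \ref{Twistedcentralizer}, and Proposition \ref{Descentformula} with $n_0=u_2^-\Theta_M(u_1^-)$. It also needs a uniform smoothness statement in $c'\in A'_{R,w'}$ while $u_1^-,u_2^-$ vary (Proposition \ref{uniform-smooth}).
\end{enumerate}
Quoting the $\mathrm{GL}_n$ expansion of \cite{CST17} supplies none of this. Similarly, for the final vanishing you must check that the character through which $\chi$ enters is nontrivial on each relevant torus $A_{R',w'}$. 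In the paper's coordinates this character is $\chi({a_1'}^{12}{a_2'}^{3})$, and the relevant torus is $A_{w'}$ intersected with the $4$-dimensional image torus. A bound that is merely independent of $\chi$ does not give this.

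Two smaller points. The residual-characteristic caveat is unnecessary. For $r$ large, $(1+\mathfrak p^r)^k=1+\mathfrak p^{r+v_F(k)}$, so once the conductor of $\chi$ is large, the conductor of $\chi^k$ is that of $\chi$ minus $v_F(k)$. This holds in every residue characteristic, so no restriction on $p$ is needed. Also, $N'$ lies in $N$, not in $\overline N$.
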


By the standard reduction using the structure theory of generic representations and the multiplicativity of local factors, it is enough to prove Theorem \ref{main-theorem} for supercuspidal representations. This is the case treated in the body of the paper.

The exterior cube representation of $\mathrm{GL}_6(\mathbb{C})$ appears naturally in the Langlands-Shahidi method from the simply connected split group $G$ of type $E_6$. Let $P=MN$ be the standard maximal parabolic subgroup of $G$ obtained by removing the simple root $\alpha_2$. The derived group of $M$ is isomorphic to $\mathrm{SL}_6$, and the adjoint action of the dual Levi group on the Lie algebra of the dual unipotent radical decomposes as
\[
r_1\oplus r_2,\qquad r_1\simeq \wedge^3\mathbb{C}^6,
\]
where $r_2$ is one-dimensional. If $\pi$ is a supercuspidal generic representation of $\mathrm{GL}_6(F)$ and $\sigma$ is the corresponding representation of $M(F)$ obtained from the map
\[
\xi:M\longrightarrow \mathrm{GL}_1\times \mathrm{GL}_6,
\]
then the local coefficient satisfies
\[
C_\psi(s,\sigma)=
\gamma(s,\pi^\vee,\wedge^3,\psi^{-1})
\gamma(2s,\omega_{\pi^\vee},\psi^{-1}).
\]
Thus, for two representations with the same central character, the one-dimensional Tate factor is the same after twisting. Up to replacing the representations, the twisting character, and the additive character by their inverses, the stability of the exterior cube factor follows from the stability of the corresponding local coefficients.

The above realization also has a useful interpretation in terms of $L$-monoids. In Section \ref{GMpair}, we show that the map $\xi$ lands in the $L$-monoid attached to the exterior cube representation, and that the generalized determinant on this monoid pulls back to the character $\gamma_0$ of $M$. This compatibility explains the role of $\gamma_0$ in the normalization of the local coefficient and is one point where the exterior cube case has a structure not visible in the earlier stability arguments.

The next issue is to make Shahidi's local coefficient formula explicit. The formula is an integral over the $F$-points of a geometric quotient $U_M\backslash N'$, where $U_M=U\cap M$ acts on the unipotent radical $N$ by conjugation and $N'\subset N$ is a suitable Zariski open dense subset. Unlike in several preceding cases, the orbit structure is not most efficiently determined by direct root-by-root calculations. Instead, in Theorem \ref{orbit-space-representative}, we use invariant theory, more precisely Brion's description of the invariants for the exterior cube representation, to construct an explicit slice for the relevant open quotient. This gives representatives
\[
\prod_{i=0}^5 x_{\gamma_i}(c_i),\qquad (c_0,\ldots,c_5)\in F^6,
\]
for a specific set of six roots $\gamma_i$ in the unipotent radical of the $E_6$ parabolic. After shrinking to a dense open subset, the action has trivial stabilizers and the quotient is realized as an open subset of affine $6$-space in these coordinates.

For the stability argument, the invariant measure on this quotient has to be known explicitly. We compute the Jacobian of the orbit map
\[
\Phi:U_M\times F^6\longrightarrow N,\qquad (u,\dot n)\mapsto u\dot n u^{-1},
\]
and obtain the quotient measure
\[
|c_3|^2|c_4|^4|c_5|^9\prod_{i=0}^5dc_i
\]
on $U_M\backslash N'$. After passing further to $Z_M^0U_M\backslash N'$, we set $c_5=1$ and work on an open subset $R'\subset (F^\times)^5$ with measure
\[
d\dot n=|c_3|^2|c_4|^4\prod_{i=0}^4dc_i.
\]
This explicit measure is one of the ingredients which allows the local coefficient formula to be compared with Mellin transforms over tori.

The second geometric input is the Bruhat decomposition
\[
\dot w_0^{-1}n=mn'\overline n,
\]
which defines a rational map $\mathcal{N}:N'\rightarrow M$. For the above representatives of $U_M\backslash N'$, the resulting formulas are considerably more complicated than in the cases previously treated by similar methods. In Proposition \ref{BruhatDecomp}, the explicit Bruhat decomposition and the formula for $\mathcal{N}$ were first found by a computation in Magma \cite{Magma97} and are then verified by an exact Chevalley-group calculation. The induced map on $U_M\backslash N'$ is \'etale of degree two onto its image, and after passing to the quotient by $Z_M^0$ it becomes an isomorphism onto its image over a further dense open subset. This makes it possible to replace the partial Bessel functions appearing in Shahidi's quotient integral by partial Bessel integrals on $M$.

To define the partial Bessel functions in the required form, one also needs suitable open compact cutoff subgroups in $N(F)$ and $\overline{N}(F)$. In Section \ref{partial-Bessel-function}, we construct these subgroups explicitly from Chevalley root subgroups, using Moy--Prasad depth notation to record the root depths. This provides a uniform way to control the support conditions entering Shahidi's partial Bessel functions and to compare them with partial Bessel integrals on $M$.

Finally, we apply an asymptotic expansion of partial Bessel integrals along Bruhat cells of the Levi subgroup. The leading term in this expansion depends only on the central character, and therefore cancels when comparing two representations with the same central character. The remaining terms are uniformly smooth in the torus variables. After twisting by a sufficiently ramified character, the corresponding inner Mellin integrals vanish. This proves the equality of the local coefficients for the two twists, and hence the stability of the exterior cube $\gamma$-factors.

The paper is organized as follows. Section 2 recalls notation, intertwining operators, and local coefficients. Section 3 begins with the structure of the relevant $E_6$ parabolic, the realization of the exterior cube representation, and the relation with the corresponding $L$-monoid. We then describe the quotient $U_M\backslash N'$ by invariant-theoretic methods, compute its invariant measure, and analyze the Bruhat map to $M$. The remaining subsections construct the partial Bessel functions using Chevalley root-subgroup filtrations modeled on Moy--Prasad depths, establish the local coefficient formula, prove the asymptotic expansion of partial Bessel integrals, and finally prove Theorem \ref{main-theorem}.

\subsection*{Acknowledgements}
\addtocontents{toc}{\protect\setcounter{tocdepth}{1}}We express our deep gratitude to Prof. Freydoon Shahidi, whose pioneering works  \cite{Sha90}, \cite{Sha02} profoundly influenced this research. 

We are grateful to Prof. Sandeep Verma for helpful discussions on the Heisenberg structure of the unipotent radical $N$ of the parabolic subgroup $P$.

T. Deng is supported by Beijing Natural Science Foundation, No. 1244042 and National Natural Science Foundation of
China, No. 12401013.

\section{Notations and preliminaries}
\subsection{Notations}
\begin{itemize}
\item Let $F$ be a p-adic field with ring of integers $\mathcal{O}_F$. We use $\varpi$ to denote a uniformizer of $F$, and $q_F$ the residue cardinality.
    \item We use letters $G, M, N, U$, etc. to denote both a split algebraic group over a p-adic field $F$ and its group of $F$-points. Unless there is ambiguity, we put $G(F), M(F), U(F)$ for the group of $F$-points to distinguish them. 
    \item We fix an $F$-splitting
    $(B,A, \{x_\alpha: \alpha\in \Delta\})$ of $G$, where $B=AU$ is a 
    Borel subgroup of $G$, $A$ is the maximal (split) torus, $U$ is the unipotent radical of $B$, $\Delta$ is the set of simple roots determined by $B$, and for each root $\alpha$, $x_\alpha: \mathbb{G}_a\rightarrow U_{\alpha}$ is the 1-parameter subgroup associated to $\alpha$.
    \item Denote by $\Phi$(resp. $\Phi^+$, $\Phi^-$) the set of roots(resp. positive roots, negative roots) in $G$. For a standard parabolic subgroup $P=MN$ of $G$, we use $\Phi_M^+$ and $\Phi_M^-$(resp. $\Phi_N$ and $\Phi_N^-$) to denote the set of positive and negative roots in $M$ (resp. roots in $N$ and  $\overline{N}$, where $\overline{N}$ is the opposite of $N$, i.e., the unipotent subgroup generated by negative roots corresponding to the ones in $N$.)
    \item Denote by $W(G,A)$ the Weyl group of $G$. We use $w_G$(resp. $w_M$) to denote the long Weyl group element for $G$(resp. $M$). For $w\in W(G,A)$, $\dot{w}$ denotes a representative of $w$ in $G$. We use $e$ to denote the identity element in $W(G,A)$.
    \item Throughout the paper, we fix a non-trivial additive character $\psi$ of $F$.
\end{itemize}

\subsection{Intertwining operators and local coefficients} We will introduce the theory of intertwining operators and local coefficients in this section. Since all groups in this paper are split over $F$, we will only focus on the split case. 
\subsubsection{Intertwining operators}Let $G$ be a split connected reductive group over $F$ with a fixed $F$-splitting $(B,A, \{x_\alpha: \alpha\in \Delta\})$. Through the splitting, $\psi$ defines a generic character of $U$, we will still denote it by $\psi$. Then the standard parabolic subgroups are in bijection with subsets of $\Delta$. Suppose $\theta,\theta'\subset \Delta$ and let $P=MN$, $P'=M'N'$ be their corresponding parabolic subgroups. Assume that there exists $w\in W(G,A)$ such that $w(\theta)=\theta'$.

Given an irreducible admissible representation $\sigma$ of $M$, and $\nu\in \mathfrak{a}_{P,\mathbb{C}}^*$, where $\mathfrak{a}_{P,\mathbb{C}}^*=\mathfrak{a}_{P}^*\otimes_{\mathbb{R}}\mathbb{C}$,  in which $\mathfrak{a}_{P}^*=X^*(M)_F\otimes\mathbb{R}$, $X^*(M)_F$ is the lattice of $F$-rational characters of $M$, define the normalized parabolic induction as
$$I(\nu,\sigma)=\mathrm{Ind}_{P}^{G}(\sigma\otimes q_F^{\langle \nu+\rho_P, H_P(\cdot)\rangle})\otimes\textbf{1}_{N}$$
where $\rho_P$ is the half sum of positives roots in $N$, and $H_P:M(F)\rightarrow \mathfrak{a}_P=\Hom(X^*(M)_F,\mathbb{R})$ is the Harish-Chandra map characterized by $q_F^{\langle \chi, H_M(m)\rangle}=\vert \chi(m)\vert$ for all $\chi \in X^*(M)_F$.

Suppose  $\sigma$ is $\psi$-generic, i.e., there exists $0\neq \lambda\in \mathrm{Hom}_{U_M}(\sigma,\psi)$. Then the linear functional \begin{align*}\lambda_{\psi}(\nu,\sigma):I(\nu,\sigma)&\longrightarrow \mathbb{C}\\
f &\longmapsto \int_{N_1(F)}\lambda(f(\dot{w}_0^{-1} n'))\psi^{-1}(n')dn'
\end{align*}
defines a non-zero Whittaker functional on $I(\nu,\sigma)$, where $N_1=\dot{w}_0 \overline{N}\dot{w}_0^{-1}$, $\dot{w}_0:=\dot{w}_G\dot{w}_M^{-1}$, chosen to be compatible with $\psi$, i.e. $\psi(\dot{w}_0u\dot{w}_0^{-1})=\psi(u)$ for all $u\in U_M:=U\cap M$.
By Section 3.4 of \cite{Sha10}, the integral above converges as a principal value integral, and therefore holomorphic for all $\nu\in \mathfrak{a}_{P,\mathbb{C}}^*$.

One can also choose the representative $
\dot{w}$ of $w$ to be compatible with $\psi$, then the linear operator\begin{align*}A(\nu, \sigma,\dot{w}):I(\nu,\sigma)&\longrightarrow I(\dot{w}(\nu), \dot{w}(\sigma))\\
f&\longmapsto (g\mapsto \int_{N_{\dot{w}(F)}}f(\dot{w}^{-1}ng)dn)
 \end{align*} is $G$-equivariant, hence defines an intertwining operator between the induced representations. Here $N_{\dot{w}}=U \cap \dot{w}\overline{N}\dot{w}^{-1}$. As discussed in Section 4.1 of \cite{Sha10}, the integral converges absolutely for all $\nu$ in the region  $\mathrm{Re}\langle \nu, \alpha^
\vee\rangle\gg0$ for all simple roots $\alpha$ such that $w(\alpha)<0$. 
\subsubsection{Local coefficients} The composition of the Whittaker functional $\lambda_\psi(\dot{w}(\nu),\dot{w}(\sigma))$ on $I(\dot{w}(\nu), \dot{w}(\sigma))$ with the intertwining operator $A(\nu,\sigma,\dot{w})$ is another non-zero Whittaker functional on $I(\nu,\sigma)$. By uniqueness of local Whittaker functionals, there exists a non-zero constant $C_\psi(\nu,\sigma,\dot{w})$, called the Shahidi local coefficient, such that
$$\lambda_\psi(\nu,\sigma)=C_\psi(\nu,\sigma,\dot{w})\lambda_\psi(\dot{w}(\nu),\dot{w}(\sigma))\circ A(\nu,\sigma,\dot{w}).$$ By the holomorphy of p-adic Whittaker functionals and meromorphy of the intertwining operators, $C_\psi(\nu,\sigma,\dot{w})$ is a meromorphic function of $\nu$.
Theorem 4.2.2. of \cite{Sha10} provides an explicit decomposition of intertwining operators, which reduces the study of local coefficient to the case where $P$ is a maximal standard parabolic subgroup. Then $P=P_\theta$ where $\theta=\Delta-\{\alpha\}$ for some simple root $\alpha$. In this case, set $\nu=s\widetilde{\alpha}$, where $s\in \mathbb{C}$, $\widetilde{\alpha}=\langle \rho,\alpha\rangle^{-1}\rho$, $\rho$ is half sum of the positive roots in $N$. Denote by $C_\psi(s,\sigma):=C_\psi(s\widetilde{\alpha},\sigma, \dot{w}_0)$. Then Theorem 8.3.2, 2), \cite{Sha10} states that the local coefficient is a product of $\gamma$-factors: 
\[C_\psi(s,\sigma)=\prod_{i=1}^m\gamma(is,\sigma^\vee, r_i,\psi^{-1})\]
where $r_i$ is the irreducible constituent of the adjoint action $\mathrm{Ad}: {^L}M\rightarrow \mathrm{GL}({^L}\mathfrak{n})$ generated by the root vectors $X_{\beta^\vee}\in {^L}\mathfrak{n}$ subject to the condition $\langle \widetilde{\alpha},\beta^\vee\rangle=i$. From the global theory of the Langlands-Shahidi method, those $\gamma$-factors are exactly the ones associated to the L-functions appearing in the constant term of Eisenstein series.

We call a standard parabolic subgroup $P=P_\theta$ self-associate if there exists $w\in W(G,A)$ such that $w(\theta)=\theta$. When $P$ is self associate, it is proved in Theorem 6.2 of \cite{Sha02} that the local coefficient is a Mellin transform of certain partial Bessel functions under some assumptions. In this case, the integral representation of local coefficient provides a way to study the analytic properties of the $\gamma$-factors arising from the Langlands-Shahidi method. The analytic stability is one of the most important
properties that can be studied using this formula.
\section{Stability of exterior cube $\gamma$-factors}

\subsection{Some structural results of $E_6$}
\subsubsection{Root system of $E_6$}{\label{RSE6}}
Let $G=E^{\text{sc}}_6$ be the split simply connected group of type $E_6$ over a $p$-adic field $F$. Following Bourbaki's labeling \cite{Bou08}, the root system of $E_6$ has its Dynkin diagram as follows:

\begin{center}
   \begin{tikzpicture}[scale=.7]
    \foreach \x in {1,...,5}{
      \pgfmathparse{ 2*\x +3 };
      \pgfmathresult;
      \let\y\pgfmathresult;
      \fill[black] (\y , 2) circle (7pt) node[anchor=north] {};
    }
   \draw[color=black, line width=1pt] (5,2) -- (13,2);
   \draw[color=black, line width=1pt] (9,2) -- (9,4);
   \fill[black] (9,4) circle (7pt) node[anchor =east] {$\alpha_2$~~};
   \draw  (5,1.4) node[color= black] {$\alpha_1$};
   \foreach \x in {3,...,6}{
      \pgfmathparse{ 2*\x +1 };
      \pgfmathresult;
      \let\y\pgfmathresult;
      \draw  (\y,1.4) node[color= black] {$\alpha_{\x}$};
    }
   \draw (6,4) node[color=black] {$\mathfrak{e}_{6}$};
   \end{tikzpicture}
\end{center}
The set of roots is given by
$$\Phi=\{\pm e_i\pm e_j , \ \ (1\leq i< j \leq 5), \ \ \pm \frac{1}{2}(e_8-e_7-e_6+\sum_{i=1}^5 (-1)^{\nu(i)}e_i), \sum_{i=1}^5 \nu(i) \textrm{ even}\}$$

We have $\vert\Phi\vert=72$, $\vert\Phi^+\vert=36$. Fix a set of simple roots: 
$$\Delta=\{\alpha_1=\frac{1}{2}(e_1+e_8)-\frac{1}{2}(e_2+\cdots e_7), \alpha_2=e_1+e_2, \alpha_3=e_2-e_1, \alpha_4=e_3-e_2,$$$$ \alpha_5=e_4-e_3,\alpha_6=e_5-e_4 \}$$  To simplify notations, throughout this paper, we denote the root $a\alpha_1+b\alpha_2+c\alpha_3+d\alpha_4+e\alpha_5+f\alpha_6$ by $(abcdef)$. 
\subsubsection{The $(G,M)$-pair for the exterior cube representation}{\label{GMpair}} To study the exterior cube representation of $\mathrm{GL}_6$, we apply the Langlands-Shahidi method to the $(G,M)$-pair where $G=E^{\text{sc}}_6$, and $M$ is the Levi factor of the standard parabolic subgroup
$P=P_\theta$ of $G$ given by $\theta=\Delta-\{\alpha_2\}$. Let $P=MN$ be its Levi decomposition. Then
$\vert \Phi_M^+\vert=15$, $
\vert\Phi_N\vert=21$. Note that $N$ is a Heisenberg group with center $F$ given by the longest root $\gamma_0=(122321)$, and Witt index(dimension of maximal isotropic subspace) 10. We may take a basis of a maximal isotropic subspace of $N$ consisting of the  root vectors given by the following set of roots, partially ordered by the Bruhat order, i.e., $\alpha\ge\beta$ if $\alpha-\beta$ is a sum of non-negative roots:
\[E_0=\{(010000), (010100), (011100),(010110),(011110),(111100),(010111),(111110),(011111),(111111)\}\] For each root $\alpha\in E_0$, let $\alpha^*:=\gamma-\alpha$, and set
\[E_0^*=\{\alpha^*: \alpha\in E_0\}\]\[=\{(112321),(112221),(111221),(112211),(111211),(011221),(112210),(011211),(111210),(011210)\}\] Denote by $V_0$(resp. $V_0^*$) the span of the root vectors given by $E_0$(resp. $E_0^*$), and let $V=V_0\oplus V_0^*\oplus F$. Then $N\simeq H(V)$, the Heisenberg group associated to $V$.

Let $M_D$ be the derived group of $M$, then
We have $M_D\simeq \mathrm{SL}_6$. The split component $A_M$ of $M$ is
$$A_M:=(\cap_{\alpha\in \theta}\ker \alpha)^\circ=\{\gamma_0^\vee(t)=\alpha_1^\vee(t)\alpha_2^\vee(t^2)\alpha_3^\vee(t^2)\alpha_4^\vee(t^3)\alpha_5^\vee(t^2)\alpha_6^\vee(t):t\in F^\times\}.$$ Fix the following identification:
$$\alpha_1^\vee(t)\mapsto\mathrm{diag}\{t, t^{-1},1,1,1,1\}$$
$$\alpha_3^\vee(t)\mapsto\mathrm{diag}\{1,t, t^{-1},1,1,1\}$$
$$\alpha_4^\vee(t)\mapsto\mathrm{diag}\{1,1,t, t^{-1},1,1\}$$
$$\alpha_5^\vee(t)\mapsto\mathrm{diag}\{1,1,1,t, t^{-1},1\}$$
$$\alpha_6^\vee(t)\mapsto\mathrm{diag}\{1,1,1,1,t, t^{-1}\}$$We also
identify $\mathrm{GL}_1\simeq A_M$ by $t\mapsto \gamma_0^\vee(t)$. 
Since $2\alpha_2^\vee=\gamma_0^\vee-(\alpha_1^\vee+2\alpha_3^\vee+3\alpha_4^\vee+2\alpha_5^\vee+\alpha_6^\vee)$, we obtain an identification
$$A_M\times M_D\simeq \mathrm{GL}_1\times \mathrm{SL}_6$$ such that
$$\alpha_2^\vee(t^2)\mapsto (t, \mathrm{diag}\{t^{-1},t^{-1}, t^{-1},t,t,t\})$$
Consequently, $$A_M\cap M_D=\{\gamma_0^\vee(t): t^2=1\}$$ 
\[ M\simeq (\mathrm{GL}_1\times\mathrm{SL}_6)/S,\ \text{where } S=\{(t, tI_6): t^2=1\}.\] 
Define a map
$$\overline{\xi}: A_M\times M_D\rightarrow \mathrm{GL}_1\times \mathrm{GL}_1\times \mathrm{SL}_6 \mapsto \mathrm{GL}_1\times\mathrm{GL}_6$$
$$(\gamma_0^\vee(t), x)\mapsto (t^2, t, x)\mapsto (t^2, tx)$$
Observe that $\overline{\xi}$ factors though $A_M\cap M_D$, hence it induces a map
$$\xi: M\rightarrow \mathrm{GL}_1\times\mathrm{GL}_6$$ such that $\xi(\alpha_2^\vee(t))=(t, \mathrm{diag}\{1,1,1,t,t,t\})$, and we obtain a commutative diagram

\[ \begin{tikzcd}
A_M\times M_D \arrow{r}{\overline{\xi}} \arrow[swap]{d}{} & \mathrm{GL}_1\times \mathrm{GL}_6 \arrow{d}{\mathrm{Id}} \\%
M \arrow{r}{\xi}& \mathrm{GL}_1\times\mathrm{GL}_6
\end{tikzcd}
\]

It is particularly interesting to note that the image of $M$ under $\xi$ lies in the L-monoid for the exterior cube representation of $\mathrm{GL}_6$
$$\mathcal{M}^{\wedge^3}:=\{(a,g)\in \mathbb{A}^1\times \mathrm{Mat}_{6\times 6}: a^3=\det g\}.$$

For the general construction of L-monoids, we refer to \cite{Sha17}, section 7. It follows from the construction that the L-monoid comes with a generalized determinant map $\nu:\mathrm{GL}_1\times \mathrm{GL}_6\rightarrow \mathbb{G}_m$, such that $\nu(a,g)=a$.

\begin{lem}
    The following diagram commutes: 
    \[
  \begin{tikzcd}
    M \arrow{r}{\xi} \arrow[swap]{dr}{\gamma_0} & \mathrm{GL}_1\times \mathrm{GL}_6 \arrow{d}{\nu} \\
     & \mathbb{G}_m
  \end{tikzcd}
\]
\end{lem}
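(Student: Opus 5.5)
The plan is to compare the two algebraic characters $\nu\circ\xi$ and $\gamma_0$ of $M$ on a maximal torus. Here $\gamma_0=(122321)$ is viewed as a rational character of $M$: it is the character through which $M$ acts on the one-dimensional center of the Heisenberg group $N$. Both $\nu\circ\xi$ and $\gamma_0$ are morphisms $M\to\mathbb{G}_m$. The first step is a reduction: an algebraic character of the connected reductive group $M$ is trivial on the derived group $M_D\simeq\mathrm{SL}_6$, which is perfect, and on the unipotent radical of a Borel of $M$. Since $M=A\cdot M_D$, such a character is determined by its restriction to the maximal torus $A$. So it suffices to prove $\nu(\xi(a))=\gamma_0(a)$ for $a\in A$.

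Because $G=E_6^{\mathrm{sc}}$ is simply connected, the simple coroots give an isomorphism $\prod_{i=1}^6\mathbb{G}_m\simeq A$, $(t_i)\mapsto\prod_i\alpha_i^\vee(t_i)$. Hence it is enough to check the identity on each one-parameter subgroup $\alpha_i^\vee(t)$. The two sides are computed as follows.

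\begin{itemize}
\item \emph{The side $\nu\circ\xi$.} For $i\neq 2$, $\alpha_i^\vee(t)$ lies in $M_D$. By the definition of $\overline{\xi}$, with $t=1$ in the $A_M$-slot, it maps to $(1,x)$ with $x\in\mathrm{SL}_6$, so $\nu\circ\xi(\alpha_i^\vee(t))=1$. For $i=2$, the explicit formula $\xi(\alpha_2^\vee(t))=(t,\mathrm{diag}\{1,1,1,t,t,t\})$ gives $\nu\circ\xi(\alpha_2^\vee(t))=t$.
\item \emph{The side $\gamma_0$.} We have $\gamma_0(\alpha_i^\vee(t))=t^{\langle\gamma_0,\alpha_i^\vee\rangle}$. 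The highest root of $E_6$ is the fundamental weight $\varpi_2$ (it is orthogonal to every simple root except $\alpha_2$), so $\langle\gamma_0,\alpha_i^\vee\rangle=\delta_{i2}$. This can be checked directly from the expression $\gamma_0=e_1/2+\cdots$ in the Bourbaki coordinates of \S\ref{RSE6}, or from the Cartan matrix applied to $(1,2,2,3,2,1)$.
\end{itemize}

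The two computations agree on every $\alpha_i^\vee(t)$, which proves the commutativity.

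As a consistency check, we also compare the two sides on $A_M$. We have $\nu\circ\xi(\gamma_0^\vee(t))=t^2$, since $\overline{\xi}(\gamma_0^\vee(t),1)=(t^2,tI_6)$. On the other side, $\gamma_0(\gamma_0^\vee(t))=t^{\langle\gamma_0,\gamma_0^\vee\rangle}=t^2$. This agrees with the coroot computation via $\gamma_0^\vee=\alpha_1^\vee+2\alpha_2^\vee+2\alpha_3^\vee+3\alpha_4^\vee+2\alpha_5^\vee+\alpha_6^\vee$.

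The only point needing care is bookkeeping rather than a real obstacle. One must use the formula for $\xi(\alpha_2^\vee(t))$ as derived from the identification $\alpha_2^\vee(t^2)\mapsto(t,\mathrm{diag}\{t^{-1},t^{-1},t^{-1},t,t,t\})$ and the factorization through $S$. One must also confirm that the pairing $\langle\gamma_0,\alpha_2^\vee\rangle$ equals $1$ and not $2$ under the normalization of the coroots $\alpha_i^\vee$.
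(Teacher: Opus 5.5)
Your proof is correct and follows essentially the same route as the paper: characters of $M$ are trivial on $M_D$, so the claim reduces to comparing two characters on a torus. The paper evaluates only on the central torus $A_M=\gamma_0^\vee(\mathbb{G}_m)$, which surjects onto $M/M_D$, so its entire proof is your ``consistency check'' $\nu\circ\xi(\gamma_0^\vee(t))=t^2=\gamma_0(\gamma_0^\vee(t))$; your coroot-by-coroot check on all of $A$, using $\langle\gamma_0,\alpha_i^\vee\rangle=\delta_{i2}$ and the formula for $\xi(\alpha_2^\vee(t))$, is an equally valid, slightly longer variant.
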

\begin{proof}
Note that both $\gamma_0$  and $\nu\circ \xi$
are morphisms of algebraic groups and factor through
\[
M/M_{D}\simeq A_M/S.
\]
As a consequence, it suffices to check that they coincide on $(\gamma_0^\vee(t), 1)\in M$.
Observe that 
\[
\nu(\xi(\gamma_0^\vee(t)))=\nu((t^2, t))=t^2
\]
and $\gamma_0(\gamma_0^\vee(t))=t^{\langle \gamma_0^\vee,\gamma_0\rangle}=t^2$.

\end{proof}
Set $w_0=w_Gw_M^{-1}$, then $w_0(\theta)=\theta$ and $w_0(\alpha_2)<0$. It follows that $P=MN$ is self-associate.  For $i=1,2\cdots, 6$, denote by $s_i=s_{\alpha_i}$, the simple reflection associated to $\alpha_i$. Then $w_G$ and $w_M$ admit the following reduced decomposition: 
\[w_G=
s_1  s_2  s_3  s_1  s_4  s_2  s_3  s_1  s_4  s_3  s_5  s_4  s_2  s_3  s_1  s_4  s_3  s_5  s_4  s_2  s_6  s_5  s_4  s_2  s_3  s_1  s_4  s_3  s_5  s_4  s_2  s_6  s_5  s_4  s_3  s_1
\]
 \[w_M=
 s_1  s_3  s_1  s_4  s_3  s_1  s_5  s_4  s_3  s_1  s_6  s_5  s_4  s_3  s_1
 \]
As a result, the relative long Weyl group element $w_0=w_G\cdot w_M^{-1}$ has the following decomposition:
\begin{equation}\label{eqn: simple-reflection-w0}
w_0=
s_{2}  s_{4}  s_{3}  s_{1}  s_{5}  s_{4}  s_{2}  s_{3}  s_{4}  s_{5}  s_{6}  s_{5}  s_{4}  s_{2}  s_{3}  s_{1}  s_{4}  s_{3}  s_{5}  s_{4} s_{2}
\end{equation}

According to Remark 5.1.3 of \cite{Sha10}, it is important to choose the representative $\dot{s}_i:=\xi_i(\begin{pmatrix}
     & 1\\
     -1 & 
\end{pmatrix})$ for each simple root $\alpha_i$, where $\xi_i: \mathrm{SL}_2\rightarrow G$ is the homomorphism determined by the root $\alpha_i$ upon a fixed vector $0\neq X_{\alpha_i}\in\mathfrak{u}_{\alpha_i}$, $\mathfrak{u}_{\alpha_i}=\mathrm{Lie}(U_{\alpha_i})$, as in the Jacobson-Morosov theorem. Let $\dot{w}_G$, $\dot{w}_M$, and $\dot{w}_0$ be chosen as the product of the $\dot{s}_i$'s in their reduced decompositions. Then $\dot{w}_0=\dot{w}_G\dot{w}_M^{-1}$.
One checks that 
\begin{lem}\label{w_0-action-on-roots}$\dot{w}_0(\alpha_i)=\alpha_i$ for all $i\neq 2$, \[\dot{w}_0(\alpha_2)=-(112321)=-(\alpha_1+\alpha_2+2\alpha_3+3\alpha_4+2\alpha_5+\alpha_6), \mathrm{\ and\ }\dot{w}_0 x_{\alpha_i}(a_i)\dot{w}_0^{-1}=x_{\alpha_i}(a_i),\  \forall a_i\in \mathbb{G}_a.\]
\end{lem}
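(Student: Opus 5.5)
To prove Lemma \ref{w_0-action-on-roots}, I would split it into the root-level statement, which is a finite combinatorial check in the Weyl group, and the group-level statement, which needs sign control for the Chevalley representatives $\dot{s}_i$.

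\textbf{Action on roots.} I would use $w_0=w_Gw_M^{-1}=w_Gw_M$. The element $w_M$ is the longest element of the $A_5$ subsystem $\theta=\{\alpha_1,\alpha_3,\alpha_4,\alpha_5,\alpha_6\}$. It acts on $\theta$ by the diagram involution $\alpha_1\leftrightarrow\alpha_6$, $\alpha_3\leftrightarrow\alpha_5$, $\alpha_4\mapsto\alpha_4$, composed with $-1$. In $E_6$, $w_G=-\iota$, where $\iota$ is the diagram automorphism of $E_6$; it swaps $\alpha_1\leftrightarrow\alpha_6$ and $\alpha_3\leftrightarrow\alpha_5$ and fixes $\alpha_2,\alpha_4$. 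Composing these, $w_0(\alpha_i)=\alpha_i$ for every $i\neq 2$.

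For $\alpha_2$, I would compute $w_M(\alpha_2)$, which is the unique root in $\Phi_N$ of the form $\alpha_2+\sum_{\theta}$ that is highest with respect to $M$. This root is $(112321)$. Then $w_G$ sends it to $-\iota(112321)=-(112321)$.

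As a cross-check, I would note that $w_0(\alpha_2)$ must be negative and orthogonal-compatible. It is also determined by the facts that $w_0$ preserves $\Phi_N\to\Phi_N^-$ and fixes $\theta$. So $-w_0(\alpha_2)$ is the unique $\theta$-orthogonal-type root obtained as $\gamma-\alpha_2=(112321)$, which matches $E_0^*$. If needed, one can also verify this directly from the reduced word \eqref{eqn: simple-reflection-w0} by applying the 21 reflections.

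\textbf{Action on root subgroups.} I would write $\dot{w}_0=\dot{w}_G\dot{w}_M^{-1}$ and use Tits' formula $\dot{s}_j x_\beta(a)\dot{s}_j^{-1}=x_{s_j\beta}(\epsilon_{j,\beta}a)$ with signs $\epsilon=\pm1$. The root-level computation already gives $\dot{w}_0x_{\alpha_i}(a)\dot{w}_0^{-1}=x_{\alpha_i}(\epsilon_i a)$ with $\epsilon_i=\pm1$, so the whole task is to show $\epsilon_i=1$.

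The cleanest route is Shahidi's compatibility (Remark 5.1.3 of \cite{Sha10}, via Lemma 2.1 of \cite{CST17}-style arguments). Products of the $\dot{s}_i$ along reduced words for $w_G$ and $w_M$ satisfy $\dot{w}_Gx_{\alpha}(a)\dot{w}_G^{-1}=x_{-\iota\alpha}(-a)$ for simple $\alpha$, and likewise $\dot{w}_M x_{\alpha}(a)\dot{w}_M^{-1}=x_{-\iota_M\alpha}(-a)$ for $\alpha\in\theta$. This is the standard fact that the Tits lift of the longest element sends a simple root vector $X_\alpha$ to $-X_{-\iota\alpha}$.

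Composing the two, the map $x_{\alpha_i}(a)\mapsto x_{\iota_M\alpha_i}(\,\cdot\,)$ for $w_M^{-1}$ followed by the map for $w_G$ gives sign $(-1)(-1)=1$, since $\iota$ and $\iota_M$ agree on $\theta$. Hence $\epsilon_i=1$.

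The hard step is justifying this sign fact in our normalization $\dot{s}_i=\xi_i\begin{pmatrix}0&1\\-1&0\end{pmatrix}$. I would either cite it (it is used in \cite{Sha10} to make $\dot{w}_0$ $\psi$-compatible), or verify it by an explicit computation with the 21-letter word in a Chevalley basis, for example by machine, as the authors do elsewhere with Magma.
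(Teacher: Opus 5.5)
Your proposal is correct, and it takes a genuinely different route from the paper. The paper gives no argument beyond ``one checks that'': it treats the lemma as a direct verification, running the 21 reflections of the reduced word for $w_0$ over the simple roots and tracking the signs coming from the $\dot s_i$. This is the kind of finite computation the authors hand to Magma elsewhere.

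You instead derive the lemma structurally.
\begin{itemize}
\item On roots, you use $w_G=-\iota$, $w_M|_\theta=-\iota_M$ and $\iota|_\theta=\iota_M$. You also identify $w_M(\alpha_2)$ as the highest weight $(112321)$ of the minuscule $M$-module spanned by the roots of $\Phi_N$ with $\alpha_2$-coefficient $1$.
\item On root subgroups, you use the sign rule for Tits lifts of longest elements.
\end{itemize}
The paper's direct check is self-contained but opaque and tied to the chosen word. Your argument explains why $\dot w_0$ fixes the pinning of $M$, and it works for any maximal parabolic with $\iota|_\theta=\iota_M$. The price is citing a standard fact about Tits lifts.

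Three bookkeeping corrections.
\begin{itemize}
\item In the composition step, $\dot w_M^{-1}$ sends $x_{\alpha_i}(a)$ to $x_{-\iota_M\alpha_i}(-a)$, which lies on a \emph{negative} simple root. So you also need the rule for $\dot w_M^{-1}$ and for negative simple roots, namely $\dot w_G x_{-\beta}(b)\dot w_G^{-1}=x_{\iota\beta}(-b)$. Both follow from the positive-root rule, because $\mathrm{Ad}(\dot w)$ preserves $[X_\beta,X_{-\beta}]=H_\beta$ and sends $H_\beta$ to $H_{w\beta}$.
\item There is a shorter route for the signs. Since $\ell(w_G)=\ell(w_0)+\ell(w_M)$ (that is, $36=21+15$), $\dot w_0$ is itself the Tits lift of $w_0$. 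The standard fact that $\dot w x_{\alpha_i}(a)\dot w^{-1}=x_{\alpha_j}(a)$ whenever $w\alpha_i=\alpha_j$ with both roots simple then gives the sign $+1$ in one step.
\item Your ``cross-check'' is mis-stated: $(112321)$ is not orthogonal to $\theta$, since it pairs to $1$ with $\alpha_4^\vee$. The correct uniqueness argument is as follows. Because $w_0$ fixes $\theta$ pointwise, the root $-w_0(\alpha_2)\in\Phi_N$ has the same pairings with $\theta^\vee$ as $-\alpha_2$. It is therefore $\theta$-dominant, and it differs from $\gamma_0$ because it pairs nontrivially with $\alpha_4^\vee$. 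The only such root is the highest weight $(112321)$.
\end{itemize}
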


We also need the following lemma:
\begin{lem}{\label{gamma_0(w_M)}}
We have $\gamma_0(\dot w_M)=1$.
\end{lem}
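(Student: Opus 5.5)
Since $\dot w_M$ lies in $M_D\simeq \mathrm{SL}_6$ and $\gamma_0$ is a character of $M$, the cleanest route is to show that $\gamma_0$ is trivial on $M_D$. Then $\gamma_0(\dot w_M)=1$ follows immediately, with no need to track the representative explicitly.

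First, I will check that $\dot w_M\in M_D$. Each representative $\dot s_i=\xi_i\begin{pmatrix} & 1\\ -1 & \end{pmatrix}$ for $i\neq 2$ lies in the image of $\xi_i:\mathrm{SL}_2\to G$. This image is generated by the root subgroups $U_{\pm\alpha_i}$, and those lie in $M_D$ because $\alpha_i\in\theta$. The representative $\dot w_M$ is the product of such $\dot s_i$ in the reduced word displayed above, which involves only $s_1,s_3,s_4,s_5,s_6$. Hence $\dot w_M\in M_D$.

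Next, I will show $\gamma_0|_{M_D}=1$. This is the same observation used in the proof of the preceding lemma: $\gamma_0$ extends to a character of $M$, because $\langle\gamma_0,\alpha_i^\vee\rangle=0$ for all $i\neq 2$. Here $\gamma_0=(122321)$ is the highest root, and it is orthogonal to every simple root except $\alpha_2$ in $E_6$. A rational character of $M$ factors through $M/M_D$, since $M_D$ is perfect ($\mathrm{SL}_6$ has no nontrivial characters). Alternatively, one can argue directly: $M_D$ is generated by the $U_{\pm\alpha_i}$ with $i\neq 2$, and $\gamma_0$ kills unipotent elements. Therefore $\gamma_0(\dot w_M)=1$.

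As a sanity check, one can also compute directly. Writing $\dot s_i^2=\alpha_i^\vee(-1)$ and using that $\gamma_0(\alpha_i^\vee(-1))=(-1)^{\langle\gamma_0,\alpha_i^\vee\rangle}=1$ for $i\neq 2$ gives consistent signs. The only potential subtlety, and the main obstacle if one worries about it, is making sense of "$\gamma_0(\dot w_M)$" as the value of the character $\gamma_0$ of $M$ (as in the preceding lemma), rather than as an action on roots. With that reading, the argument above is complete.
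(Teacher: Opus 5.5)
Your proof is correct, but it takes a different route from the paper.

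The paper lifts $\dot w_M$ to some $(\epsilon,\tilde w_M)\in A_M\times M_D$ and uses the generalized determinant to write $\gamma_0(\dot w_M)=\nu(\overline{\xi}(\epsilon,\tilde w_M))=\epsilon^2$. It then invokes the identity $\dot w_M^2=\alpha_1^\vee(-1)\alpha_4^\vee(-1)\alpha_6^\vee(-1)$, read off from the reduced word, to conclude $\epsilon^2=1$.

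You bypass both the lift and the computation. Each $\dot s_i$ with $i\neq 2$ lies in $\xi_i(\mathrm{SL}_2)=\langle U_{\alpha_i},U_{-\alpha_i}\rangle\subset M_D$, so their product $\dot w_M$ lies in $M_D$. And $\gamma_0$, being a rational character of $M$, is trivial on $M_D$.

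Besides being shorter and independent of the particular reduced word, your argument also settles the sign cleanly. The squaring step by itself only gives $\gamma_0(\dot w_M)^2=\gamma_0(\dot w_M^2)=1$. Indeed, $\alpha_1^\vee(-1)\alpha_4^\vee(-1)\alpha_6^\vee(-1)=\gamma_0^\vee(-1)$ is the nontrivial element of $A_M\cap M_D$ (it is $-I_6$ in $\mathrm{SL}_6$). So the $A_M$-component of a lift of $\dot w_M^2$ is only determined modulo $A_M\cap M_D$, and reading off $\epsilon^2=1$ rather than $\pm1$ requires that the lift of $\dot w_M$ can be taken with $\epsilon\in A_M\cap M_D$. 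That is precisely your observation $\dot w_M\in M_D$.

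What the paper's formulation buys is the explicit link with $\nu\circ\xi=\gamma_0$ from the preceding lemma, which is the theme of that subsection. As a proof of $\gamma_0(\dot w_M)=1$, your structural argument is the cleaner one.
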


\begin{proof} Let $(\epsilon,\tilde{w}_M)\in A_M\times M_D$ be any lift of $\dot{w}_M$, then  $\gamma_0(\dot{w}_M)=\nu(\overline{\xi}(\epsilon,\widetilde{w}_M))=\epsilon^2$. Note that $\epsilon^2$ is also the $A_M$-component of the corresponding lift of $\dot{w}_M^2$. On the other hand, from the reduced decomposition of $\dot{w}_M$, we have $\dot{w}_M^2=\alpha_1^\vee(-1)\alpha_4^\vee(-1)\alpha_6^\vee(-1)$. We obtain that $\epsilon^2=1$.
    
\end{proof}
Next, let us explain how to obtain the local $\gamma$-factor for the exterior cube representation of $\mathrm{GL}_6$ using the $(G,M)$-pair in $E_6$ via the local coefficients in the context of the Langlands-Shahidi method. Fix a non-trivial additive character $\psi$ of $F$. Together with the splitting given by $x_\beta: \mathbb{G}_a\rightarrow U_\beta,\ \beta\in \Delta$, it defines a generic character of $U$, still denoted as $\psi$. By Lemma \ref{w_0-action-on-roots}, $\psi$ is compatible with $\dot{w}_0$ in the sense that $\psi(\dot{w}_0u\dot{w}_0^{-1})=\psi(u)$ for all $u\in U_M$. Let $\pi$ be a supercuspidal (hence $\psi$-generic) representation of $\mathrm{GL}_6(F)$ with central character $\omega_\pi$. By tensoring with the trivial character on the $\mathrm{GL}_1$-component and pull back along $\xi$, we obtain a representation $$\sigma:=\xi^*(1\otimes \pi)$$ of $M(F)$. Consequently, $\omega_\sigma=\omega_\pi\otimes 1$. The adjoint action $\mathrm{Ad}: {^LM}\rightarrow \mathrm{GL}({^L\mathfrak{n}})$ breaks up into two irreducible components $r_1\simeq \wedge^3\mathbb{C}^6$ and $r_2$ is the one-dimensional representation given by the highest root vector $\gamma$. By Theorem 8.3.2 of \cite{Sha10}, and section 2.5.3 of \cite{Kim2005OnLL}, we have
$$C_\psi(s,\sigma)=\gamma(s, \pi^\vee, \wedge^3,\psi^{-1})\gamma(2s,\omega_{\pi^\vee},\psi^{-1})$$
where $\pi^\vee$ is the contragredient of $\pi$ and $C_\psi(s,\sigma)$ is the local coefficient.



\subsubsection{$U_M$-orbits on $N'$} When the parabolic subgroup $P=MN$ is self-associate, the local coefficient can be written as the Mellin transform of certain partial Bessel functions by Theorem 6.2 of \cite{Sha02}. The space of integration is the geometric quotient $U_M\backslash N'$ where $U_M=U\cap M$ and $N'\subset N$ is some Zariski open dense subset, with $U_M$ acting on $N$ by conjugation. We begin by studying the structure of the geometric quotient $U_M\backslash N'$. 
\begin{thm}\label{orbit-space-representative}
Let
\[
\gamma_0=(122321),\quad \gamma_1=(112321),\quad
\gamma_2=(112221),\quad \gamma_3=(111211),\quad
\gamma_4=(011210),\quad \gamma_5=(010000),
\]
and set
\[
S=\left\{\dot n(c)=\prod_{i=0}^{5}x_{\gamma_i}(c_i):
(c_0,\ldots,c_5)\in \mathbb{A}^6\right\}\subset N,
\]
where the product is taken in the displayed order. There exists a non-empty
$F$-rational Zariski open subset $S_0\subset S$ and a saturated Zariski open subset $N'\subset N$ such that the action morphism
\[
\mu:U_M\times S_0\longrightarrow N',
\qquad (u,s)\longmapsto usu^{-1},
\]
is an isomorphism of $F$-varieties. Consequently $U_M\backslash N'\simeq S_0$
is a geometric quotient, $U_M$ acts freely on $N'$, and for every field
extension $E/F$ every $U_M(E)$-orbit in $N'(E)$ contains a unique point of
$S_0(E)$. In particular, the $F$-rational orbits in $N'(F)$ are represented
uniquely by the points
\[
\prod_{i=0}^{5}x_{\gamma_i}(c_i),\qquad c_i\in F^\times.
\]
The open subset may be further shrunk without changing these conclusions.
\end{thm}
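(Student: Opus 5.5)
The plan is to reduce to the linear action of $U_M$ on $V=N/Z(N)$, invoke Brion's description of the invariants there, and then build an explicit inverse to $\mu$.

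\textbf{Step 1: reduction to a linear action.} Use the Heisenberg structure $N\simeq H(V)$ with $V=V_0\oplus V_0^*$ and centre $Z(N)=U_{\gamma_0}$. Conjugation by $U_M$ acts on $N$ by automorphisms preserving the centre. On $N/Z(N)\simeq V$ it acts linearly, and via $M_D\simeq\mathrm{SL}_6$ this is the restriction to the upper triangular unipotent group $U_6\simeq U_M$ of $\wedge^3F^6$ (a $20$-dimensional representation). On the central coordinate the action is translation by a term which is bilinear in $u$ and the $V$-component, coming from the symplectic form. The dimensions fit: $\dim N-\dim U_M=21-15=6=\dim S$. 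So the statement amounts to showing that a generic $U_M$-orbit is free, and that $S$ is a rational cross-section.

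\textbf{Step 2: invariants.} By Brion's computation of the $U_6$-invariants of $\wedge^3F^6$, there are explicit invariants $f_1,\ldots,f_5$ on $V$. These are the highest weight vectors of the covariants: the coordinate on the lowest root space $(010000)$, suitable minors/Pfaffian-type expressions in the coordinates, and the degree-four $\mathrm{SL}_6$-invariant. To these I add one invariant $f_0$ on $N$ of the form $z+Q(v)$, where $z$ is the central coordinate and $Q$ is a rational function on $V$. The correction $Q$ is chosen so that $f_0$ absorbs the symplectic translation on the centre; it can be built using the cocycle together with a rational $U_6$-equivariant trivialization defined where $f_1\neq0$. I then restrict $f_0,\ldots,f_5$ to $S$ and check that $f(\dot n(c))$ is triangular in $c$, i.e. of the form $f_i=c_i\cdot(\text{monomial in } c_{i+1},\ldots,c_5)+(\text{terms in } c_{i+1},\ldots,c_5)$. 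Hence $F:=f|_S:S\to\mathbb{A}^6$ is birational, and an isomorphism onto its image over an open set $S_0$ defined by nonvanishing of $c_3,c_4,c_5$ and the relevant monomials.

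\textbf{Step 3: inverting $\mu$.} Set $N'=f^{-1}(F(S_0))$, which is saturated because $f$ is invariant. I will construct the inverse of $\mu$ as $n\mapsto (u(n),\,F^{-1}(f(n)))$, with $u(n)\in U_M$ given by regular functions on $N'$. To produce $u(n)$, I filter $U_M$ by root height in $\Phi^+_M$ and kill, one root group at a time, the coordinates of $n$ along the roots of $\Phi_N$ outside $\{\gamma_0,\ldots,\gamma_5\}$. At each stage the required parameter solves a linear equation whose coefficient is one of the $c_i$ (or a product of them). This is exactly the slice computation, and it is where the nonvanishing conditions defining $S_0$ come from.

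Uniqueness is the statement that $U_M$ acts freely on $N'$, so that the stabilizer of a point of $S_0$ is trivial. I plan to check this on the Lie algebra, by showing that $\mathfrak u_M\to T_sN/T_sS$ is injective for $s\in S_0$. Together with the equality of dimensions, this makes $\mu$ injective and étale. Since $\mu$ is bijective onto the normal variety $N'$ on $\bar F$-points, it is an isomorphism by Zariski's main theorem. Alternatively, the explicit inverse from the peeling argument gives this directly. Because $\mu$ is an isomorphism of $F$-varieties, the statements over every extension $E/F$ follow, in particular that each $F$-rational orbit contains a unique point $\dot n(c)$ with $c_i\in F^\times$. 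Shrinking $S_0$ to a smaller $U_M$-stable condition preserves all of these conclusions.

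\textbf{Main obstacle.} I expect the hard part to be Step 3, together with the choice of $f_0$. One has to verify that the successive linear equations really have invertible coefficients on a common open set, and that the chosen six root groups are transversal to the orbit. Commutator signs in $E_6$ make this delicate. My first approach is to do the tangent-space transversality check at a single convenient point, such as $c_i=1$, which by itself only gives a nonempty open set, and then to use the triangularity of $F$ to identify $S_0$ explicitly. If the hand computation gets unwieldy, I would check the triangular formulas for the invariants by computer algebra (Magma).
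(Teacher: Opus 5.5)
Your outline is the paper's: pass to $V=N/Z(N)\simeq\wedge^3F^6$, restrict Brion's generators to $S$, then solve for $u$ with pivots $c_5$ (nine times), $c_4$ (four times) and $c_3$ (twice), which is where $c_3^2c_4^4c_5^9$ comes from. Two of your intermediate claims are wrong but harmless.

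First, there is no translation on the centre to absorb. In exponential coordinates $\mathrm{Ad}(U_M)$ preserves the grading $\mathfrak n=V\oplus\mathfrak g_{\gamma_0}$ and fixes $\mathfrak g_{\gamma_0}$ pointwise, since $\gamma_0+\delta\notin\Phi$ for $\delta\in\Phi_M^+$. So the exponential central coordinate $z$ is itself invariant. The only correction is the Baker--Campbell--Hausdorff term on $S$, $z|_S=c_0+\lambda c_1c_5$, because $\gamma_1+\gamma_5=\gamma_0$ is the only pair of slice roots summing to $\gamma_0$. No cocycle or equivariant trivialization is needed.

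Second, the restrictions $f_1=c_5$, $f_2=c_4c_5$, $f_3=c_1c_5^2$, $f_4=c_3c_4c_5^2$, $f_5=4c_2c_3c_4c_5+c_1^2c_5^2$ are triangular in the order $c_5,c_4,c_1,c_3,c_2$ (then $c_0$), not in index order. $F|_S$ is invertible on $c_3c_4c_5\neq0$.

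The step that fails as written is the height ordering in Step 3. The paper's proof states its pivot table with the same height order, and the same correction applies there. Solving by increasing height of $\delta$ does not give a triangular system. Take $\beta=(112211)=\gamma_3+\alpha_3$, i.e.\ the $e_{125}$-coordinate, with pivot $c_3$ and $\delta_\beta$ of height one. Up to signs, this coordinate of $u\cdot v$ is $c_3u_{23}+c_4u_{56}(u_{12}u_{23}-u_{13})+c_5\Delta$, where $\Delta$ is the minor of $u$ on rows $\{1,2,5\}$ and columns $\{4,5,6\}$. It involves $u_{(101000)}$ and radical coordinates up to height five; for example, it equals $\pm c_4st\neq0$ at $u=x_{(000001)}(s)x_{(101000)}(t)$.

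What is triangular is the block order. Write $u=u_Pu_L$ with $u_P=\left(\begin{smallmatrix}1&X\\0&1\end{smallmatrix}\right)$ in the radical of the $(3,3)$-parabolic of $\mathrm{SL}_6$ (the nine $c_5$-pivot roots) and $u_L\in U_3\times U_3$ (the other six). Grade $\wedge^3F^6$ by the number of indices in $\{4,5,6\}$. Then $u_L$ preserves the grading and $u_P$ is the identity modulo lower grades, so:
\begin{enumerate}
\item grade three gives $c_5$;
\item the nine grade-two coordinates are $\pm c_5X_{ij}$, independent of $u_L$;
\item grade one reads $u_L\cdot(c_2e_{124}+c_3e_{135}+c_4e_{236})=n_1-c_5\Lambda_2(X)$, where $n_1$ is the grade-one part of $n$ and $\Lambda_2(X)$ is the quadratic grade-one part of $u_Pe_{456}$;
\item grade zero then gives $c_1$, and $z$ gives $c_0$.
\end{enumerate}
The grade-one equation is an upper-diagonal-lower factorization of a $3\times3$ matrix. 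It is solved with pivot $c_4$ for the entries attached to $(101000),(100000),(000011),(000001)$, and then with pivot $c_3$ for $(001000),(000010)$.

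This gives an explicit inverse of $\mu$ over $c_3c_4c_5\neq0$. In particular, it shows that each fibre of $f$ over $F(S_0)$ is a single orbit. Your Zariski-main-theorem route needs this fact, and the tangent-space check does not supply it: it gives only \'etaleness, and injectivity once combined with the invariants.
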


\begin{proof}
Since $F$ has characteristic zero, the exponential map gives
an $F$-variety isomorphism from the Lie algebra of every unipotent group
appearing below to the group itself. We use it only to identify the action:
the center of $N$ is the root subgroup $U_{\gamma_0}$, and
\[
V:=N/Z(N)\simeq \wedge^3F^6
\]
as a $U_M$-module under the identification
$M_{D}\simeq \mathrm{SL}_6$. We normalize the 
coordinates by
\[
X_{\gamma_1}=e_{123},\qquad X_{\gamma_2}=e_{124},\qquad
X_{\gamma_3}=e_{135},\qquad X_{\gamma_4}=e_{236},\qquad
X_{\gamma_5}=e_{456}.
\]

Let $x_{ijk}$ denote the coordinate function dual to $e_{ijk}$ on
$\wedge^3F^6$. Let $D_r$ be the  operator which sends $e_{r+1}$ to $e_r$
($1\le r\le5$), then on coordinate functions
\[
D_r(x_I)=
\begin{cases}
\epsilon(I,r)\,x_{(I\setminus\{r\})\cup\{r+1\}},
& r\in I,\ r+1\notin I,\\
0,&\text{otherwise},
\end{cases}
\]
where $I=\{i<j<k\}$ and $\epsilon(I,r)=\pm1$ is the sign needed to reorder
the wedge. Thus
\[
F[V]^{U_M}=\bigcap_{r=1}^5\ker D_r .
\]
Brion's computation \cite[Theorem 3 and Table on Page 13]{Brion83} says that
this invariant algebra is polynomial:
\[
F[V]^{U_M}=F[f_1,f_2,f_3,f_4,f_5],
\]
with degrees and $\mathrm{SL}_6$-highest weights
\[
(1,\omega_3),\quad (2,\omega_1+\omega_5),\quad
(3,\omega_3),\quad (4,\omega_2+\omega_4),\quad (4,0).
\]
Although Brion states the result over $\mathbb C$, the same generators and
relations are valid over $F$.

Let $\iota:S\hookrightarrow N$ denote the inclusion. For the
functions $f_i$ on $V=N/Z(N)$, $\iota^*f_i$ is the pullback along the
composition $S\hookrightarrow N\twoheadrightarrow V$.

We also record the finite calculation which fixes $\iota^*f_i$.
Starting with a general homogeneous polynomial of the indicated degree and
weight and imposing $D_r(f)=0$ for $1\le r\le5$ gives the following
normalization of Brion's generators. When restricted to the affine subspace
with coordinates
\[
x_{123}=c_1,\quad x_{124}=c_2,\quad x_{135}=c_3,\quad
x_{236}=c_4,\quad x_{456}=c_5,
\]
all other $x_{ijk}$ being zero, their restrictions are
\begin{align}
\iota^*f_1 &= c_5,\label{eq:slice-f1}\\
\iota^*f_2 &= c_4c_5,\label{eq:slice-f2}\\
\iota^*f_3 &= c_1c_5^2,\label{eq:slice-f3}\\
\iota^*f_4 &= c_3c_4c_5^2,\label{eq:slice-f4}\\
\iota^*f_5 &= 4c_2c_3c_4c_5+c_1^2c_5^2.\label{eq:slice-f5}
\end{align}
For example, in degree four and weight zero the restriction to this subspace
can only involve the two monomials $c_2c_3c_4c_5$ and $c_1^2c_5^2$, and the
kernel calculation above gives the displayed coefficients. 

Let $z$ be the linear coordinate on the central root space
$\mathfrak g_{\gamma_0}$ in exponential coordinates. Since $U_M$ fixes the
highest root vector $X_{\gamma_0}$, it acts trivially on this coordinate, and
\[
F[N]^{U_M}=F[z]\otimes_F F[V]^{U_M}=F[z,f_1,\ldots,f_5].
\]
In the ordered root-subgroup coordinates used in the statement, the
exponential central coordinate differs from the displayed $x_{\gamma_0}$-
coordinate only by the central Baker--Campbell--Hausdorff term coming from
$x_{\gamma_1}$ and $x_{\gamma_5}$; hence, on $S$,
\[
\iota^*z=c_0+\lambda c_1c_5
\]
for a constant $\lambda\in F$ depending only on the fixed Chevalley
normalization and on the chosen ordering.

Let
\[
Q=\operatorname{Spec}F[N]^{U_M},\qquad q:N\longrightarrow Q
\]
be the categorical quotient morphism, and put $Q_0=D(f_1f_2f_4)\subset Q$.
Let $S^\ast\subset S$ be the open subset defined by $c_3c_4c_5\ne0$. By the
displayed restriction formulas, $q|_{S^\ast}:S^\ast\to Q_0$ is an
isomorphism, with inverse
\begin{align*}
c_5 &= f_1, &
c_4 &= \frac{f_2}{f_1}, &
c_1 &= \frac{f_3}{f_1^2},\\
c_3 &= \frac{f_4}{f_1f_2}, &
c_2 &= \frac{f_1^2f_5-f_3^2}{4 f_1f_4}, &
c_0 &= z-\lambda\frac{f_3}{f_1}.
\end{align*}
This proves that the proposed slice maps isomorphically to the open
categorical quotient. It remains to prove that the quotient is geometric on
the corresponding open subset of $N$.

Let
\[
\mathcal B=\Phi_N\setminus\{\gamma_0,\gamma_1,\ldots,\gamma_5\}.
\]
These are the fifteen $N$-root coordinates not present on the slice, and
$15=\dim U_M$. The table below should be read as follows. For each
$\beta\in\mathcal B$ there is a positive root $\delta_\beta$ of $M$ and one
of the roots $\gamma_i$ such that
\[
\beta=\gamma_i+\delta_\beta .
\]
The third column records the corresponding  coordinate $c_i$. As
$\beta$ runs through $\mathcal B$, the roots $\delta_\beta$ run through
$\Phi_M^+$ exactly once:
\[
\begin{array}{c|c|c}
\beta & \delta_\beta & \text{pivot coordinate} \\ \hline
(010100)&(000100)&c_5\\
(011100)&(001100)&c_5\\
(010110)&(000110)&c_5\\
(011110)&(001110)&c_5\\
(111100)&(101100)&c_5\\
(010111)&(000111)&c_5\\
(111110)&(101110)&c_5\\
(011111)&(001111)&c_5\\
(111111)&(101111)&c_5\\
(111221)&(000010)&c_3\\
(112211)&(001000)&c_3\\
(011221)&(000011)&c_4\\
(112210)&(101000)&c_4\\
(011211)&(000001)&c_4\\
(111210)&(100000)&c_4
\end{array}
\]
Order $\Phi_M^+$, and therefore also $\mathcal B$, by increasing height of
$\delta_\beta$, with any fixed order among roots of equal height. Write
\[
u=\prod_{\delta\in\Phi_M^+}^{\rightarrow}x_\delta(u_\delta),
\qquad
s=\prod_{i=0}^5x_{\gamma_i}(c_i),
\]
and let $y_\eta$ be the ordered root-subgroup coordinate on $N$ attached to
$\eta\in\Phi_N$. For the action map
\[
\mu:U_M\times S^\ast\longrightarrow N,\qquad (u,s)\longmapsto usu^{-1},
\]
the Chevalley commutator formula gives, for each $\beta\in\mathcal B$,
\[
\mu^*(y_\beta)
=\varepsilon_\beta p_\beta u_{\delta_\beta}
+P_\beta\bigl(c_0,\ldots,c_5;
u_{\delta'}:\operatorname{ht}(\delta')<\operatorname{ht}(\delta_\beta)\bigr),
\qquad \varepsilon_\beta=\pm1,
\]
where $p_\beta$ is the pivot coordinate from the table. The reason is simple:
the displayed linear term comes from
\[
x_{\delta_\beta}(u_{\delta_\beta})x_{\gamma_i}(c_i)
x_{\delta_\beta}(u_{\delta_\beta})^{-1}
=x_{\gamma_i}(c_i)x_\beta(\varepsilon_\beta c_i u_{\delta_\beta})
\cdot(\text{higher root factors}),
\]
and every other contribution to the same $\beta$-coordinate involves only
earlier $u_{\delta'}$'s in the height order. 
More precisely, if
$\mathcal B=\{\beta_1,\ldots,\beta_{15}\}$ is ordered as above, then the
$j$-th equation has the form
\[
\mu^*(y_{\beta_j})
=\varepsilon_{\beta_j}p_{\beta_j}u_{\delta_{\beta_j}}
+P_{\beta_j}(c_0,\ldots,c_5;
u_{\delta_{\beta_1}},\ldots,u_{\delta_{\beta_{j-1}}}).
\]
In particular, up to sign we have
$
\prod_{j}p_{\beta_j}=c_3^2c_4^4c_5^9,
$
which is invertible on $S^\ast$.

We now solve the orbit problem explicitly. Take $n\in q^{-1}(Q_0)$. Take $n\in q^{-1}(Q_0)$.
By definition, $Q_0=D(f_1f_2f_4)$ is the locus in $Q$ where the product
$f_1f_2f_4$ is nonzero.
 Define
$s(c)=\prod_{i=0}^{5}x_{\gamma_i}(c_i)\in S^*$
by taking
\begin{align*}
c_5 &= f_1(n), &
c_4 &= \frac{f_2(n)}{f_1(n)}, &
c_1 &= \frac{f_3(n)}{f_1(n)^2},\\
c_3 &= \frac{f_4(n)}{f_1(n)f_2(n)}, &
c_2 &= \frac{f_1(n)^2f_5(n)-f_3(n)^2}{4 f_1(n)f_4(n)}, &
c_0 &= z(n)-\lambda\frac{f_3(n)}{f_1(n)} .
\end{align*}
 Substituting
the definitions of the $c_i$ into
\eqref{eq:slice-f1}--\eqref{eq:slice-f5} and into
$\iota^*z=c_0+\lambda c_1c_5$ gives
\[
f_i(s(c))=f_i(n)\quad(1\le i\le5),\qquad z(s(c))=z(n).
\]
Since $Q=\operatorname{Spec}F[z,f_1,\ldots,f_5]$, this is exactly the
statement that $q(s(c))=q(n)$. Conversely, if $s(c')\in S^\ast$ has
$q(s(c'))=q(n)$, the same formulas force $c'=c$ successively. Hence $s(c)$
is the unique point of $S^\ast$ lying over $q(n)$. Then the equations
\[
y_\beta(n)=\mu^*(y_\beta)(u,c),\qquad \beta\in\mathcal B,
\]
determine the coordinates $u_{\delta_\beta}$ recursively and uniquely. Since
the recursion only divides by the pivot coordinates $c_3,c_4,c_5$, the
solution depends regularly on $n$ over $q^{-1}(Q_0)$. Hence
\[
\mu:U_M\times S^\ast\longrightarrow q^{-1}(Q_0)
\]
has a regular inverse and is an isomorphism of $F$-varieties.

\smallskip
Finally set
\[
S_0=S^\ast\cap D(c_0c_1c_2),
\]
or replace it by any smaller non-empty open subset needed later, and define
$N'=q^{-1}(q(S_0))$. The isomorphism above restricts to
\[
\mu:U_M\times S_0\xrightarrow{\sim}N' .
\]
Under this identification the quotient map is the projection
$U_M\times S_0\to S_0$, so $U_M\backslash N'\simeq S_0$ is a geometric
quotient. The same product decomposition also proves freeness: if
$usu^{-1}=s$ with $s\in S_0$, uniqueness of the factorization gives $u=1$.
All constructions are defined over $F$, hence after any field extension
$E/F$ we still have
\[
N'(E)=U_M(E)\cdot S_0(E)
\]
with unique factorization. This gives the asserted unique representatives;
because $S_0\subset D(c_0c_1c_2c_3c_4c_5)$, they have $c_i\in F^\times$.
\end{proof}

\begin{remark}
In the applications below we replace $S_0$ by smaller open subsets by imposing the
additional Bruhat and \'etaleness conditions. We continue to denote the
corresponding saturated open subset of $N$ by $N'$.
\end{remark}


\subsubsection{Invariant measure}Theorem 6.2 of \cite{Sha02} states that the local coefficient is the Mellin transform of certain partial Bessel functions over the p-adic manifold obtained by the $F$-points of the geometric quotient $U_M\backslash N'$. In general, the existence of invariant measures on $U_M\backslash N'$ is known \cite[Theorem 2.51]{Foll16}. But for the proof of stability, an explicit form of the invariant measure is crucial. 
We consider the action 
\[
\Phi: U_M\times F^6\rightarrow N, \quad (u, \dot{n})\mapsto u\dot{n}u^{-1}.
\] where we identify $F^6$ with $\prod_{i=0}^5 U_{\gamma_i}(F)$ through the splitting $x_{\gamma_i}: \mathbb{G}_a\rightarrow U_{\gamma_i}, \ (i=0,\cdots, 5)$.
\begin{prop}\label{Invmeas}Let $dn$(resp. $du$) be the Haar measure on $N(F)$(resp. $U_M(F)$), normalized so that $\mathrm{Vol}(N(\mathcal{O}_F),dn)=1$(resp.  $\mathrm{Vol}(U_M(\mathcal{O}_F),du)=1$ ).
We have 
\[
\Phi^{*}(dn)=J_{\Phi}(c_0,\ldots, c_5)du\wedge \prod_{i=0}^5dc_i
\]
where 
\[
J_{\Phi}(c_0,\ldots, c_5)=|c_3|^2|c_4|^4|c_5|^9,
\]
\end{prop}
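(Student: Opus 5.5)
We compute the Jacobian in the ordered root-subgroup coordinates on $N$, using the triangular structure already exhibited in the proof of Theorem \ref{orbit-space-representative}. In the coordinates $(y_\eta)_{\eta\in\Phi_N}$, $dn=\prod_{\eta}dy_\eta$ up to a constant, and the normalization $\mathrm{Vol}(N(\mathcal O_F))=1$ makes this constant $1$, since the root-coordinate map is an $\mathcal O_F$-isomorphism for the Chevalley group. Likewise $du=\prod_{\delta\in\Phi_M^+}du_\delta$ with the same normalization. Hence $\Phi^*(dn)=|\det D\Phi|\,du\wedge\prod dc_i$, and it suffices to show $\det D\Phi=\pm c_3^2c_4^4c_5^9$ as a polynomial identity on $U_M\times S$.

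The key step is to order the target coordinates $y_\eta$ so that the Jacobian matrix becomes block triangular. We take the six slice coordinates $y_{\gamma_0},\dots,y_{\gamma_5}$ and the fifteen coordinates $y_\beta$, $\beta\in\mathcal B$. On the source side we take $(c_0,\dots,c_5)$ and the $u_\delta$, ordered by height of $\delta$ as in the table.

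By the displayed recursion $\mu^*(y_{\beta_j})=\varepsilon p_{\beta_j}u_{\delta_{\beta_j}}+P_{\beta_j}(c;u_{\delta_{\beta_1}},\dots,u_{\delta_{\beta_{j-1}}})$, the $15\times15$ block $\partial y_{\mathcal B}/\partial u$ is triangular with diagonal entries $\pm p_{\beta_j}$. Its determinant is therefore $\pm c_3^2c_4^4c_5^9$, as recorded in that proof.

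The determinant factorizes only if we can also handle the $6$ slice coordinates, and this is the main obstacle. Two routes are available.

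\emph{First route.} Use $U_M$-invariance. Right translation by $U_M$ on the source corresponds to conjugation on $N$, which preserves $dn$ because $U_M$ is unipotent and has modulus character $1$ on $N$. So $J_\Phi$ is independent of $u$, and it suffices to compute $D\Phi$ at $u=1$.

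At $u=1$, $\partial\Phi/\partial c_i$ is the tangent vector of $S$. Since $S$ is a product of root subgroups, in ordered coordinates this gives $\partial y_{\gamma_i}/\partial c_i=1$ and $\partial y_\beta/\partial c_i=0$ for $\beta\in\mathcal B$. The only exceptions are possible correction terms from reordering, for instance the Baker--Campbell--Hausdorff-type term in $\gamma_0$. Those corrections only affect the $y_{\gamma}$-rows, and the $c$-block stays unipotent triangular, so they do not change the determinant.

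The $u$-columns at $u=1$ are the vectors $\mathrm{ad}(X_\delta)$ applied to $s$. Their $\mathcal B$-components give the triangular block above. Their $\gamma_i$-components do not matter once the $c$-block has full rank in the $\gamma$-rows and zero in the $\mathcal B$-rows. This requires checking that $\partial y_\beta/\partial c_i=0$ at $u=1$, which holds because $s\in S$ has vanishing $\mathcal B$-coordinates for all $c$. The full matrix is then block triangular, and
\[
J_\Phi=\bigl|\det(\partial y_\gamma/\partial c)\bigr|\cdot\bigl|\det(\partial y_{\mathcal B}/\partial u)\bigr|=|c_3|^2|c_4|^4|c_5|^9.
\]

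\emph{Fallback route.} If the $u$-invariance argument needs care, for instance with orderings of noncommuting factors, we work at general $u$ using the invariant functions. Change target coordinates from $(y_{\gamma_i})$ to $(z,f_1,\dots,f_5)$, which are $U_M$-invariant, so their derivatives in $u$ vanish. The Jacobian of $(y_\eta)\mapsto(z,f,y_{\mathcal B})$ restricted to $S$ is then computed from \eqref{eq:slice-f1}--\eqref{eq:slice-f5}. The expected difficulty in this route is verifying that this change of variables has unit Jacobian, so we prefer the first argument.
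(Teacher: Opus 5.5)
Your argument is correct, and it determines the exponents by a different route than the paper.

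\textbf{Comparison with the paper.} Both proofs first show that $J_\Phi$ does not depend on $u$. One correction here: it is \emph{left} translation $u\mapsto u'u$ that intertwines with $\mathrm{Ad}(u')$, since $\Phi(u'u,c)=u'\Phi(u,c)u'^{-1}$; right translation does not. After this step the two proofs diverge.
\begin{itemize}
\item The paper uses $A(F)$-equivariance. It asserts that $J_\Phi$ is a monomial in the $|c_i|$ and solves for the exponents from the pairings $\langle\gamma_j,\gamma_i^\vee\rangle$.
\item You evaluate $D\Phi$ at $u=1$, where the matrix is block triangular. The slice block is unipotent, the $\mathcal{B}$-rows vanish on the $c$-columns, and the $\mathcal{B}\times u$ block carries the pivots.
\end{itemize}
The weight method is cheaper, but it cannot determine the answer here on its own. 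Because $\gamma_0=\gamma_1+\gamma_5$, the $c_0$-column of the paper's $6\times 6$ matrix is the sum of its $c_1$- and $c_5$-columns. The system is therefore singular, and every $|c_0|^{t}|c_1c_5|^{-t}|c_3|^2|c_4|^4|c_5|^9$ satisfies it. Semi-invariance also does not force a monomial: for example, $c_0-\lambda c_1c_5$ has the same weight as $c_0$. Your direct computation is what actually pins down $J_\Phi$.

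\textbf{One repair.} Do not derive the $\mathcal{B}\times u$ block from the height recursion in the proof of Theorem \ref{orbit-space-representative}. That recursion fails for general $u$: with $u$ ordered by height, $y_{(111221)}$ contains a term $\pm c_4u_{\alpha_1}u_{(000011)}$, although $\delta_{(111221)}=\alpha_5$ has height one.

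You do not need it. At $u=1$ only first-order terms enter: $\partial y_\beta/\partial u_\delta$ is the $X_\beta$-coefficient of $\sum_i c_i[X_\delta,X_{\gamma_i}]$. A root check shows that each $\delta\in\Phi_M^+$ has exactly one $i$ with $\delta+\gamma_i\in\mathcal{B}$, namely the pairing in the table. Every other sum $\delta+\gamma_i$ that is a root equals $\gamma_1$, a slice root. So at $u=1$ the block is monomial, with entries $\pm p_\beta$ and determinant $\pm c_3^2c_4^4c_5^9$. This completes your first route, and the fallback route is unnecessary.
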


\begin{remark}
The exponents are determined as follows: 
Observe that for any $\gamma\in (E_0\cup E_0^*)\backslash R$, there is a maximal $\gamma_i\in R$ such that 
$\gamma-\gamma_i>0$, i.e., a positive linear combination of simple roots. We write
\[
\varepsilon_i=|\{\gamma\in (E_0\cup E_0^*)\backslash R: \gamma-\gamma_i>0\}|.
\]
Then $J_{\Phi}(c_0,\ldots, c_5)=\prod_{i=0}^5|c_i|^{\varepsilon_i}$. 
\end{remark}

\begin{proof}Let $J_\Phi(u,c)$ be the Jacobian of $\Phi$, and $dc=\prod_{i=0}^5 dc_i$, in which $dc_i$ is the restriction of the Haar measure of $dn$ to the root subgroup $U_{\gamma_i}(F)$. Then
$$\Phi^* dn=du\wedge J_\Phi(u,c)dc$$ We will first show that $J_\Phi(u,c)$ is independent of $u\in U_M$. Denote the map $u\mapsto u'u$ by $L_{u'}$, where $u'\in U_M$. Then we have the following commutative diagram:

\[ \begin{tikzcd}
U_M\times F^6 \arrow{r}{\Phi} \arrow[swap]{d}{L_{u'}\times Id_{F^6}} & N \arrow{d}{Ad(u')} \\%
U_M\times F^6 \arrow{r}{\Phi}& N
\end{tikzcd}
\]
Then $$du\wedge J_\Phi(u,c)dc=\Phi^* dn=\Phi^*Ad(u')^*dn=(L_{u'}\times Id_{F^6})^*\Phi^* dn=L_{u'}^*du\wedge J(u'u,c)dc=du\wedge J(u'u,c)dc$$ since $dn$ is $Ad(u')$-invariant, and $du$ is $L_{u'}$-invariant. Thus we conclude that $$J_\Phi(u,c)=J_\Phi(u'u,c)$$ for all $u'\in U_M$. This shows that $J_\Phi(u,c)$ is independent of $u\in U_M$, and we denote it by $J_\Phi(c)$.

Note that $\Phi$ is $A(F)$-equivariant. As $J_\Phi\in \mathcal{O}(\mathbb{A}^6)$ is a rational function of the $c_i$'s, and $t u_\beta(x)t^{-1}=u_\beta(\beta(t)x)$ for all $\beta \in \Phi_G$, $t\in A(F)$, we see that
$$Ad(t)^* J_\Phi=\vert t\vert^\lambda J_\Phi$$ for some power $\lambda$ depending only on the roots $\gamma_i(i=0,\cdots,5)$. Consequently, $J_\Phi$ is a monomial of $\vert c_0\vert,\cdots, \vert c_5\vert$, i.e. $J_\varphi(c)=\prod_{i=0}^5\vert c_i\vert^{k_i}$, $k_i\in \mathbb{Z}$. 

The left part is to determine the $k_i(i=0,\cdots, 5)$. By the $A(F)$-equivariance of $\Phi$, we have
$$\Phi^* Ad(t)^*dn=Ad(t)^*du\wedge J_\Phi(Ad(t)c) Ad(t)^*dc$$ In particular, replace $t$ by $\gamma_i^\vee(t)$ for $i=0,\cdots, 5$ in the above formula, we obtain a system of linear equations:  
$$\sum_{\beta\in \Phi_N^+}\langle \beta,\gamma_i^\vee\rangle-\sum_{\beta\in \Phi_M^+}\langle\beta, \gamma_i^\vee\rangle-\sum_{j=0}^5\langle\gamma_j, \gamma_i^\vee\rangle( k_j+1)=0$$ where $\Phi_N^+$ and $\Phi_M^+$ are positive roots in $N$ and $M$ respectively. Note that we have
$$2\rho=\sum_{\beta\in \Phi_N^+}\beta=11(\alpha_1+2\alpha_2+2\alpha_3+3\alpha_4+2\alpha_5+\alpha_6)=11\gamma_0,\ \sum_{\beta\in \Phi_M^+}\beta=5\alpha_1+8\alpha_3+9\alpha_4+8\alpha_5+5\alpha_6$$
\begin{itemize}
    \item $\langle\alpha_k, \gamma_0^\vee\rangle=0$ for $k=1,3, 4,5,6 $ and $\langle \alpha_2,\gamma_0^\vee\rangle=1$, thus $\langle\gamma_k,\gamma_0^\vee\rangle=1$ for $i=1,2,3,4,5$ and $\langle \gamma_0,\gamma_0^\vee\rangle=2$; 
    \item $\langle \alpha_k,\gamma_1^\vee\rangle=0$ for $k=1,3,5,6$, $\langle \alpha_2,\gamma_1^\vee\rangle=-1,\langle\alpha_4,\gamma_1^\vee\rangle=1$, thus $\langle \gamma_k, \gamma_1^\vee\rangle=1$ for $k=0,2,3,4$, and $\langle \gamma_1,\gamma_1^\vee\rangle=2$, $\langle \gamma_5,\gamma_1^\vee\rangle=-1$;
    \item $\langle \alpha_k,\gamma_2^\vee\rangle=0$ for $k=1,2,6$, $\langle \alpha_3,\gamma_2^\vee\rangle=1,\langle\alpha_4,\gamma_2^\vee\rangle=-1,\langle\alpha_5,\gamma_2^\vee\rangle=1$, thus $\langle \gamma_k,\gamma_2^\vee\rangle=0$ for $k=3,4,5$, and $\langle\gamma_0,\gamma_2^\vee\rangle=1$,$\langle\gamma_1,\gamma_2^\vee\rangle=1$, $\langle\gamma_2,\gamma_2^\vee\rangle=2$;
    \item $\langle \alpha_k,\gamma_3^\vee\rangle=1$ for $k=1, 4,6$, $\langle\alpha_3,\gamma_3^\vee\rangle=-1$, $\langle\alpha_4,\gamma_3^\vee\rangle=1$, $\langle \alpha_5,\gamma_3^\vee\rangle=-1$, thus
    $\langle \gamma_k, \gamma_3^\vee\rangle=0$ for $k=2,4,5$, and $\langle\gamma_0,\gamma_3^\vee\rangle=1$, $\langle\gamma_1,\gamma_3^\vee\rangle=1$, $\langle\gamma_3,\gamma_3^\vee\rangle=2$;
    \item $\langle \alpha_k,\gamma_4^\vee\rangle=1$ for $k=2,3,5$, $\langle\alpha_1,\gamma_4^\vee\rangle=-1$, $\langle\alpha_4,\gamma_4^\vee\rangle=1$, $\langle \alpha_6,\gamma_4^\vee\rangle=-1$, thus
    $\langle \gamma_k, \gamma_4^\vee\rangle=0$ for $k=2,3,5$, and $\langle\gamma_0,\gamma_4^\vee\rangle=1$, $\langle\gamma_1,\gamma_4^\vee\rangle=1$, $\langle\gamma_4,\gamma_4^\vee\rangle=2$;
    \item $\langle \alpha_k,\gamma_5^\vee\rangle=0$ for $k=1,3,5,6$, $\langle\alpha_2,\gamma_5^\vee\rangle=2$, $\langle\alpha_4,\gamma_5^\vee\rangle=-1$, thus
    $\langle \gamma_k, \gamma_5^\vee\rangle=0$ for $k=2,3,4$, and $\langle\gamma_0,\gamma_5^\vee\rangle=1$, $\langle\gamma_1,\gamma_5^\vee\rangle=-1$, $\langle\gamma_5,\gamma_5^\vee\rangle=2$.  
\end{itemize} 

From the above calculations we obtain the following system of linear equations:
$$22-2(k_0+1)-\sum_{j=1}^5(k_j+1)=0;$$
$$11-9-2(k_1+1)+(k_5+1)-\sum_{j= 0,2,3,4}(k_j+1)=0;$$
$$11-7-(k_0+1)-(k_1+1)-2(k_2+1)=0;$$
$$11-3-(k_0+1)-(k_1+1)-2(k_3+1)=0;$$
$$11-(-1)-(k_0+1)-(k_1+1)-2(k_4+1)=0;$$ 
$$11-(-9)-(k_0+1)+(k_1+1)-2(k_5+1)=0.$$
Let $x_i=k_i+1$, we get 
$$\begin{pmatrix}
    2 & 1 & 1 & 1 & 1 & 1\\
    1 & 2 & 1 & 1 & 1 & -1\\
    1 & 1 & 2 & 0 & 0 & 0\\
    1 & 1 & 0 & 2 & 0 & 0\\
    1 & 1 & 0 & 0 & 2 & 0\\
    1 & -1 & 0 & 0 & 0 & 2
\end{pmatrix}\begin{pmatrix}
    x_0\\
    x_1\\
    x_2\\
    x_3\\
    x_4\\
    x_5
\end{pmatrix}=\begin{pmatrix}
    22\\
    2\\
    4\\
    8\\
    12\\
    20\\
\end{pmatrix}$$ which admits a unique solution $(x_0,x_1,x_2,x_3, x_4,x_5)=(1,1,1,3,5,10)$, hence $$(k_0,k_1,k_2,k_3,k_4,k_5)=(0,0,0,2,4,9),$$ as claimed in the theorem.
\end{proof}

\begin{remark}
The same proof should work for other cases when the action of $U_M$ on $N$ has trivial stabilizers.
\end{remark}

\begin{cor} The space of $F$-points of the orbit space $U_M\backslash N'$, identified with a dense subset of $(F^\times)^6$ through $\Xi$, admits a $U_M$-invariant measure of the form  $$|c_3|^2|c_4|^4|c_5|^9\prod_{i=0}^5dc_i$$
\end{cor}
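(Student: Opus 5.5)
The plan is to deduce the corollary directly from Theorem \ref{orbit-space-representative} and Proposition \ref{Invmeas}, via the product decomposition of $N'$ and uniqueness of quotient measures.

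First, Theorem \ref{orbit-space-representative} gives that $\mu:U_M\times S_0\to N'$ is an isomorphism of $F$-varieties. Passing to $F$-points, $\Xi:=\mu|_{F\text{-points}}$ is a homeomorphism (indeed an analytic isomorphism) $U_M(F)\times S_0(F)\to N'(F)$. Here $S_0(F)$ is a Zariski dense open subset of $(F^\times)^6$ in the coordinates $c_i$. Since $N'$ is Zariski open dense in $N$, its complement has Haar measure zero, so $dn$ restricted to $N'(F)$ is still a Haar measure up to a null set.

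Second, pull back $dn$ along $\Xi$ using Proposition \ref{Invmeas}. This gives
\[
\Xi^*(dn)=|c_3|^2|c_4|^4|c_5|^9\,du\otimes\prod_{i=0}^5dc_i .
\]
Because $\Phi$ and $\Xi$ agree on $U_M(F)\times S_0(F)$, the change of variables formula for $p$-adic analytic isomorphisms applies. For every $f\in C_c(N'(F))$ we therefore get
\[
\int_{N'(F)}f(n)\,dn=\int_{S_0(F)}\Big(\int_{U_M(F)}f(u\dot n(c)u^{-1})\,du\Big)|c_3|^2|c_4|^4|c_5|^9\prod_i dc_i .
\]

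Third, identify this with the quotient integration formula. The inner integral is exactly the function on $U_M(F)\backslash N'(F)\simeq S_0(F)$ obtained by integrating over orbits. Here we use that $U_M$ acts freely and each orbit meets $S_0(F)$ exactly once, which is the unique-representative statement of Theorem \ref{orbit-space-representative}. The measure $d\dot n:=|c_3|^2|c_4|^4|c_5|^9\prod dc_i$ therefore satisfies the defining property $dn=du\,d\dot n$ of the invariant quotient measure. Both $dn$ and $du$ are invariant, since $dn$ is invariant under $\mathrm{Ad}(U_M)$ and $U_M$ is unimodular. By the uniqueness in \cite[Theorem 2.51]{Foll16}, $d\dot n$ is the invariant measure on $U_M\backslash N'$, up to the normalizations fixed in Proposition \ref{Invmeas}.

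The only point needing care is the first step. One must check that the algebraic isomorphism gives a measure-theoretic change of variables on $F$-points, and that the shrinking of $S_0$ (and hence $N'$) discards only sets of measure zero. Both hold because we remove proper Zariski closed subsets, which are null for Haar measure on $F$-analytic manifolds. Since the Jacobian computation has already been done in Proposition \ref{Invmeas}, this step is expected to be routine.
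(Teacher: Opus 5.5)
Your argument is correct and is essentially the paper's own: the corollary is stated as an immediate consequence of Proposition \ref{Invmeas}, combined with the product decomposition $U_M\times S_0\xrightarrow{\sim}N'$ of Theorem \ref{orbit-space-representative}, which is exactly your change of variables and Fubini on $U_M(F)\times S_0(F)$. One minor point: \cite[Theorem 2.51]{Foll16} is about homogeneous spaces $G/H$, not about orbit spaces of a free action, so it is not the right reference for uniqueness here; you do not need it, because on the product $U_M(F)\times S_0(F)$ the identity $dn=du\,d\dot n$ already determines $d\dot n$ (test it on functions of the form $\phi(u)\eta(c)$).
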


\subsection{A Bruhat decomposition} Since $P\overline{N}$ is open dense in $G$, there exists an open dense subset $N''\subset N$ such that the Bruhat decomposition
$$\dot{w}_0^{-1}n=mn'\overline{n} $$ holds
for $n\in N''$, $m\in M$, $n'\in N$, and $\overline{n}\in \overline{N}$. Shrink $N'$ if necessary, we may assume that the above decomposition and the geometric quotient $U_M\backslash N'$ both exist for $n\in N'$. Although it is complicated to characterize $N'$ and the map $n\mapsto m$ by the decomposition explicitly for generic $n\in N'$, we have an explicit description for representatives in the orbit space $U_M\backslash N'$.

\begin{prop}{\label{BruhatDecomp}}The map $\mathcal{N}: N'\rightarrow M$ given by $\dot{w}_0^{-1}n=mn'\overline{n}$ is 
$U_M$-equivariant. Suppose $n=\prod_{i=0}^5x_{\gamma_i}(c_i)$, $c_i\in F^\times(i=0,\cdots 5)$ is a typical representative for $U_M\backslash N'$. Assume that
$c_0-c_1c_5$, $c_0(c_0-c_1c_5)-c_2c_3c_4c_5\in F^\times$. Then the Bruhat decomposition $\dot{w}_0^{-1}n=mn'\overline{n}$ holds with $m$ lying in the big Bruhat cell of $M$. Write $m=u_1 t\dot{w}_M u_2$, then
\begin{align*}
u_1&=x_{\alpha_{23}}((c_0 - c_1c_5)c_4^{-1}c_5^{-1})x_{\alpha_{15}}((c_0 - c_1c_5)c_3^{-1}c_5^{-1})x_{\alpha_{4}}((-c_0 + c_1c_5)c_2^{-1}c_5^{-1}),\\
t&=\alpha_1^\vee(-c_4c_5)\alpha_2^\vee(c_0(c_0-c_1c_5)-c_2c_3c_4c_5)\alpha_3^\vee(-c_3c_4c_5^2)\alpha_4^\vee(-c_2c_3c_4c_5^3)\alpha_5^\vee(-c_3c_4c_5^2)\alpha_6^\vee(-c_4c_5)\\
u_2&=x_{\alpha_{23}}(c_0c_4^{-1}c_5^{-1})x_{\alpha_{15}}(c_0c_3^{-1}c_5^{-1})x_{\alpha_{4}}(-c_0c_2^{-1}c_5^{-1}),
\end{align*} in which 
\[
\alpha_{23}=(101111), \alpha_{15}=(001110),
\alpha_{4}=(000100).
\] Moreover, the coefficient of $x_\alpha$ in $\dot{w}_0^{-1}\overline{n}\dot{w}_0$ is given by
\[
\frac{-c_0c_5 + c_1c_5^2}{c_0^2 - c_0c_1c_5 - c_2c_3c_4c_5}.
\]
\end{prop}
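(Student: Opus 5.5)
First I will establish equivariance. For $u\in U_M$, Lemma \ref{w_0-action-on-roots} gives $\dot w_0^{-1}u\dot w_0=u'\in U_M$; concretely, since $\dot w_0$ fixes every $x_{\alpha_i}$ with $i\neq 2$, and these generate $U_M$, conjugation by $\dot w_0$ is the identity on $U_M$, so $u'=u$. Then
\[
\dot w_0^{-1}unu^{-1}=u\,\dot w_0^{-1}n\,u^{-1}=u\,m n'\overline n\,u^{-1}=(umu^{-1})(u n' u^{-1})(u\overline n u^{-1}),
\]
because $M$ normalizes $N$ and $\overline N$. Uniqueness of the decomposition in the open cell $P\overline N$ then gives $\mathcal N(unu^{-1})=u\,\mathcal N(n)\,u^{-1}$.

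For the explicit formula I will reduce the problem to a verification, since the candidate $m=u_1t\dot w_Mu_2$ and the $\overline N$-part are given. The plan is to show that
\[
\dot w_0\,u_1t\dot w_Mu_2\,n'\,\overline n = n
\]
for suitable $n'\in N$ and $\overline n\in\overline N$, where both are determined along the way. The cleanest route is to work in a faithful representation of $G$: the $27$-dimensional minuscule representation of $E_6^{\rm sc}$. There each $x_\beta(c)$ is $1+cX_\beta$ with $X_\beta^2=0$, and each $\dot s_i$ is an explicit signed permutation matrix built from $\xi_i$. I will compute $\dot w_0^{-1}n$ as a $27\times27$ matrix with polynomial entries in the $c_i$.

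Next I will extract the $P\overline N$ decomposition. The $\overline N$-component, and in particular the $x_{-\alpha_2}$ coefficient of $\dot w_0^{-1}\overline n\dot w_0$ appearing in the final claim, can be read off from suitable ratios of matrix minors. Taking a weight vector $v$ of the lowest $M$-isotypic piece, we have $g=mn'\overline n$ if and only if $g\overline n^{-1}$ stabilizes the flag defining $P$. Generalized minors of the form $\langle g v_{\lambda},v_{\lambda}\rangle$, for fundamental weights $\lambda$, compute the torus part: $\Delta_{\omega_2}(\dot w_0^{-1}n)$ should equal
\[
c_0(c_0-c_1c_5)-c_2c_3c_4c_5,
\]
up to sign, and this is exactly why that expression is assumed invertible.

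With $m$ in hand, I will decompose it in $M\simeq(\mathrm{GL}_1\times\mathrm{SL}_6)/S$, and this is where I expect the main obstacle. Pushing $m$ through $\xi$ turns it into a $6\times6$ matrix. Its Bruhat decomposition $u_1t\dot w_Mu_2$ follows from the Gauss decomposition using antidiagonal minors, which read off the torus coordinates. The parameters of $u_1$ and $u_2$ are ratios of neighbouring minors, and the nonvanishing of $c_0-c_1c_5$ and of the $c_i$ puts $m$ in the big cell. The hard part is tracking signs and normalizations consistently: the signs in the Chevalley structure constants, the lifts $\dot s_i$, and the identification of $\alpha_i^\vee$ with diagonal matrices. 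A mistake in any of these flips the $-c_4c_5$-type factors. I would therefore first obtain the formulas symbolically (as the authors did, by Magma), and then certify them. To certify, I substitute back and check the identity $\dot w_0 m n'\overline n=n$ by repeated use of the Chevalley commutator relations and the relations $\dot s_i x_\beta(c)\dot s_i^{-1}=x_{s_i\beta}(\pm c)$. This reduces the proof to a finite exact computation in the Chevalley group.
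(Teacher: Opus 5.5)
Your equivariance argument is correct and is the paper's own. The paper writes $\Theta_M(u)mu^{-1}$, and you rightly note that $\Theta_M=\mathrm{Ad}(\dot w_0^{-1})$ is the identity on $U_M$.

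The explicit part has a genuine problem, and the tools you propose expose it. The minor $\langle gv_{\omega_2},v_{\omega_2}\rangle$ with a \emph{highest} weight vector is the wrong one. Here $v_{\omega_2}=X_{\gamma_0}$ lives in the adjoint representation, not in the $27$. Moreover $N$ centralizes it and $\dot w_0^{-1}$ moves it to the $-\gamma_0$ line, so this minor vanishes identically on $\dot w_0^{-1}N$.

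For $g=mn'\overline n\in P\overline N$ you must use the lowest weight vector instead. $\overline N$ fixes $X_{-\gamma_0}$, $n'$ only adds terms of higher $\gamma_0^\vee$-degree, and $M$ acts on $\mathfrak g_{-\gamma_0}$ through $-\gamma_0$. Hence the $X_{-\gamma_0}$-coefficient of $\mathrm{Ad}(g)X_{-\gamma_0}$ is $\gamma_0(m)^{-1}$, not $\gamma_0(m)$.

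This coefficient is a polynomial in the $c_i$, because $\mathrm{Ad}(\dot w_0^{-1}n)$ has polynomial entries. So $\gamma_0(t)$, which equals $\gamma_0(m)$ up to the constant $\gamma_0(\dot w_M)$, is the reciprocal of a polynomial. It therefore cannot equal the nonconstant polynomial $c_0(c_0-c_1c_5)-c_2c_3c_4c_5$ that the displayed $t$ produces via $\langle\gamma_0,\alpha_i^\vee\rangle=\delta_{i2}$. If your guess that the minor is $\pm\bigl(c_0(c_0-c_1c_5)-c_2c_3c_4c_5\bigr)$ is right, the $\alpha_2^\vee$-entry of $t$ is the inverse of the displayed one.

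The same obstruction follows from your own equivariance computation with $z=\gamma_0^\vee(s)\in Z_M^0$ in place of $u\in U_M$. Since $\dot w_0^{-1}z\dot w_0=z^{-1}$, you get $\mathcal N(znz^{-1})=z^{-2}\mathcal N(n)$, while $znz^{-1}$ has coordinates $(s^2c_0,sc_1,\dots,sc_5)$. By uniqueness of $u_1t\dot w_Mu_2$, this forces $u_1,u_2$ to be invariant and $t(s^2c_0,sc_1,\dots,sc_5)=\gamma_0^\vee(s^{-2})\,t(c)$.

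The displayed $u_1,u_2$ (weight $0$) pass this test. So does the $x_\alpha$-coefficient, whose weight $-1$ is forced by $\overline n\mapsto z\overline nz^{-1}$. The displayed $t$, however, is multiplied by $\gamma_0^\vee(s^{2})$; for instance its $\alpha_1^\vee$-entry $-c_4c_5$ has weight $+2$ instead of $-2$.

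Hence the identity $\dot w_0u_1t\dot w_Mu_2n'\overline n=n$ that you plan to certify by Chevalley relations is false for the stated $t$, and no sign bookkeeping will repair it. The paper's table records the same torus entries, so the discrepancy lies in the statement itself. The torus component, whose weights are exactly the negatives of the required ones, must be recomputed before your verification or the paper's collection argument can go through. That correction also changes the value $\gamma_0(\dot m)$ used later in the local coefficient formula.
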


The explicit Bruhat decomposition and the formula for $\mathcal{N}$ were first found by a computation in Magma \cite{Magma97}. Here we sketch a proof for completeness. 

\begin{proof}
The equivariance is formal and is independent of the coordinate calculation.
Suppose
\[
\dot{w}_0^{-1}n=mn'\overline{n},
\qquad m\in M,\quad n'\in N,\quad \overline{n}\in \overline{N}.
\]
For $u\in U_M$, since $M$ normalizes both $N$ and $\overline{N}$, we have
\begin{align*}
\dot{w}_0^{-1}(unu^{-1})
&=(\dot{w}_0^{-1}u\dot{w}_0)(\dot{w}_0^{-1}n)u^{-1}\\
&=\Theta_M(u)mn'\overline{n}u^{-1}\\
&=(\Theta_M(u)mu^{-1})(un'u^{-1})(u\overline{n}u^{-1}),
\end{align*}
where $\Theta_M=\mathrm{Ad}(\dot{w}_0^{-1})$. By uniqueness of the
factorization in the open cell $P\overline{N}=MN\overline{N}$, the
$M$-component is $\Theta_M(u)mu^{-1}$. This proves the stated equivariance.

It remains to justify the displayed formula for $m$ and the coefficient of
$x_\alpha$. Set
\[
X=c_0-c_1c_5,\qquad Y=c_0(c_0-c_1c_5)-c_2c_3c_4c_5 .
\]
We make the calculation as an identity over the localized ring
\[
R_0=\mathbb{Z}
\bigl[c_0^{\pm1},c_1^{\pm1},c_2^{\pm1},c_3^{\pm1},
c_4^{\pm1},c_5^{\pm1},X^{-1},Y^{-1}\bigr].
\]
After base change from $R_0$ to $F$, this is exactly the open subset specified
in the proposition. Thus it is enough to prove the identity in the split
Chevalley group scheme $G_{R_0}$ attached to the fixed pinning.

We use
\[
\dot{s}_i=x_{\alpha_i}(1)x_{-\alpha_i}(-1)x_{\alpha_i}(1),
\]
and the reduced words for $\dot{w}_0$ and $\dot{w}_M$ fixed above. The only
relations used in the calculation are the following Chevalley identities:
\[
\dot{s}_i^{-1}x_\beta(r)\dot{s}_i
=x_{s_i\beta}(\varepsilon_i(\beta)r),
\qquad \varepsilon_i(\beta)\in\{\pm1\},
\]
\[
x_\beta(r)x_\delta(s)
=x_\delta(s)x_\beta(r)x_{\beta+\delta}(N_{\beta,\delta}rs)
\quad(\beta+\delta\in\Phi),
\]
with commutation when $\beta+\delta\notin\Phi$, and the rank-one identity
\[
x_\beta(r)x_{-\beta}(s)
=x_{-\beta}\!\left(\frac{s}{1+rs}\right)
\beta^\vee(1+rs)
x_\beta\!\left(\frac{r}{1+rs}\right).
\]
Since $E_6$ is simply laced, $N_{\beta,\delta}=\pm1$. In the collection below
each denominator which occurs is a unit in $R_0$, so every step is an equality
in $G(R_0)$, not a numerical specialization.

Fix once and for all an order on $\Phi_M^+$, an order on $\Phi_N$ refining
height, and the opposite order on $-\Phi_N$. With these choices, the
multiplication map
\[
U_M\times A\times U_M\times N\times\overline{N}\longrightarrow G,
\qquad
(u_1,t,u_2,n',\overline{n})\longmapsto
u_1t\dot{w}_Mu_2n'\overline{n}
\]
is an open immersion on the cell under consideration. 
Hence such a decomposition is unique whenever it exists. The table below 
records the corresponding decomposition of the product
\[
\dot{w}_0^{-1}x_{\gamma_0}(c_0)x_{\gamma_1}(c_1)x_{\gamma_2}(c_2)
x_{\gamma_3}(c_3)x_{\gamma_4}(c_4)x_{\gamma_5}(c_5)
\]
into the above ordered cell, here we view the coordinates $c_i$ as indeterminate. All other coordinates in
the two $U_M$-components which are not presented are zero, and the remaining collected factors have roots
only in $\Phi_N$ and $\Phi_N^-$; hence they define elements
$n'\in N(R_0)$ and $\overline{n}\in\overline{N}(R_0)$.
\[
\begin{array}{c|c|c}
\text{Component} & \text{coordinate} & \text{value} \\ \hline
u_1 & x_{\alpha_{23}} & Xc_4^{-1}c_5^{-1}\\
u_1 & x_{\alpha_{15}} & Xc_3^{-1}c_5^{-1}\\
u_1 & x_{\alpha_4} & -Xc_2^{-1}c_5^{-1}\\
t & \alpha_1^\vee & -c_4c_5\\
t & \alpha_2^\vee & Y\\
t & \alpha_3^\vee & -c_3c_4c_5^2\\
t & \alpha_4^\vee & -c_2c_3c_4c_5^3\\
t & \alpha_5^\vee & -c_3c_4c_5^2\\
t & \alpha_6^\vee & -c_4c_5\\
u_2 & x_{\alpha_{23}} & c_0c_4^{-1}c_5^{-1}\\
u_2 & x_{\alpha_{15}} & c_0c_3^{-1}c_5^{-1}\\
u_2 & x_{\alpha_4} & -c_0c_2^{-1}c_5^{-1}
\end{array}
\]
Let us spell out how this finite table proves the asserted decomposition. The
collection algorithm proceeds by moving reduced expressions for $\dot{w}_0^{-1}$ from \eqref{eqn: simple-reflection-w0} across the six root factors, applying the rank-one
identity exactly when a root is paired with its negative, and otherwise using
the Chevalley commutator relation to restore the fixed order. At the end of
this process the ordered word has the form
\[
u_1t\dot{w}_Mu_2n'\overline{n}.
\]
 Since the above multiplication map is an
open immersion, there is no second possible set of coordinates. This gives the
identity
\[
\dot{w}_0^{-1}n=u_1t\dot{w}_Mu_2n'\overline{n}
\]
in $G(R_0)$, and therefore after specialization to $F$. The displayed values
are exactly the formulas for $u_1$, $t$, and $u_2$ in the statement. Since
$c_i\ne0$, $X\ne0$, and $Y\ne0$, every simple coroot parameter in $t$ is
nonzero. Hence $m=u_1t\dot{w}_Mu_2$ lies in the big Bruhat cell of $M$.

The same collected word records the first coordinate of the opposite
unipotent factor. If
\[
\dot{w}_0^{-1}\overline{n}\dot{w}_0
=\prod_{\beta\in\Phi_N}^{\rightarrow}x_\beta(y_\beta)
\]
in a height-refining order, then the collection gives the additional
coordinate
\[
\begin{array}{c|c|c}
\text{factor} & \text{coordinate} & \text{value} \\ \hline
\dot{w}_0^{-1}\overline{n}\dot{w}_0
& x_{\alpha_2} & -c_5XY^{-1}
\end{array}
\]
and hence
\[
y_{\alpha_2}=-\frac{c_5X}{Y}
=\frac{-c_0c_5+c_1c_5^2}
{c_0^2-c_0c_1c_5-c_2c_3c_4c_5}.
\]
This coordinate is independent of the chosen height-refining order:
$\alpha_2=(010000)$ is the minimal root in $\Phi_N$, and commutators of
positive root subgroups in $N$ can only contribute to root subgroups of
strictly larger height. Thus no later collection step can change the
$x_{\alpha_2}$-coordinate. This is the coefficient of $x_\alpha$ in the
statement, since $\alpha=\alpha_2$ for the parabolic under consideration.
\end{proof}

\begin{cor}{\label{finiteetale1}}The induced map of $\mathcal{N}$ on $U_M\backslash N'$ is \'etale onto its image of degree 2. 
\end{cor}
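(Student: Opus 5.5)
The plan is to identify the quotient on the $M$-side, write the induced map in explicit coordinates, and then read off both the étaleness and the degree from those coordinates.

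\emph{Step 1: the quotient on the $M$-side.} By Proposition \ref{BruhatDecomp}, $\mathcal{N}(unu^{-1})=\Theta_M(u)\mathcal{N}(n)u^{-1}$. By Lemma \ref{w_0-action-on-roots}, $\dot w_0$ fixes every $x_{\alpha_i}(a)$ with $i\neq 2$. These elements generate $U_M$, so $\Theta_M$ is the identity on $U_M$, and $\mathcal{N}$ intertwines conjugation on $N'$ with conjugation on $M$. On the big cell $U_M A\dot w_M U_M$ of $M$, write $m=u_1t\dot w_Mu_2$. Then
\[
umu^{-1}=(uu_1)\,t\,\dot w_M\,(u_2u^{-1}).
\]
Hence the pair $(t,\,u_2u_1)\in A\times U_M$ is a complete invariant, and $U_M\backslash(\text{big cell})\simeq A\times U_M$ as varieties. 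So the induced map $\overline{\mathcal N}:U_M\backslash N'\simeq S_0\to A\times U_M$ is computed on the slice of Theorem \ref{orbit-space-representative} by $c\mapsto(t(c),u_2(c)u_1(c))$.

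\emph{Step 2: explicit coordinates.} Using the root subgroups $\alpha_{23},\alpha_{15},\alpha_4$ from Proposition \ref{BruhatDecomp}, I would first check with the commutator relations whether these three subgroups commute. Granting that, $u_2u_1$ has coordinates
\[
\frac{W}{c_4c_5},\qquad \frac{W}{c_3c_5},\qquad -\frac{W}{c_2c_5},\qquad\text{where } W=2c_0-c_1c_5 .
\]
The torus part $t$ records $c_4c_5$, $c_3c_4c_5^2$, $c_2c_3c_4c_5^3$ and $Y=c_0(c_0-c_1c_5)-c_2c_3c_4c_5$. From these one recovers $W$, $Y$ and the monomials $c_2c_5,\ c_3c_5,\ c_4c_5$ as regular functions on the image. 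The crucial observation is the identity
\[
c_0\,(W-c_0)=Y+c_2c_3c_4c_5 .
\]
Given the image point, this says that $c_0$ and $c_0-c_1c_5$ are the two roots of a monic quadratic whose coefficients are determined by the image. The involution
\[
c_0\longleftrightarrow c_0-c_1c_5,\qquad\text{equivalently } c_1\mapsto -c_1,\ c_0\mapsto c_0-c_1c_5,
\]
with the other $c_i$ fixed, preserves $W$, $Y$, $c_2,\dots,c_5$, and hence preserves $\overline{\mathcal N}$. So every fibre contains the two points exchanged by this involution. These points are distinct exactly when $c_1c_5\neq 0$, which holds on $S_0$.

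\emph{Step 3: degree and étaleness.} The proof then splits into two claims.
\begin{enumerate}[(i)]
\item \emph{Generic fibres have exactly two points.} I would solve for the remaining slice coordinates from the invariants. After fixing the root of the quadratic, the data $c_2c_5,c_3c_5,c_4c_5,Y$ together with the coroot parameters $\alpha_1^\vee,\alpha_3^\vee,\alpha_4^\vee$ of $t$ should determine $c_1,\dots,c_5$ uniquely. This is where I must check that $c_5$ is pinned down. If it is not pinned down by $t$ and $u_2u_1$ alone, I would fall back on the $n'$ and $\overline n$ components, using the $x_{\alpha_2}$-coefficient $-c_5X/Y$ from Proposition \ref{BruhatDecomp}. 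In any case, the target of the corollary is the image variety, so the solvability has to be checked on the actual image.
\item \emph{Étaleness.} I would compute the $6\times 6$ Jacobian of $c\mapsto$ (six independent coordinates of the image). It should be a monomial in the $c_i$ times $(2c_0-c_1c_5)$, up to a factor of $Y$. Then I shrink $S_0$ by $D(2c_0-c_1c_5)$; by the closing remark of Theorem \ref{orbit-space-representative}, the open subset may be shrunk freely. This is exactly the ramification locus of the quadratic, where its discriminant $W^2-4(Y+c_2c_3c_4c_5)=(c_1c_5)^2$ coincides with the condition $c_1c_5\ne0$ up to the factor $W$. Off this locus the map is quasi-finite, unramified and flat of degree $2$, hence finite étale of degree $2$ onto its open image.
\end{enumerate}

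\emph{Main obstacle.} The hard step is (i): confirming that the image really has dimension $6$, i.e.\ that the invariants separate everything except the involution. The natural invariants are homogeneous under the torus scaling of $c_5$, which is why the corollary is later sharpened after quotienting by $Z_M^0$. I would first verify the full Jacobian rank symbolically; where a coordinate is only determined up to the $Z_M^0$-scaling, I would track it through the $\alpha_2^\vee$-component and the central character.
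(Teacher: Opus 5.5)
\paragraph{Gap.} The gap is in your choice of map in Step~1, and it cannot be repaired within your approach. Passing from $\mathcal N(\dot n(c))=u_1t\dot w_Mu_2$ to its invariant $(t,u_2u_1)$ under $m\mapsto umu^{-1}$ discards exactly one coordinate.

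Put $t_1=-c_4c_5$, $t_3=-c_3c_4c_5^2$, $t_4=-c_2c_3c_4c_5^3$, $t_5=(c_0-c_1c_5)/(c_4c_5)$ and $t_6=c_0/(c_4c_5)$. The formulas of Proposition~\ref{BruhatDecomp} give $u_1=\phi(t_5)$ and $u_2=\phi(t_6)$ for one and the same one-parameter subgroup
$\phi(s)=x_{\alpha_{23}}(s)\,x_{\alpha_{15}}(-t_1^2s/t_3)\,x_{\alpha_4}(t_1t_3s/t_4)$
(the three root groups commute). Hence $u_2u_1=\phi(t_5+t_6)$. Your map therefore factors through the five functions $c_2c_5,\ c_3c_5,\ c_4c_5,\ W,\ Y$ on a six-dimensional source, so every nonempty fibre has positive dimension.

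Your ``crucial observation'' is where this slips through. The constant term of your quadratic is $Y+c_2c_3c_4c_5=Y+(c_2c_5)(c_3c_5)(c_4c_5)c_5^{-2}$, and $c_5^2$ is not a function of $(t,u_2u_1)$. Consequently:
\begin{itemize}
\item step (i) cannot be completed;
\item the Jacobian in (ii) has rank at most $5$;
\item given Proposition~\ref{BruhatDecomp}, the statement read with ``induced map'' as a map into $U_M\backslash M$ is false, not merely hard.
\end{itemize}
So the obstacle you flagged resolves negatively. Your fallback through $n'$ and $\overline n$ would prove something about a different map, not about $\mathcal N$.

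\paragraph{What the paper does.} The paper does not quotient on the $M$-side. It takes the induced map to be $c\mapsto\mathcal N(\dot n(c))\in M$ on the slice, keeping $u_1$ and $u_2$ separately. The image is then parametrized by $(t_1,\dots,t_6)$ with $t_2=Y$; the remaining four unipotent coordinates are functions of these.

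Now $t_1,t_5,t_6$ give $c_0=-t_1t_6$ and $c_1c_5=t_1(t_5-t_6)$ individually. Moreover $t_2-t_1^2t_5t_6=-c_2c_3c_4c_5$, and therefore $c_5^2=t_4/(t_2-t_1^2t_5t_6)$. Once a square root $c_5$ is chosen, $c_1,\dots,c_4$ are determined.

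The fibre is $\{c,\iota c\}$ with $\iota(c_0,c_1,\dots,c_5)=(c_0,-c_1,\dots,-c_5)$, which is conjugation by $\alpha^\vee(-1)$. This is not your involution $c_0\leftrightarrow c_0-c_1c_5$. Yours swaps $t_5$ and $t_6$, hence swaps $u_1$ and $u_2$, and so does not fix $\mathcal N(\dot n(c))$. \'Etaleness then reduces to the nonvanishing of the Jacobian of $c\mapsto(t_1,\dots,t_6)$ on a dense open subset.
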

\begin{proof} Let $t_1=-c_4c_5, t_2=c_0(c_0-c_1c_5)-c_2c_3c_4c_5, t_3=-c_3c_4c_5^2, t_4=-c_2c_3c_4c_5^3, t_5=(c_0-c_1c_5)c_4^{-1}c_5^{-1}, t_6=c_0c_4^{-1}c_5^{-1}, t_7=(c_0-c_1c_5)c_3^{-1}c_5^{-1}, t_8=-(c_0-c_1c_5)c_2^{-1}c_5^{-1}, t_9=c_0c_3^{-1}c_5^{-1}, t_{10}=-c_0c_2^{-1}c_5^{-1}$. Then $c_0-c_1c_5=-t_1t_5, c_3c_5=\frac{t_3}{t_1}, c_2c_5=\frac{t_4}{t_3},c_0=-t_1t_6$, thus
$$t_7=-\frac{t_1^2t_5}{t_3}, t_8=\frac{t_1t_3t_5}{t_4}, t_9=-\frac{t_1^2t_6}{t_3}, t_{10}=\frac{t_1t_3t_6}{t_4}.$$ It follows that the induced map $\mathcal{N}$
on $U_M\backslash N'$ is completely determined by 
$$(c_0,c_1,c_2,c_3,c_4,c_5)\mapsto (t_1,t_2,t_3,t_4,t_5,t_6).$$ The map clearly factors through the action of $\mathbb{Z}/2\mathbb{Z}$ by
$(c_0,c_1,c_2,c_3,c_4,c_5)\mapsto (c_0,-c_1,-c_2,-c_3,-c_4,-c_5)$, and the determinant of its Jacobian matrix is $$\frac{c_3c_5^3}{c_4}(2c_0^2(c_3c_4+c_2c_5)-2c_2c_3^2c_4^2c_5+c_0c_1c_5^2(c_4-c_2))$$
Given $c_i\neq 0$ for $i=0,\cdots, 5$ and shrink $N'$ if necessary by assuming that the polynomial in the bracket is non-zero. By further assuming that $t_2-t_1^2t_5t_6\neq 0$, the fiber of $\mathcal{N}$ is uniquely determined by the algebraic relations:
\[c_1=-\frac{t_1(t_5-t_6)}{c_5},c_2=\frac{t_4}{t_3c_5}, c_3=\frac{t_3}{t_1c_5}, c_4=-\frac{t_1}{c_5}, c_5^2=\frac{t_4}{t_2-t_1^2t_5t_6}\]
From this, it is immediate that the induced map $\mathcal{N}$ on $U_M\backslash N'$ is \'etale of degree 2 onto its image.
\end{proof}

\begin{remark}We expect that the induced map $\mathcal{N}$ on $U_M\backslash N'$ is finite \'etale onto its image for general self-associate parabolic subgroup $P=MN$ of $G$ such that the action of $U_M$ on $N'$ have trivial stabilizer for generic $n\in N'$. However we have not yet found a uniform proof.  
    
\end{remark}

From now on we set $R$ to be the Zariski dense subset of the affine space $\mathbb{A}^6$ defined by \[R=\{(c_0,\cdots, c_5): c_i\neq 0, c_0-c_1c_5\neq 0, c_0(c_0-c_1c_5)-c_2c_3c_4c_5\neq 0, 2c_0^2(c_3c_4+c_2c_5)-2c_2c_3^2c_4^2c_5+c_0c_1c_5^2(c_4-c_2)\neq 0\}.\] After replacing $N'$ by the corresponding saturated open subset, Theorem \ref{orbit-space-representative}, Proposition \ref{BruhatDecomp}, and Corollary \ref{finiteetale1} identify $R$ as a set of orbit space representatives for the geometric quotient $U_M\backslash N'$.

\begin{cor}{\label{finiteetale2}} The map $\mathcal{N}: N'\rightarrow M$ is \'etale onto its image of degree 2.
\end{cor}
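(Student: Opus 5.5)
The plan is to deduce Corollary \ref{finiteetale2} from Corollary \ref{finiteetale1} using the product structure $N'\simeq U_M\times S_0$ from Theorem \ref{orbit-space-representative} and the $U_M$-equivariance of $\mathcal{N}$ from Proposition \ref{BruhatDecomp}. By that theorem, every $n\in N'$ is uniquely $n=u\,\dot n(c)\,u^{-1}$ with $u\in U_M$ and $c\in R$. Equivariance then gives
\[
\mathcal{N}(n)=\Theta_M(u)\,\mathcal{N}(\dot n(c))\,u^{-1}, \qquad \Theta_M=\mathrm{Ad}(\dot w_0^{-1}).
\]
By Lemma \ref{w_0-action-on-roots}, $\Theta_M$ fixes every simple root subgroup of $M$, so $\Theta_M$ restricts to the identity on $U_M$. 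Hence $\mathcal{N}(n)=u\,\mathcal{N}(\dot n(c))\,u^{-1}$. Therefore $\mathcal{N}$ is identified with the composite
\[
U_M\times R\longrightarrow U_M\times \mathcal{N}(R)\longrightarrow M,\qquad (u,c)\mapsto (u,\mathcal{N}(\dot n(c)))\mapsto u\,\mathcal{N}(\dot n(c))\,u^{-1}.
\]
The first arrow is $\mathrm{id}\times(\mathcal{N}|_R)$, which is étale of degree $2$ onto its image by Corollary \ref{finiteetale1}.

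The key step is to show that the second arrow, $(u,m)\mapsto umu^{-1}$, is an isomorphism onto a locally closed image, so that it does not change the étale property or the degree. By Proposition \ref{BruhatDecomp}, $\mathcal{N}(\dot n(c))=u_1t\dot w_Mu_2$ lies in the big cell $U_MA\dot w_MU_M$, which is isomorphic to $U_M\times A\times U_M$. Conjugating by $u$ gives $(uu_1)\,t\dot w_M\,(u_2u^{-1})$. So in Bruhat coordinates the composite is
\[
(u,c)\mapsto \bigl(u\,u_1(c),\ t(c),\ u_2(c)\,u^{-1}\bigr).
\]

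To see that this is injective with regular inverse, I use the explicit shape of $u_1$ and $u_2$. Both lie in the product of the three commuting root subgroups $x_{\alpha_{23}},x_{\alpha_{15}},x_{\alpha_4}$, and their coordinates satisfy $u_1$-coordinate $=(X/c_0)\cdot u_2$-coordinate, with $X=c_0-c_1c_5$; this is the relation $t_5/t_6=t_7/t_9=t_8/t_{10}$ from the proof of Corollary \ref{finiteetale1}. From the image triple $(a,t,b)$, with $a=uu_1$ and $b=u_2u^{-1}$, we know $ab=u_1u_2$ and $t$. In these abelian coordinates $u_1u_2$ has coordinates $(1+X/c_0)$ times those of $u_2$, and the coordinates of $u_1u_2$ together with $t$ determine $c$ up to the $\mathbb{Z}/2$ ambiguity already present in Corollary \ref{finiteetale1}. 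Once $c$ is fixed, $u=a\,u_1(c)^{-1}$ is recovered. This could fail where $1+X/c_0=0$, so I will shrink $R$ by requiring $2c_0-c_1c_5\neq0$. The inverse is then given by rational formulas, and a Jacobian computation in the product coordinates shows it is block triangular: the identity in the $u$-directions and the Jacobian from Corollary \ref{finiteetale1} in the $c$-directions.

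It follows that $\mathcal{N}$ is étale, since its differential is invertible, and that each fiber over the image has exactly two points. These are the two $\mathbb{Z}/2$-related points $(u,c)$ and $(u,c')$; the same $u$ works for both because $u_1$ and $u_2$ are invariant under $c\mapsto c'$, as the $t_i$ are.

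I expect the main obstacle to be that the image $\mathcal{N}(N')$ has dimension $21$ while $M$ has dimension $36$. So "étale onto its image" has to be read with the image given the structure of a locally closed subvariety, namely the image of the locally closed immersion above. Checking that this map is actually an immersion, rather than only injective on points, is where the explicit formulas for $u_1,u_2$ must be used carefully. If the abelian-coordinate argument breaks down, my fallback is to argue fiberwise via $U_M$-equivariance: $\mathcal{N}$ is a $U_M$-equivariant map between $U_M$-torsors over $U_M\backslash N'$ and its image, so étaleness and degree descend from Corollary \ref{finiteetale1}. For this I would need freeness of the $U_M$-action on the image, which follows from freeness on $N'$ together with the fact that the map is finite.
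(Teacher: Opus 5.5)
\textbf{There is a genuine gap, and it is in the statement itself, not only in your argument.} Your strategy is the paper's own: the square with conjugation $U_M\times R\to N'$ on top, the twisted conjugation $\tau:U_M\times\mathcal{N}(R)\to\mathcal{N}(N')$ at the bottom, and Corollary \ref{finiteetale1} on the left. The paper simply asserts that $\tau$ is an isomorphism. You rightly single this out as the crux, but your verification fails, and the failure cannot be repaired.

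A minor slip first: from $a=uu_1$ and $b=u_2u^{-1}$ one gets $ba=u_2u_1$, not $ab$. This is harmless, since $x_{\alpha_{23}},x_{\alpha_{15}},x_{\alpha_4}$ commute.

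The false step is the claim that $t$ and $u_2u_1$ recover $c$ up to $\mathbb{Z}/2$. $U_M$ acts on the big cell by $(u_1,t,u_2)\mapsto(uu_1,t,u_2u^{-1})$, so the pair $(t,u_2u_1)$ is a complete orbit invariant. By Proposition \ref{BruhatDecomp} this pair carries only five functions of $c\in R$. The torus part $t$ yields $c_2c_5$, $c_3c_5$, $c_4c_5$ and $Y=c_0(c_0-c_1c_5)-c_2c_3c_4c_5$. Each coordinate of $u_2u_1$ is $s:=2c_0-c_1c_5$ times a quantity already known. The inversion in Corollary \ref{finiteetale1} needs $t_5$ and $t_6$ separately, i.e.\ $c_0$ and $c_0-c_1c_5$ separately, and that is exactly what is lost once $u$ is unknown. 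Concretely, for arbitrary $c_0'$ put
\[
c_5'^2=\frac{c_2c_3c_4c_5^3}{c_0'(s-c_0')-Y},\qquad c_1'=\frac{2c_0'-s}{c_5'},\qquad c_i'=\frac{c_ic_5}{c_5'}\quad(i=2,3,4).
\]
Then $t(c')=t(c)$ and $u_2(c')u_1(c')=u_2(c)u_1(c)$. So with $u'=u\,u_1(c)u_1(c')^{-1}$ one has $\mathcal{N}(u'\dot n(c')u'^{-1})=\mathcal{N}(u\dot n(c)u^{-1})$.

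\textbf{Consequences.} The fibres of $\mathcal{N}$ on $N'$ are curves. Hence $\mathcal{N}$ is not quasi-finite, so not \'etale. Likewise $\tau$ is not injective: its source has dimension $21$, its image only $15+5=20$.

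\textbf{Your fallback fails for the same reason.} The map induced on $U_M$-orbit spaces is $c\mapsto(t(c),u_2(c)u_1(c))$. Corollary \ref{finiteetale1} only analyses the restriction $c\mapsto\mathcal{N}(\dot n(c))$ to the slice, and $\mathcal{N}(R)$ is not a slice for the $U_M$-action on $M$.

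\textbf{This is not an artifact of the explicit formulas.}
\begin{enumerate}
\item $\Theta_M$ is trivial on $M_D$, so $\mathcal{N}$ commutes with $M_D$-conjugation.
\item Take $n=x_{\gamma_0}(z)x_{\gamma_1}(1)x_{\gamma_5}(1)$ with $z$ generic. It is fixed by the block-diagonal $\mathrm{SL}_3\times\mathrm{SL}_3\subset M_D$, so $\mathcal{N}(n)$ lies in the centralizer of that subgroup.
\item That centralizer is a torus containing $h=\mathrm{diag}(r,r,r,r^{-1},r^{-1},r^{-1})$. Hence $\mathcal{N}(hnh^{-1})=\mathcal{N}(n)$, although $hnh^{-1}=x_{\gamma_0}(z)x_{\gamma_1}(r^3)x_{\gamma_5}(r^{-3})$ moves with $r$.
\item By $M$-equivariance and semicontinuity of fibre dimension, this positive-dimensional fibre propagates to every point.
\end{enumerate}

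So no argument of this shape can prove the statement as written. The explicit formulas only show that the restriction $\mathcal{N}|_R$ is generically two-to-one onto its image.
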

\begin{proof}
     By Theorem \ref{orbit-space-representative}, the action of $U_M$ has trivial stabilizer. Therefore we obtain a commutative diagram
    \[ \begin{tikzcd}
U_M\times R \arrow{r} \arrow[swap]{d}{\mathrm{Id}_{U_M}\times \mathcal{N}} & N' \arrow{d}{\mathcal{N}} \\%
U_M\times \mathcal{N}(R) \arrow{r}& \mathcal{N}(N')
\end{tikzcd}
\] where the top horizontal map is the action of $U_M$ on $N$ by conjugation, and the bottom horizontal map is the $\mathrm{Ad}(\dot{w}_0^{-1})$-twisted conjugate action of $U_M$ on $M$. Both of the horizontal maps are isomorphisms.  By Corollary \ref{finiteetale1}, the left vertical map is \'etale of degree 2, so is the right vertical map.  
\end{proof}

\subsection{Local coefficient formula}
In this section, we will obtain an explicit formula for the local coefficient in our case in terms of the Mellin transform of certain partial Bessel functions. 
\subsubsection{Preparations}{\label{PrepLCF}}
Under Assumption 4.1 and 5.1 of \cite{Sha02}, it is proved by Theorem 6.2 of \cite{Sha02} that the local coefficient $C_\psi(s,\sigma)$ can be written as an integral of certain partial Bessel functions over the p-adic manifold obtained by the $F$-points of $Z_M^0U_M\backslash N'$. Let us first verify the assumptions in our case. From now on we denote $\alpha=\alpha_2$, and let $Z_G$ be the center of $G$.
\begin{lem}{\label{Assumption5-1}} The map $\alpha^\vee : F^\times\overset{\gamma_0^\vee}{\rightarrow} Z_M\rightarrow Z_G\backslash Z_M$ is injective and $\alpha(\alpha^\vee(t))=t$ for all $t\in F^\times$.    
\end{lem}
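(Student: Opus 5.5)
We will deduce both assertions from the pairings $\langle\alpha_k,\gamma_0^\vee\rangle$ recorded in the proof of Proposition \ref{Invmeas}, together with the fact that $Z_G$ acts trivially on every root subgroup. Injectivity will follow from the identity $\alpha(\alpha^\vee(t))=t$ once we check that $\alpha=\alpha_2$ descends to a character of $Z_G\backslash Z_M$.

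First we check that the map lands in $Z_M$. From the computations in the proof of Proposition \ref{Invmeas} we have
\[
\langle\alpha_k,\gamma_0^\vee\rangle=0 \quad (k=1,3,4,5,6),\qquad \langle\alpha_2,\gamma_0^\vee\rangle=1 .
\]
So for every $\beta\in\Phi_M$, which is an integral combination of $\alpha_1,\alpha_3,\ldots,\alpha_6$, we get
\[
\gamma_0^\vee(t)\,x_\beta(r)\,\gamma_0^\vee(t)^{-1}=x_\beta\bigl(t^{\langle\beta,\gamma_0^\vee\rangle}r\bigr)=x_\beta(r).
\]
Hence $\gamma_0^\vee(t)$ commutes with $A$ and with all root subgroups of $M$, so it lies in $Z_M$. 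This is consistent with the description of $A_M$ in Section \ref{GMpair}.

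Next we compute the pairing. Since $\alpha=\alpha_2$,
\[
\alpha(\alpha^\vee(t))=\alpha_2(\gamma_0^\vee(t))=t^{\langle\alpha_2,\gamma_0^\vee\rangle}=t .
\]
To make sense of this on the quotient $Z_G\backslash Z_M$, we note that $\alpha_2$ is trivial on $Z_G$. Indeed, $Z_G=\bigcap_{\beta\in\Phi}\ker\beta$ is the kernel of the adjoint action, and it acts trivially on $U_{\alpha_2}$. Therefore $\alpha_2|_{Z_M}$ factors through $Z_G\backslash Z_M$, and the identity $\alpha(\alpha^\vee(t))=t$ holds there as well.

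For injectivity, suppose $\alpha^\vee(t)$ is trivial in $Z_G\backslash Z_M$, that is, $\gamma_0^\vee(t)\in Z_G$. Applying the character $\alpha_2$, which is trivial on $Z_G$, gives $t=\alpha_2(\gamma_0^\vee(t))=1$. So the composite $F^\times\to Z_G\backslash Z_M$ is injective.

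There is no serious obstacle in this argument. The only points to take care with are that $\alpha_2$ genuinely descends to $Z_G\backslash Z_M$, and the sign and normalization of $\langle\alpha_2,\gamma_0^\vee\rangle=1$, which we have taken from the list in the proof of Proposition \ref{Invmeas}.
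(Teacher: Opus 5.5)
Your proposal is correct and takes essentially the same approach as the paper. Both arguments rest on $\langle\alpha_2,\gamma_0^\vee\rangle=1$: applying $\alpha_2$, which is trivial on $Z_G$, to $\gamma_0^\vee(t)$ gives $t=1$ for injectivity and gives $\alpha(\alpha^\vee(t))=t$ directly. Your additional checks that $\gamma_0^\vee(t)\in Z_M$ and that $\alpha_2$ descends to $Z_G\backslash Z_M$ are details the paper leaves implicit.
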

\begin{proof} If $\gamma_0^\vee(t)\in Z_G=\cap_{\beta\in \Delta}\ker \beta$, then $t^{\langle\alpha_2,\gamma_0^\vee\rangle}=t=1$. This shows that $\alpha^\vee$ is injective. The second statement of the lemma follows easily from $\alpha(\alpha^\vee(t))=t^{\langle\alpha_2,\gamma_0^\vee\rangle}=t$.     
\end{proof} 

This verifies the assumption 5.1 of \cite{Sha02}. Denote the image of $\alpha^\vee$ by $Z_M^0$. The local coefficient is an integral over the orbit space $Z_M^0U_M\backslash N'$, where $Z_M^0$ acts on $U_M\backslash N'$ by conjugation. We have
$$\alpha^\vee(t)(\prod_{i=0}^5x_{\gamma_i}(c_i))\alpha^\vee(t)^{-1}=\prod_{i=0}^5x_{\gamma_i}(t^{\langle \gamma_i, \gamma_0^\vee\rangle}c_i)=x_{\gamma_0}(t^2 c_0)\prod_{i=1}^5x_{\gamma_i}(t c_i)$$
The last equality holds since $\langle \alpha_i,\gamma_0^\vee\rangle=0$ for $i=1,3,4,5,6$, and $\langle \alpha_2,\gamma_0^\vee\rangle=1$, which implies that $\langle\gamma_i,\gamma_0^\vee\rangle=1$ for $i=1,3,4,5,6$, and $\langle \gamma_0,\gamma_0^\vee\rangle=2$. From this we also obtain that
\begin{lem}\label{etale-fundamental-group} The \'etale fundamental group of the map $\mathcal{N}$ on $U_M\backslash N'$ onto its image in Corollary \ref{finiteetale1}, hence the one for $\mathcal{N}: N'\rightarrow M$ in Corollary \ref{finiteetale2} is $\mathbb{Z}/2\mathbb{Z}$ and generated by $\alpha^\vee(-1)$.
    
\end{lem}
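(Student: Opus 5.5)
The plan is to show that the nontrivial deck involution of the degree-two map in Corollary \ref{finiteetale1} is the coordinate sign change $\tau:(c_0,c_1,\ldots,c_5)\mapsto(c_0,-c_1,\ldots,-c_5)$, and that $\tau$ is exactly conjugation by $\alpha^\vee(-1)$ on the slice.

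\textbf{Step 1: $\alpha^\vee(-1)$ acts by $\tau$.} The displayed formula just before the lemma, with $t=-1$, gives
\[
\alpha^\vee(-1)\Bigl(\prod_{i=0}^5 x_{\gamma_i}(c_i)\Bigr)\alpha^\vee(-1)^{-1}
= x_{\gamma_0}(c_0)\prod_{i=1}^5 x_{\gamma_i}(-c_i),
\]
which is $\tau$. Since $\alpha^\vee(-1)\in Z_M$, it commutes with $U_M$. Hence conjugation by $\alpha^\vee(-1)$ permutes $U_M$-orbits, preserves the slice $S$, and acts on $U_M\backslash N'\simeq R$ by $\tau$. One checks that $\tau$ preserves $R$: each defining polynomial of $R$ is even or odd under $\tau$ (for example $c_0-c_1c_5$ and $c_2c_3c_4c_5$ are invariant, and the last polynomial picks up a sign). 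By Lemma \ref{Assumption5-1}, $\alpha^\vee(-1)\notin Z_G$, so $\tau$ is a nontrivial element of order $2$.

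\textbf{Step 2: $\mathcal{N}$ is $\tau$-invariant.} The images $t_1,\ldots,t_6$ in the proof of Corollary \ref{finiteetale1} are each products involving an even number of $c_1,\ldots,c_5$. For example, $t_1=-c_4c_5$, $t_5=(c_0-c_1c_5)c_4^{-1}c_5^{-1}$ and $t_6=c_0c_4^{-1}c_5^{-1}$. So $\mathcal{N}\circ\tau=\mathcal{N}$.

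There is also a conceptual reason. Conjugating $\dot w_0^{-1}n=mn'\overline{n}$ by $z=\alpha^\vee(-1)$ gives $\dot w_0^{-1}(znz^{-1}) = (\dot w_0^{-1}z\dot w_0)\,z^{-1}\cdot zmz^{-1}\cdots$. Here $zmz^{-1}=m$ because $z$ is central in $M$, and $\dot w_0^{-1}z\dot w_0=z^{-1}$ modulo a central factor that lies in $Z_G$ or is trivial. The coordinate check above settles this directly, so the conceptual argument is not needed.

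\textbf{Step 3: Identify the covering group.} By Corollary \ref{finiteetale1}, each fiber of $\mathcal{N}$ has exactly two points, determined by the choice of sign of $c_5$ in $c_5^2=t_4/(t_2-t_1^2t_5t_6)$, with the other $c_i$ then fixed. The involution $\tau$ is fixed-point-free on $R$ because $c_5\neq0$. Together with Step 2, this means the fiber over a point is $\{c,\tau c\}$.

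It follows that $R\to\mathcal{N}(R)$ is a Galois étale double cover with group $\{1,\tau\}\cong\mathbb{Z}/2\mathbb{Z}$ generated by $\alpha^\vee(-1)$. The statement for $\mathcal{N}:N'\to M$ then follows from the $U_M$-equivariant diagram in Corollary \ref{finiteetale2}. Because $\alpha^\vee(-1)$ commutes with $U_M$, the involution extends to $N'$ and acts as the deck transformation there.

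The main subtlety is interpretive rather than computational: the lemma's "étale fundamental group" must be read as the Galois group of this cover. The only real check is that $\tau$ is fixed-point-free and preserves $R$, both of which are immediate from the formulas.
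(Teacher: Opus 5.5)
Your proposal is correct and follows the paper's route: the paper's proof is exactly your Step~1 (the conjugation formula at $t=-1$ gives the sign change $\tau$), and it takes the $\tau$-invariance of $\mathcal{N}$ and the two-point fibres $\{c,\tau c\}$ from the proof of Corollary~\ref{finiteetale1}, which your Steps~2--3 spell out explicitly. One harmless slip: the last defining polynomial of $R$ is $\tau$-invariant, not odd, since each of its monomials has even total degree in $c_1,\dots,c_5$; the factor that changes sign is the prefactor $c_3c_5^3/c_4$ of the Jacobian.
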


\begin{proof}
   The above equality immediately implies that the adjoint action of $\alpha^\vee(-1)$ takes $(c_0,c_1,c_2,c_3,c_4,c_5)$ to $(c_0,-c_1,-c_2,-c_3,-c_4,-c_5)$, as in the proof of Corollary \ref{finiteetale1}.
\end{proof}

We may choose $t=c_5^{-1}$, a set of orbit space representatives of $Z_M^0 U_M\backslash N'$ can therefore be given by
$$\gamma_0=(122321), \gamma_1=(112321),\gamma_2=(112221),\gamma_3=(111211),\gamma_4=(011210)$$
Identify the $F$-points of the orbit space 
$Z_M^0U_M\backslash N'$ by the open dense subset of 5-dimensional affine space obtained by setting $c_5=1$ in $R$, and denote it by $R'$. To be precise,
\[R'=\{(c_0,\cdots, c_4): c_i\neq 0, c_0-c_1\neq 0, c_0(c_0-c_1)-c_2c_3c_4\neq 0, 2c_0^2(c_3c_4+c_2)-2c_2c_3^2c_4^2+c_0c_1(c_4-c_2)\neq 0\}.\]
Then the corresponding 
invariant measure is
$$d\dot{n}=\vert c_3\vert^2\vert c_4\vert^4\prod_{i=0}^4dc_i$$

\begin{cor}\label{finiteetale3}The induced map $\mathcal{N}: \dot{n}\mapsto \dot{m}$ via the Bruhat decomposition $\dot{w}_0^{-1}\dot{n}=\dot{m}\dot{n}'\overline{\dot{n}}$ on $Z_M^0U_M\backslash N'$ is an isomorphism onto its image in $M$, up to a Zariski open dense subset. 
\end{cor}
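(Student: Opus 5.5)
We derive this from Corollary \ref{finiteetale1}, Corollary \ref{finiteetale2} and Lemma \ref{etale-fundamental-group}. The idea is that the degree-two ambiguity of $\mathcal{N}$ on $U_M\backslash N'$ is exactly the action of $\alpha^\vee(-1)\in Z_M^0$. Quotienting further by $Z_M^0$ therefore removes it, and the induced map becomes injective. Concretely, we first check that $\mathcal{N}$ is compatible with the $Z_M^0$-actions. For $z=\alpha^\vee(t)$ we have $\dot w_0^{-1}(znz^{-1})=(\dot w_0^{-1}z\dot w_0)\,m n'\overline n\, z^{-1}$. Uniqueness of the factorization in $P\overline N$ then gives $\mathcal{N}(znz^{-1})=\Theta_M(z)\,\mathcal{N}(n)\,z^{-1}$, exactly as in the equivariance part of Proposition \ref{BruhatDecomp}. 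Since $Z_M^0$ is central in $M$, this is multiplication by $\Theta_M(z)z^{-1}=\dot w_0^{-1}z\dot w_0\, z^{-1}$, an element of $A_M$. Hence $\mathcal{N}$ descends to a morphism
\[
Z_M^0U_M\backslash N'\longrightarrow \mathcal{N}(N')/\{\Theta_M(z)z^{-1}\},
\]
and the quantity $\dot m$ in the statement is $\mathcal{N}(\dot n)$ evaluated at the slice representatives with $c_5=1$.

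Next we make the map explicit on $R'$. Setting $c_5=1$ in Proposition \ref{BruhatDecomp}, the image $\dot m$ is determined by the coordinates $(t_1,\dots,t_6)$ of Corollary \ref{finiteetale1}:
\[
t_1=-c_4,\qquad t_2=c_0(c_0-c_1)-c_2c_3c_4,\qquad t_3=-c_3c_4,\qquad t_4=-c_2c_3c_4,
\]
\[
t_5=(c_0-c_1)c_4^{-1},\qquad t_6=c_0c_4^{-1}.
\]
These satisfy the single relation $t_4=t_2-t_1^2t_5t_6$, which is the specialization of $c_5^2=t_4/(t_2-t_1^2t_5t_6)$ to $c_5=1$; so the image is a hypersurface of dimension $5$. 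The inversion formulas from the proof of Corollary \ref{finiteetale1} now read
\[
c_4=-t_1,\qquad c_3=t_3/t_1,\qquad c_2=t_4/t_3,\qquad c_1=-t_1(t_5-t_6),\qquad c_0=-t_1t_6.
\]
These are regular on the open set $t_1t_3\neq0$, so $\dot n\mapsto\dot m$ is injective with a regular inverse on its image. Equivalently, the sign ambiguity $c_5\mapsto -c_5$ has been killed by the normalization $c_5=1$. Lemma \ref{etale-fundamental-group} confirms that this ambiguity is generated by $\alpha^\vee(-1)$, hence exhausted by the $Z_M^0$-quotient. Combined with the étaleness already established, and after shrinking to the open subset where the Jacobian bracket and $t_2-t_1^2t_5t_6$ are nonzero, this gives an isomorphism onto a Zariski open dense subset of the image.

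The main obstacle is bookkeeping rather than substance. We must check that the normalization $c_5=1$ is a legitimate section of $Z_M^0$ on $U_M\backslash N'$ over $F$. This needs $t=c_5^{-1}$, which acts by $c_i\mapsto tc_i$ for $i\ge1$ and $c_0\mapsto t^2c_0$. We must also check that the image variety, before or after the $A_M$-translation, is identified compatibly with the target used later in the partial Bessel integrals. I would verify directly, by the explicit formulas, that two points of $R'$ with the same $(t_1,\dots,t_6)$ coincide; this is the displayed inversion. I would then verify that the inverse extends to the Zariski closure minus the degeneracy locus $t_1t_3t_4(t_2-t_1^2t_5t_6)=0$.
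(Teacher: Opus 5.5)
Your proof is essentially the paper's: restrict to the slice $c_5=1$, note the relation $t_4=t_2-t_1^2t_5t_6$, and write down an explicit regular inverse. The paper writes $c_2=(t_2-t_1^2t_5t_6)/t_3$ where you write $t_4/t_3$, and the two agree on the image; your $Z_M^0$-equivariance discussion is extra framing. One small fix: since $t_1=-c_4$ and $t_5-t_6=-c_1/c_4$, the inverse is $c_1=t_1(t_5-t_6)$, not $-t_1(t_5-t_6)$; the formula you took from Corollary \ref{finiteetale1} has this sign wrong, and the paper's proof of this corollary uses the corrected sign. This does not affect the injectivity argument.
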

\begin{proof} It is elementary to see that the map $\mathcal{N}$ on $Z_M^0U_M\backslash N'$
\[\underline{c}:=(c_0,c_1,c_2,c_3, c_4)\mapsto \underline{t}:=(t'_1,t'_2, t'_3, t'_5, t'_6)\in 
\mathbb{A}^5,\]where the  
$t'_i$'s are obtained by setting $c_5=1$ in the corresponding expression of $t_i$ in Corollary \ref{finiteetale1}, 
completely determines the image of $\mathcal{N}$. Again by the computation in Corollary \ref{finiteetale1}, it suffices to show that $t_4'$ can be determined by $t'_1,t'_2,t'_3,t'_5,t'_6$. In fact, one checks that $t_4'=t_2'-t_1'^2t_5't_6'$. By further assuming that $t_2'-t_1'^2t_5't_6'\neq 0$, the above map admits an inverse, given by
\[c_0=-t_1't_6', c_1=t_1'(t_5'-t_6'), c_2=\frac{t_2'-t_1'^2t_5't_6'}{t_3'}, c_3=\frac{t_3'}{t_1'}, c_4=-t_1'\] One also computes that the determinant of the Jacobian of the above map is $-c_3=-\frac{t_3'}{t_1'}$.
\end{proof}

\subsubsection{Partial Bessel functions}\label{partial-Bessel-function}
We keep the notation of the preceding subsections. In particular,
$P=MN$ is the maximal parabolic subgroup of the split simply connected
$E_6$-group obtained by deleting $\alpha_2$, the group $N$ is the
unipotent radical of $P$, and $\gamma_0=(122321)$ is the highest root.
Recall that $N$ is a Heisenberg group and that its center is the root
subgroup $U_{\gamma_0}$. We write
\[
\Phi_N^{\circ}=\Phi_N\setminus\{\gamma_0\}.
\]
Thus the roots of $\overline N$ are
$\Phi_N^-=-\Phi_N$, and we put
\[
\Phi_N^{-,\circ}=\Phi_N^-\setminus\{-\gamma_0\}.
\]
Let $v_F:F^\times\to \bZ$ be the additive valuation normalized by
$v_F(\varpi)=1$, extended by $v_F(0)=+\infty$, and put
\[
\mathfrak p^r=\{a\in F: v_F(a)\ge r\},\qquad r\in\bZ.
\]
For a root $\beta$, set
\[
U_{\beta,r}=x_\beta(\mathfrak p^r).
\]
We shall use the same element $x=2\rho^\vee\in X_*(A)$ as in the
Bruhat--Tits notation \cite{BT72}, but only to specify the depths of the root subgroups.
No exponential map is used below.

For $\kappa\in\bZ$, define the opposite root-depth function on $\Phi_N^-$ by
\[
\overline r_{x,\kappa}(\eta)=
\begin{cases}
-\kappa-\eta(x), & \eta\in \Phi_N^{-,\circ},\\[4pt]
-2\kappa-\eta(x), & \eta=-\gamma_0.
\end{cases}
\]
This is the root-subgroup analogue of the Moy--Prasad notation
$\overline{\mathfrak n}_{x,\kappa}$, with the central root depth doubled.
Equivalently, at the level of root lattices one may write
\[
\overline{\mathfrak n}_{x,\kappa}
=
\bigoplus_{\eta\in\Phi_N^-}\mathfrak g_{\eta,\overline r_{x,\kappa}(\eta)},
\]
where $\mathfrak g_{\eta,r}$ denotes the usual depth-$r$ root lattice.
Choose once and for all a total ordering of $\Phi_N^-$ in which
$-\gamma_0$ is placed last. Define
\[
\overline{N}_{0,\kappa}
=
\prod_{\eta\in \Phi_N^-}^{\longrightarrow}
U_{\eta,\overline r_{x,\kappa}(\eta)}
\subset \overline{N}(F),
\]
where the arrow indicates the chosen order. For the noncentral roots this
is exactly the Moy--Prasad condition $\eta(x)+r\ge -\kappa$; the central
coordinate is given twice the $\kappa$-depth. This doubling is the point
which is needed for the subgroup property.

Similarly, define
\[
U_0
=
U_M\cap G_{x,0}
=
\prod_{\alpha\in\Phi_M^+}^{\longrightarrow} U_{\alpha,-\alpha(x)}
\subset U_M(F).
\]
This is the root-subgroup description of the Moy--Prasad subgroup
$U_M\cap G_{x,0}$ \cite{PM94}.

\begin{lem}\label{lem:root-subgroup-filtration-opposite-N}
For every $\kappa\in\bZ$, the set $\overline{N}_{0,\kappa}$ is an open
compact subgroup of $\overline{N}(F)$. For $\kappa_1\le \kappa_2$, one has
$\overline{N}_{0,\kappa_1}\subset \overline{N}_{0,\kappa_2}$, and
\[
\bigcup_{\kappa\ge 0} \overline{N}_{0,\kappa}=\overline{N}(F).
\]
Moreover $\overline{N}_{0,\kappa}$ is normalized by $U_0$. Finally, if
$\alpha^\vee:F^\times\to Z_M$ is the cocharacter of Lemma
\ref{Assumption5-1}, then
\[
\alpha^\vee(t)\overline{N}_{0,\kappa}\alpha^\vee(t)^{-1}
=\overline{N}_{0,\kappa+v_F(t)}.
\]
In particular $\alpha^\vee(t)\overline{N}_{0,\kappa}\alpha^\vee(t)^{-1}$
depends only on $v_F(t)$, equivalently only on $|t|$.
\end{lem}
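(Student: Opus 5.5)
The plan is to reduce everything to a depth computation on root subgroups. Three facts about $x=2\rho^\vee$ drive the argument: $\eta(x)=2\,\mathrm{ht}(\eta)$ for every root $\eta$, $\eta(x)$ is additive in $\eta$, and $\langle\eta,\gamma_0^\vee\rangle=-1$ for $\eta\in\Phi_N^{-,\circ}$ while $\langle -\gamma_0,\gamma_0^\vee\rangle=-2$.

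\emph{Subgroup property.} Openness and compactness of $\overline N_{0,\kappa}$ as a set are clear, since it is a product of the compact open sets $U_{\eta,r}$ and the product map $\prod_{\eta}U_\eta\to\overline N$ is a homeomorphism. For closure under multiplication I use that $\overline N$ is Heisenberg with center $U_{-\gamma_0}$. The commutator of $x_{\eta}(a)$ and $x_{\eta'}(b)$ with $\eta,\eta'\in\Phi_N^{-,\circ}$ is trivial unless $\eta+\eta'=-\gamma_0$. In that case it equals $x_{-\gamma_0}(\pm ab)$. The valuation of $ab$ is at least
\[
-2\kappa-(\eta+\eta')(x)=-2\kappa+\gamma_0(x)=\overline r_{x,\kappa}(-\gamma_0),
\]
so the commutator lies in the central factor. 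Since $-\gamma_0$ is placed last, reordering a product of two ordered words only produces central terms of the allowed depth. Hence $\overline N_{0,\kappa}$ is closed under products, and inverses are handled the same way. This doubling is exactly what the definition was built for.

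\emph{Monotonicity and exhaustion.} The depths $\overline r_{x,\kappa}(\eta)$ decrease as $\kappa$ increases, which gives monotonicity. They tend to $-\infty$ as $\kappa\to\infty$, and every element of $\overline N(F)$ has finitely many coordinates, which gives exhaustion.

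\emph{Normalization by $U_0$.} Conjugating $x_\eta(a)$ by $x_\delta(b)$, with $\delta\in\Phi_M^+$ and $v_F(b)\ge-\delta(x)$, produces factors $x_{i\delta+j\eta}(\pm b^ia^j)$. Because $E_6$ is simply laced only $i,j\le1$ occur, together with the $U_M$-factor when $j=0$; but here $\eta\in\Phi_N^-$, so every new root $\delta+\eta$ lies in $\Phi_N^-$ with $j=1$. Its depth requirement is checked by additivity of $(\cdot)(x)$:
\[
v_F(ab)\ge -\delta(x)+\overline r_{x,\kappa}(\eta)=\overline r_{x,\kappa}(\delta+\eta),
\]
using the noncentral formula when $\delta+\eta\ne-\gamma_0$. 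When $\delta+\eta=-\gamma_0$ the bound is $-\kappa-\eta(x)-\delta(x)=-\kappa+\gamma_0(x)$, which is \emph{not} obviously $\ge-2\kappa+\gamma_0(x)$ for $\kappa<0$.

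This central case is the one point I expect to need care. If it fails, I will restrict to $\kappa\ge0$, which is all that is used later. Otherwise I will note that $-\gamma_0-\delta\in\Phi_N^{-,\circ}$ for $\delta\in\Phi_M^+$ happens only in the way that makes the inequality hold. I will try first to simply state the lemma's normalization claim for $\kappa\ge0$, where the inequality is immediate. Re-collecting the resulting product into the fixed order uses the subgroup property just proved.

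\emph{Conjugation by $\alpha^\vee(t)$.} Since
\[
\alpha^\vee(t)\,x_\eta(a)\,\alpha^\vee(t)^{-1}=x_\eta\bigl(t^{\langle\eta,\gamma_0^\vee\rangle}a\bigr),
\]
the depth shifts by $-v_F(t)$ on noncentral roots and by $-2v_F(t)$ on $-\gamma_0$. This is precisely $\overline r_{x,\kappa+v_F(t)}$, so the conjugate is $\overline N_{0,\kappa+v_F(t)}$, which depends only on $|t|$.
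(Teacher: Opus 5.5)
Your proof of the subgroup property, of monotonicity and exhaustion, and of the $\alpha^\vee(t)$ formula follows the paper and is correct: the Heisenberg commutator gives $x_{-\gamma_0}(\pm ab)$ with $\overline r_{x,\kappa}(\eta)+\overline r_{x,\kappa}(\eta')=\overline r_{x,\kappa}(-\gamma_0)$, and $\alpha^\vee$ acts with weights $-1$ and $-2$.

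\textbf{The gap is normalization by $U_0$.} You leave the case $\delta+\eta=-\gamma_0$ unresolved. Your fallback of restricting to $\kappa\ge 0$ would prove less than the lemma, which asserts normalization for every $\kappa\in\bZ$.

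You also overlook a second case. Your identity $-\delta(x)+\overline r_{x,\kappa}(\eta)=\overline r_{x,\kappa}(\delta+\eta)$ silently assumes $\eta\neq-\gamma_0$. Suppose $\delta-\gamma_0$ were a noncentral root. Then conjugating $x_{-\gamma_0}(a)$ by $x_\delta(b)$ would require $-2\kappa\ge-\kappa$, that is, $\kappa\le 0$. Your flagged case instead requires $\kappa\ge 0$. The two hypothetical failures thus occur in opposite ranges of $\kappa$, so no restriction on $\kappa$ can repair the argument.

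\textbf{The missing idea is the $\alpha_2$-grading.} This is the pairing $\langle\cdot,\gamma_0^\vee\rangle$ that you already list among your driving facts, and it shows both cases are vacuous.
\begin{itemize}
\item Every $\delta\in\Phi_M^+$ pairs to $0$.
\item Every $\eta\in\Phi_N^{-,\circ}$ pairs to $-1$.
\item Since $E_6$ is simply laced, $-\gamma_0$ is the only root pairing to $-2$.
\end{itemize}
Hence for $\eta\in\Phi_N^{-,\circ}$, any root $\delta+\eta$ pairs to $-1$ and lies in $\Phi_N^{-,\circ}$. So $\delta+\eta=-\gamma_0$ never happens. In particular $-\gamma_0-\delta$ is never a root, contrary to what your second fallback anticipates.

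For $\eta=-\gamma_0$, the element $\delta-\gamma_0$ pairs to $-2$ but differs from $-\gamma_0$. So it is not a root, and $U_M$ centralizes $U_{-\gamma_0}$.

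This is exactly the paper's argument. The paper notes that $U_{-\gamma_0}$ is normalized because $\gamma_0$ is the unique root of $\Phi_N$ with $\alpha_2$-coefficient $2$. It notes that conjugates of noncentral factors stay in $\Phi_N^{-,\circ}$ because $\delta$ has $\alpha_2$-coefficient $0$. With this inserted, your depth computation via additivity of $\eta\mapsto\eta(x)$ gives normalization for all $\kappa\in\bZ$.
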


\begin{proof}
Since the product map
\[
\prod_{\eta\in\Phi_N^-}^{\longrightarrow} U_{\eta}(F)
\longrightarrow \overline{N}(F)
\]
is a homeomorphism for any fixed ordering of the root subgroups, the set
$\overline{N}_{0,\kappa}$ is open and compact once it is known to be a
subgroup.

We prove the subgroup assertion by the Chevalley commutator relations.  With respect to the fixed
pinning, all Chevalley commutator constants are integers in $F$, hence have
nonnegative valuation. The group $\overline{N}$ is two-step nilpotent and
its commutator subgroup is contained in $U_{-\gamma_0}$. Thus the only
possible nontrivial commutators between root groups in $\overline{N}$ are
of the form
\[
[x_\eta(a),x_{\eta'}(b)]
=x_{-\gamma_0}(C_{\eta,\eta'}ab),
\qquad \eta,\eta'\in\Phi_N^{-,\circ},\quad
\eta+\eta'=-\gamma_0,
\]
with $C_{\eta,\eta'}\in\bZ$. If
$a\in\mathfrak p^{\overline r_{x,\kappa}(\eta)}$ and
$b\in\mathfrak p^{\overline r_{x,\kappa}(\eta')}$, then
\[
v_F(C_{\eta,\eta'}ab)
\ge \overline r_{x,\kappa}(\eta)+\overline r_{x,\kappa}(\eta')
=-2\kappa-(\eta+\eta')(x)
=-2\kappa+\gamma_0(x)
=\overline r_{x,\kappa}(-\gamma_0).
\]
Therefore all commutators produced in the collection process remain inside
$U_{-\gamma_0,\overline r_{x,\kappa}(-\gamma_0)}$, and
$\overline{N}_{0,\kappa}$ is a subgroup.

The monotonicity in $\kappa$ follows from
\[
\overline r_{x,\kappa_1}(\eta)\ge \overline r_{x,\kappa_2}(\eta)
\]
whenever $\kappa_1\le \kappa_2$, and
the union statement is immediate from the root subgroup coordinates of
$\overline{N}(F)$.

We next prove normalization by $U_0$. Let
$x_\alpha(a)\in U_{\alpha,-\alpha(x)}$, with $\alpha\in\Phi_M^+$, and let
$x_\eta(b)\in U_{\eta,\overline r_{x,\kappa}(\eta)}$, with
$\eta\in\Phi_N^-$. If $\eta=-\gamma_0$, then no root of the form
$-\gamma_0+i\alpha$, $i>0$, occurs: $\gamma_0$ is the unique root in
$\Phi_N$ with $\alpha_2$-coefficient $2$. Hence the central opposite root
subgroup is normalized. If $\eta\in\Phi_N^{-,\circ}$, then every root of the
form $\eta+i\alpha$ which occurs in the Chevalley commutator relation again
belongs to $\Phi_N^{-,\circ}$, because $\alpha$ has $\alpha_2$-coefficient
zero. For such a root we have
\[
v_F(a^i b)
\ge -i\alpha(x)-\kappa-\eta(x)
=-\kappa-(\eta+i\alpha)(x)
=\overline r_{x,\kappa}(\eta+i\alpha).
\]
Thus conjugation by each generator of $U_0$ preserves
$\overline{N}_{0,\kappa}$, and so $U_0$ normalizes
$\overline{N}_{0,\kappa}$.

It remains to check the behavior under $\alpha^\vee(t)$. The cocharacter
$\alpha^\vee$ acts with weight $-1$ on all noncentral opposite root groups
and with weight $-2$ on the central opposite root group $U_{-\gamma_0}$.
Hence
\[
\alpha^\vee(t)x_\eta(a)\alpha^\vee(t)^{-1}
=
\begin{cases}
x_\eta(t^{-1}a), & \eta\in\Phi_N^{-,\circ},\\[3pt]
x_{-\gamma_0}(t^{-2}a), & \eta=-\gamma_0.
\end{cases}
\]
Consequently
\[
U_{\eta,\overline r_{x,\kappa}(\eta)}
\longmapsto U_{\eta,\overline r_{x,\kappa}(\eta)-v_F(t)}
=U_{\eta,\overline r_{x,\kappa+v_F(t)}(\eta)},
\qquad \eta\in\Phi_N^{-,\circ},
\]
and
\[
U_{-\gamma_0,\overline r_{x,\kappa}(-\gamma_0)}
\longmapsto
U_{-\gamma_0,\overline r_{x,\kappa}(-\gamma_0)-2v_F(t)}
=U_{-\gamma_0,\overline r_{x,\kappa+v_F(t)}(-\gamma_0)}.
\]
This proves the asserted equality.
\end{proof}

Let $\pi$ be supercuspidal and put $\sigma=\xi^*(1\otimes\pi)$. Choose
$f\in C_c^\infty(M;\omega_\sigma)$, compactly supported modulo $Z_M$,
with
\[
f(zm)=\omega_\sigma(z)f(m),\qquad z\in Z_M,\ m\in M,
\]
such that
\[
W_f(m)=\int_{U_M} f(um)\psi^{-1}(u)\,du
\]
is a nonzero Whittaker function for $\sigma$. We normalize $f$ so that
$W_f(e)=1$.
For $m\in M$, put
\[
U'_{M,m}
=
\{u\in U_M: mum^{-1}\in U_M,\ \psi(mum^{-1})=\psi(u)\}.
\]
The associated Bessel function is
\[
j_\sigma(m)
=
\int_{U'_{M,m}\backslash U_M} W_f(mu)\psi^{-1}(u)\,du.
\]
It satisfies
\[
j_\sigma(u_1mu_2)=\psi(u_1u_2)j_\sigma(m),
\qquad u_1,u_2\in U_M.
\]

Let $n\in N'$ be such that the Bruhat decomposition
\[
\dot{w}_0^{-1}n=mn'\overline{n}
\]
holds, with $m\in M$, $n'\in N$, and
$\overline{n}\in\overline{N}$. Let
\[
U_{M,n}=\{u\in U_M: unu^{-1}=n\}.
\]
Then $U_{M,n}\subset U'_{M,m}$. We use the following structural input of
Sundaravaradhan \cite{Sun07}.

\begin{lem}\label{Sundaravaradhan}
For $n\in N'$ satisfying $\dot{w}_0^{-1}n=mn'\overline{n}$, one has
\[
U_{M,n}=U'_{M,m}
\]
away from a subset of measure zero in $N'$.
\end{lem}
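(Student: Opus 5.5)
The plan is to show that both groups are trivial on the open set $N'$ fixed after Corollary \ref{finiteetale2}. The complement of $N'$ in $N$ is a proper Zariski closed subset, so it has measure zero, and that is all the lemma requires. This argument does not need Sundaravaradhan's general result: in our case the action has trivial stabilizers, and the Bruhat cell of $m$ is explicit.

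\emph{Step 1: $U_{M,n}$ is trivial.} By Theorem \ref{orbit-space-representative}, $U_M$ acts freely on $N'$, and this persists after the shrinkings of $N'$ made later, since those subsets remain saturated. Hence $U_{M,n}=\{1\}$ for every $n\in N'(F)$. The inclusion $U_{M,n}\subset U'_{M,m}$ was already noted before the statement, so it suffices to prove that $U'_{M,m}=\{1\}$.

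\emph{Step 2: $m$ lies in the big cell of $M$.} Write $n=v\,\dot n(c)\,v^{-1}$ with $v\in U_M$ and $c\in R$.
\begin{itemize}
\item By Proposition \ref{BruhatDecomp}, the $M$-component $m_c$ of $\dot w_0^{-1}\dot n(c)$ equals $u_1t\dot w_Mu_2$, and every simple coroot parameter of $t$ is nonzero on $R$.
\item By the equivariance proved there, the $M$-component of $\dot w_0^{-1}n$ is $m=\Theta_M(v)\,m_c\,v^{-1}$ with $\Theta_M(v)\in U_M$; by Lemma \ref{w_0-action-on-roots}, $\mathrm{Ad}(\dot w_0^{-1})$ preserves $U_M$.
\end{itemize}
It follows that $m=u_1't\dot w_Mu_2'$ with $u_1',u_2'\in U_M$. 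So $m$ lies in $U_MA\dot w_MU_M$ for every $n\in N'$.

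\emph{Step 3: $U'_{M,m}$ is trivial.} Let $u\in U'_{M,m}$, so that $mum^{-1}\in U_M$. Conjugating by $(u_1't)^{-1}$, and using that $A$ normalizes $U_M$, we get
\[
\dot w_M\,(u_2'uu_2'^{-1})\,\dot w_M^{-1}\in U_M.
\]
Since $u_2'uu_2'^{-1}\in U_M$ and $\dot w_M U_M\dot w_M^{-1}=\overline{U}_M$ is the opposite maximal unipotent subgroup of $M$, this element lies in $U_M\cap\overline U_M=\{1\}$. Thus $u_2'uu_2'^{-1}=1$, hence $u=1$, and the character condition in the definition of $U'_{M,m}$ is not even needed. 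Therefore $U'_{M,m}=\{1\}=U_{M,n}$ on all of $N'$.

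The only point needing care is Step 2: one must check that the conditions defining $R$ are exactly those guaranteeing that $m$ lies in the big cell (namely $c_i\neq0$, $X\neq0$, $Y\neq0$), and that the equivariance transports this from the slice to the whole $U_M$-orbit. Both facts are supplied by Proposition \ref{BruhatDecomp}, so I expect no genuine obstacle. Should one prefer not to use the explicit formulas, a fallback is Sundaravaradhan's general argument \cite{Sun07}, which handles the non-free case via a dimension count on the stabilizers.
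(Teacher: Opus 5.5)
Your proof is correct, but it does not follow the paper's route. The paper proves nothing here: it quotes the statement from \cite{Sun07}, a general structural result not tailored to this $(E_6,M)$ pair, and uses it for Assumption 4.1 of \cite{Sha02}. You instead verify the lemma directly from case-specific facts already established. After shrinking $N'$ to the saturation of $R$, Proposition \ref{BruhatDecomp} and the equivariance $\mathcal{N}(vnv^{-1})=\Theta_M(v)\mathcal{N}(n)v^{-1}$ (with $\Theta_M(U_M)=U_M$) put every $m=\mathcal{N}(n)$ in the big cell $U_MA\dot w_MU_M$. For $m=u_1't\dot w_Mu_2'$, the condition $mum^{-1}\in U_M$ alone already cuts $U'_{M,m}$ down to $u_2'^{-1}U^+_{M,w_M}u_2'=\{1\}$.

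What your approach buys is a self-contained and stronger conclusion. Equality holds on all of the shrunken $N'$, not merely off a null set, and both groups are trivial. That triviality is exactly what Lemma \ref{trivial-stabilizer} later extracts from the cited lemma, so your argument removes the dependence on \cite{Sun07} in this paper. The cost is reliance on the explicit, computer-found Bruhat decomposition, whereas the cited result needs neither freeness of the action nor an explicit cell for $m$.

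Two small remarks. First, your Step 1 is redundant. The inclusion $U_{M,n}\subset U'_{M,m}$ (from equivariance and the $\psi$-compatibility of $\dot w_0$) together with Step 3 already forces $U_{M,n}=\{1\}$, so the big-cell property by itself re-proves freeness of the action on this locus. Second, your closing description of how \cite{Sun07} argues plays no role in the proof and is best omitted.
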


For $m,z$ arising from such an $n$, and for $\kappa>0$, define the partial
Bessel function with cutoff $\overline{N}_{0,\kappa}$ by
\[
j_{\sigma,\kappa}(m,z)
=
\int_{U_{M,n}\backslash U_M}
W_f(mu)\,
\mathbf 1_{\overline{N}_{0,\kappa}}(zu^{-1}\overline{n}uz^{-1})\,
\psi^{-1}(u)\,du.
\]
Here $\mathbf 1_{\overline{N}_{0,\kappa}}$ denotes the characteristic
function of $\overline{N}_{0,\kappa}$.

Now let
\[\dot{n}=
\dot{n}(\underline{c})=\prod_{i=0}^4 x_{\gamma_i}(c_i),
\qquad \underline{c}=(c_0,\ldots,c_4)\in R',
\]
be the chosen representative of $Z_M^0U_M\backslash N'$, and write
\[
\dot{w}_0^{-1}\dot{n}(\underline{c})
=\dot{m}(\underline{c})\dot{n}'(\underline{c})\overline{\dot{n}}(\underline{c}),
\qquad
\dot{m}(\underline{c})=\mathcal N(\dot{n}(\underline{c})).
\]
Let $\dot{x}_\alpha(\underline{c})\in F^\times$ be the $x_\alpha$-coordinate
of $\dot{w}_0^{-1}\overline{n}(\underline{c})\dot{w}_0$. Equivalently, by
Proposition \ref{BruhatDecomp} after setting $c_5=1$,
\[\dot{x}_\alpha=
\dot{x}_\alpha(\underline{c})
=
\frac{c_1-c_0}{c_0^2-c_0c_1-c_2c_3c_4}.
\]
Let $d$ and $g$ be the conductor of $\psi$ and $\omega_\sigma^{-1}(\dot{w}_0\omega_\sigma)$ respectively, and put
\[
z(\underline{c})
=\alpha^\vee\bigl(\varpi^{d+g}\dot{x}_\alpha(\underline{c})\bigr)
\in Z_M^0.
\]
For $\kappa$ sufficiently large, the partial Bessel function used below is
\[
j_{\sigma,\kappa}(\underline{c})
=
j_{\sigma,\kappa}(\dot{n}(\underline{c}))
:=
j_{\sigma,\kappa}\bigl(\dot{m}(\underline{c}),z(\underline{c})\bigr)
=
\int_{U_{M,\dot{n}(\underline{c})}\backslash U_M}
W_f(\dot{m}(\underline{c})u)\,
\mathbf 1_{\overline{N}_{0,\kappa}}
\bigl(z(\underline{c})u^{-1}\overline{\dot{n}}(\underline{c})u
z(\underline{c})^{-1}\bigr)
\psi^{-1}(u)\,du,
\qquad \underline{c}\in R'.
\] From now on, we will omit $\underline{c}$ to simplify the notations.

\subsubsection{Local coefficients as Mellin transform of partial Bessel functions}
By Lemma \ref{Assumption5-1}, Assumption 5.1 of \cite{Sha02} is satisfied. Thanks to the work of Sundaravaradhan \cite{Sun07}, Assumption 4.1 of \cite{Sha02} is also valid. For the convenience of the reader, let us provide the statement of Theorem 6.2 of \cite{Sha02}:
\begin{thm}Let $\sigma$ be a $\psi$-generic supercuspidal representations of $M$. Choose $f\in C^\infty_c(M;\omega_\sigma)$ such that $W_f$ is a non-zero Whittaker function of $\sigma$, normalized so that $W_f(e)=1$. Let $\widetilde{\alpha}:=\langle\rho, \alpha\rangle^{-1}\alpha$, where $\rho$ is half sum of the positive roots in $N$. $\alpha^\vee$ is the map in Lemma \ref{Assumption5-1}. Then
\[
C_\psi(s,\sigma)^{-1}=\gamma(2\langle\tilde{\alpha},\alpha^\vee\rangle s, \omega_\sigma(\dot{w}_0\omega_\sigma^{-1}), 
\psi^{-1})\int_{Z_M^0U_M\backslash N'}j_{\sigma,\kappa}(\dot{n})\omega_{\sigma_s}^{-1}(\dot{w}_0\omega_{\sigma_s})(\dot{x}_\alpha)q^{\langle s\tilde{\alpha}+\rho, H_M(\dot{m})\rangle}d\dot{n}
\]
for sufficiently large $\kappa$,
where $\dot{m}$ is the image of $\dot{n}$ via $\dot{w}_0^{-1}\dot{n}=\dot{m}\dot{n}'\bar{\dot{n}}$, which holds off a subset of measure zero on $N$. Here $d\dot{n}$ is the invariant measure on the orbit space $Z_M^0U_M\backslash N'$, $\sigma_{s}=\sigma\otimes q^{\langle s\tilde{\alpha}, H_M(\cdot)\rangle}$,  $\dot{x}_\alpha=u_{\alpha}(\dot{w}_0\bar{\dot{n}}\dot{w}_0^{-1})$, and $\gamma(2\langle\tilde{\alpha},\alpha^\vee\rangle s, \omega_\sigma(\dot{w}_0\omega_\sigma^{-1}), \psi^{-1})$ is the Abelian $\gamma$-factor depending only on $\omega_\sigma$. 
    
\end{thm}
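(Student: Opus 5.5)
This statement is Theorem 6.2 of \cite{Sha02}, quoted for our setting, so the plan is to check that its hypotheses hold here and to identify each ingredient of the general formula with the explicit objects built above. We do not reprove the general theory. First, Assumption 5.1 of \cite{Sha02} is Lemma \ref{Assumption5-1}. Assumption 4.1 is the equality $U_{M,n}=U'_{M,m}$ off a measure zero set, which is Lemma \ref{Sundaravaradhan} (Sundaravaradhan). The representative $\dot w_0$ is built from the $\dot s_i$ of Remark 5.1.3 of \cite{Sha10}, and by Lemma \ref{w_0-action-on-roots} it is compatible with $\psi$. With these inputs, the general argument of \cite{Sha02} applies verbatim.

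Next we recall the structure of that argument, to see that every object in the displayed formula is the one we constructed. One evaluates $\lambda_\psi(\dot w_0(\nu),\dot w_0(\sigma))\circ A(\nu,\sigma,\dot w_0)$ on a section $f_\kappa\in I(\nu,\sigma)$ supported in $P\overline N_{0,\kappa}$, with values determined by $f$. Then $\lambda_\psi(\nu,\sigma)(f_\kappa)$ is essentially $W_f(e)\,\mathrm{vol}(\overline N_{0,\kappa})$. On the other side, we obtain an integral over $N$ of $f_\kappa(\dot w_0^{-1}n)\psi^{-1}(n)$. We insert the Bruhat decomposition $\dot w_0^{-1}n=mn'\overline n$ of Proposition \ref{BruhatDecomp}, valid on $N'$, and unfold along $U_M$-orbits using Theorem \ref{orbit-space-representative}. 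The conjugation relation $\dot w_0^{-1}u\dot w_0$ is the $U_M$-equivariance of $\mathcal N$ proved there.

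We then split off the $Z_M^0$-direction via $\alpha^\vee$. The substitution $z=\alpha^\vee(\varpi^{d+g}\dot x_\alpha)$ together with the scaling rule $\alpha^\vee(t)\overline N_{0,\kappa}\alpha^\vee(t)^{-1}=\overline N_{0,\kappa+v_F(t)}$ from Lemma \ref{lem:root-subgroup-filtration-opposite-N} separates off a one-variable Tate integral in $\omega_\sigma(\dot w_0\omega_\sigma^{-1})$. That integral produces the abelian factor $\gamma(2\langle\tilde\alpha,\alpha^\vee\rangle s,\cdot,\psi^{-1})$. The normalization of $U_0$ and $\overline N_{0,\kappa}$ is what makes the cutoff in $j_{\sigma,\kappa}$ well defined on $U_{M,n}\backslash U_M$.

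The remaining integral over $Z_M^0U_M\backslash N'$ uses the invariant measure $d\dot n=|c_3|^2|c_4|^4\prod dc_i$, obtained from Proposition \ref{Invmeas} after setting $c_5=1$. It also uses the modulus factor $q^{\langle s\tilde\alpha+\rho,H_M(\dot m)\rangle}$. This yields the stated formula for $\kappa$ large.

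The main obstacle is the $\kappa$-independence and convergence, namely that the cutoff $\mathbf 1_{\overline N_{0,\kappa}}$ stabilizes the integral for $\kappa\gg0$. In \cite{Sha02} this follows from the subgroup and normalization properties of the filtration, which here are exactly Lemma \ref{lem:root-subgroup-filtration-opposite-N}. The central root is given doubled depth there precisely so that $\overline N_{0,\kappa}$ is a group. So we will check that each place in \cite{Sha02} using Moy--Prasad lattices only needs these listed properties, and then conclude.
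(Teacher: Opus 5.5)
Your proposal takes the same route as the paper. The paper does not reprove this result either: it quotes Theorem 6.2 of \cite{Sha02} after checking Assumption 5.1 via Lemma \ref{Assumption5-1}, Assumption 4.1 via Sundaravaradhan's result (Lemma \ref{Sundaravaradhan}), and the $\psi$-compatibility of $\dot w_0$ (Lemma \ref{w_0-action-on-roots}) for the self-associate parabolic $P$. Your recap of Shahidi's internal argument is not needed and is loose in places, though none of this affects the citation. The ``integral over $N$ of $f_\kappa(\dot w_0^{-1}n)\psi^{-1}(n)$'' is by definition $\lambda_\psi(\nu,\sigma)(f_\kappa)$ itself, since $N_1=N$ here. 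Also, the cutoff is well defined on $U_{M,n}\backslash U_M$ because $U_{M,n}$ fixes $\overline{n}$, not because of the normalization by $U_0$.
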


Let us adapt the theorem to our case.
We have $$\widetilde{\alpha}=\langle \rho, \alpha\rangle^{-1}\rho=\langle \frac{11}{2}\gamma_0, \alpha_2\rangle^{-1}(\frac{11\gamma_0}{2})=\gamma_0$$
So $\langle\widetilde{\alpha},\alpha^\vee\rangle=\langle \gamma_0,\gamma_0^\vee\rangle=2$. Moreover,
$$q^{\langle s\widetilde{\alpha}+\rho, H_M(\cdot)\rangle}=q^{\langle (s+\frac{11}{2})\gamma_0, H_M(\cdot)\rangle}=\vert \gamma_0(\cdot)\vert^{s+\frac{11}{2}}$$ By Lemma \ref{w_0-action-on-roots}, $\dot{w}_0(\alpha_i)=\alpha_i$ for all $i\neq 2$, and   $\dot{w}_0(\alpha_2)=-\gamma_1$, we obtain   $\dot{w}_0(\alpha^\vee)=\dot{w}_0(\gamma_0^\vee)=-\gamma_0^\vee$. Consequently, we also have
$$\omega_\sigma(\dot{w}_0\omega_\sigma^{-1})(\alpha^\vee(t))=\omega_\sigma(\gamma_0^\vee(t))\omega_\sigma^{-1}(\dot{w}_0\gamma_0^\vee(t)\dot{w}_0^{-1})=\omega_\pi^2(t)$$
$$\omega_{\sigma_s}^{-1}(\dot{w}_0\omega_{\sigma_s})(\alpha^\vee(t))=\omega_{\sigma_s}^{-2}(\alpha^\vee(t))=\omega_\sigma^{-2}(\alpha^\vee(t))\vert\gamma_0(\alpha^\vee(t))\vert^{-2s}=\omega_\pi^{-2}(t) \vert t\vert^{-4s}$$ where
$\sigma_s=\sigma\otimes q^{\langle s\widetilde{\alpha}, H_M(\cdot)\rangle}$.

Let $\dot{n}=\prod_{i=0}^4x_
{\gamma_i}(c_i)$ be a representative of $Z_M^0U_M
\backslash N'$ in $N'$, and decompose $\dot{w}_0^{-1}\dot{n}$ as
$$\dot{w}_0^{-1}\dot{n}=\dot{m}\dot{n}'\overline{\dot{n}}$$ There exists a unique element $\dot{x}_\alpha\in F$, independent of the expression of $\dot{n}$, such that$$\psi(\dot{w}_0^{-1}\overline{\dot{n}}\dot{w}_0)=\psi(\dot{x}_\alpha)$$ Let us be more precise to see that $\dot{x}_\alpha$ is well defined. Suppose $\dot{w}_0^{-1}\overline{\dot{n}}\dot{w}_0=\prod_{\beta\in \Phi_N^+}x_\beta(d_\beta)$, with $d_\beta\in F$. Then by definition, $\dot{x}_\alpha=d_\alpha$. From the relationship 
$$u_\gamma(x)u_\delta(y)u_{\gamma}(x)^{-1}=u_\delta(y)\prod_{i\gamma+j\delta\in \Phi_G, ,j>0}u_{i\gamma+j\delta}(c_{\gamma,\delta,i,j}x^iy^j)$$ (\cite{Springer1981LinearAG}), where $c_{\gamma,\delta, i,j}$ are structure constants uniquely determined by the roots $\gamma,\delta$, and the positive integers $i,j$, and since $\alpha$ is the minimal root with respect to the Bruhat order in $\Phi_N^+$, we see that the coefficient $d_\alpha$ for $\alpha$ does not change if we reorder the product $\prod_{\beta\in \Phi_N^+}x_\beta(d_\beta)$. From the above argument, we obtain
\begin{prop}
     Let $\sigma=\xi^*(\pi\otimes 1)$ where $\pi$ is a supercuspidal representations of $\mathrm{GL}_6(F)$, then
\begin{align*}
C_\psi(s,\sigma)^{-1}&=\gamma(4 s, \omega^2_\pi, 
\psi^{-1})\int_{Z_M^0U_M\backslash N'}j_{\sigma,\kappa}(\dot{n})\omega_\pi^{-2}(\dot{x}_\alpha)\vert \dot{x}_\alpha\vert^{-4s}\vert \gamma_0(\dot{m})\vert^{s+\frac{11}{2}}d\dot{n}
\end{align*}
for sufficiently large $\kappa$, where $\gamma(4 s, \omega^2_\pi, 
\psi^{-1})$ is the Tate $\gamma$-factor associated to $\omega_\pi^2$.
\end{prop}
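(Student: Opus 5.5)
The plan is to specialize Theorem 6.2 of \cite{Sha02}, quoted just above, term by term to our $(G,M)$-pair, so the proof is bookkeeping of the normalizations. First check the hypotheses. $\sigma=\xi^*(1\otimes\pi)$ is $\psi$-generic supercuspidal, because $\xi$ identifies $M_D$ with $\mathrm{SL}_6$ compatibly with the pinnings, and $\xi^*$ preserves cuspidality and genericity. Assumption 5.1 holds by Lemma \ref{Assumption5-1}. Assumption 4.1 holds by Sundaravaradhan's result, Lemma \ref{Sundaravaradhan}. The cutoff is $\overline N_{0,\kappa}$, which is legitimate by Lemma \ref{lem:root-subgroup-filtration-opposite-N}: it is an exhausting family of open compact subgroups, normalized by $U_0$, with the required scaling under $\alpha^\vee$. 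The quotient $Z_M^0U_M\backslash N'$ is identified with $R'$, with measure $d\dot n$, by Theorem \ref{orbit-space-representative} and Proposition \ref{Invmeas}.

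Next compute the ingredients.

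1. We have $\widetilde\alpha=\gamma_0$, since $2\rho=11\gamma_0$ and $\langle\gamma_0,\alpha_2^\vee\rangle=1$. Hence $\langle\widetilde\alpha,\alpha^\vee\rangle=\langle\gamma_0,\gamma_0^\vee\rangle=2$, and the abelian factor has argument $4s$.

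2. Next, $q^{\langle s\widetilde\alpha+\rho,H_M(\dot m)\rangle}=|\gamma_0(\dot m)|^{s+11/2}$, using $\rho=\tfrac{11}{2}\gamma_0$ as an element of $X^*(M)\otimes\mathbb R$.

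3. For the central characters, Lemma \ref{w_0-action-on-roots} gives $w_0(\gamma_0^\vee)=-\gamma_0^\vee$. Combined with $\omega_\sigma(\gamma_0^\vee(t))=\omega_\pi(t)$ (from $\xi(\gamma_0^\vee(t))=(t^2,tI_6)$ and the trivial $\mathrm{GL}_1$-component), this yields
\[
\omega_\sigma(\dot w_0\omega_\sigma^{-1})\circ\alpha^\vee=\omega_\pi^2,\qquad
\omega_{\sigma_s}^{-1}(\dot w_0\omega_{\sigma_s})\circ\alpha^\vee=\omega_\pi^{-2}|\cdot|^{-4s}.
\]
Here $|\gamma_0(\gamma_0^\vee(t))|^{s}=|t|^{2s}$ contributes twice.

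4. In Shahidi's formula the character is evaluated at $\dot x_\alpha$ through $\alpha^\vee$, so the integrand factor becomes $\omega_\pi^{-2}(\dot x_\alpha)|\dot x_\alpha|^{-4s}$.

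The step I expect to need the most care is checking that $\dot x_\alpha$ is well defined and compatible with Shahidi's convention $\dot x_\alpha=u_\alpha(\dot w_0\bar{\dot n}\dot w_0^{-1})$ versus our $\dot w_0^{-1}\bar{\dot n}\dot w_0$. For well-definedness, $\alpha=\alpha_2$ is the unique minimal root of $\Phi_N$, so by the commutator formula its coordinate does not depend on the ordering of the product. For the convention, I would use $\dot w_0^2\in A$ together with $\dot w_0(\alpha_i)=\alpha_i$ for $i\neq2$. This shows the two conjugates differ by a torus element acting on $U_{\alpha}$ through a sign at most, and any sign is absorbed because the integrand depends on $\dot x_\alpha$ only via $\omega_\pi^{2}$ and $|\cdot|$.

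Finally, $\dot x_\alpha$ is independent of the representative of the $Z_M^0U_M$-orbit in the appropriate equivariant sense, by Proposition \ref{BruhatDecomp}. Substituting items 1–4 into Theorem 6.2 gives the displayed formula for $\kappa$ large.
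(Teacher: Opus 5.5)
Your proposal is correct and is essentially the paper's argument. Like the paper, you specialize Shahidi's Theorem 6.2 using $\widetilde\alpha=\gamma_0$, $\langle\widetilde\alpha,\alpha^\vee\rangle=2$, $\rho=\tfrac{11}{2}\gamma_0$, and $w_0(\gamma_0^\vee)=-\gamma_0^\vee$ together with $\omega_\sigma\circ\gamma_0^\vee=\omega_\pi$, and you use the minimality of $\alpha_2$ in $\Phi_N$ to see that $\dot x_\alpha$ is well defined.

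Your extra step reconciling $\dot w_0\bar{\dot n}\dot w_0^{-1}$ with $\dot w_0^{-1}\bar{\dot n}\dot w_0$, which the paper passes over, is also sound. The right justification is that $w_0^2=1$ and $\dot w_0$ is a product of the $\dot s_i$, so $\dot w_0^2$ is a $2$-torsion element of $A$; the fact $\dot w_0(\alpha_i)=\alpha_i$ for $i\neq 2$ that you cite does not by itself control $\alpha_2(\dot w_0^2)$. Hence $\dot x_\alpha$ changes at most by a sign. That sign is invisible to $\omega_\pi^{-2}$ and to $|\cdot|$, and also to the cutoff through $z=\alpha^\vee(\varpi^{d+g}\dot x_\alpha)$ by Lemma \ref{lem:root-subgroup-filtration-opposite-N}.
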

 Since $\langle \gamma_0,\alpha_i\rangle=0$ for $i=1,3,4,5,6$, $\langle\gamma_0,\alpha_2^\vee\rangle=1$, and $\gamma_0(\dot{w}_M)=1$ by Lemma \ref{gamma_0(w_M)}, we get
 \[
\gamma_0(\dot{m})=\gamma_0(\dot{t})=\gamma_0(\alpha_1^\vee(-c_4)\alpha_2^\vee(c_0(c_0-c_1)-c_2c_3c_4)\alpha_3^\vee(-c_3c_4)\alpha_4^\vee(-c_2c_3c_4)\alpha_5^\vee(-c_3c_4)\alpha_6^\vee(-c_4))
 =c_0^2-c_0c_1-c_2c_3c_4.
 \] 
 Together with the invariant measure $d\dot{n}$ we derived from the previous section and the expression of $\dot{x}_\alpha$, we obtain
\begin{prop}\label{local-coeff-formula-1}
     Let $\sigma=\xi^*(\pi\otimes 1)$ where $\pi$ is a supercuspidal representations of $\mathrm{GL}_6(F)$, then
$$
C_\psi(s,\sigma)^{-1}=\gamma(4 s, \omega^2_\pi, 
\psi^{-1})\int_{(F^\times)^5}j_{\sigma,\kappa}(\underline{c})\omega_\pi^{-2}(\frac{c_1-c_0 }{c_0^2 - c_0c_1 - c_2c_3c_4})\vert \frac{c_1-c_0 }{c_0^2 - c_0c_1 - c_2c_3c_4}\vert^{-4s}$$$$\cdot\vert c_0^2-c_0c_1-c_2c_3c_4\vert^{s+\frac{11}{2}}\vert c_3\vert^2\vert c_4\vert^4\prod_{i=0}^4dc_i
$$
$$=\gamma(4 s, \omega^2_\pi, 
\psi^{-1})\int_{(F^\times)^5}j_{\sigma,\kappa}(\underline{c})\omega_\pi^{-2}(\frac{c_1-c_0 }{c_0^2 - c_0c_1 - c_2c_3c_4})\vert c_1-c_0 \vert^{-4s}$$$$\cdot\vert c_0^2-c_0c_1-c_2c_3c_4\vert^{5s+\frac{11}{2}}\vert c_3\vert^2\vert c_4\vert^4\prod_{i=0}^4dc_i
$$
for sufficiently large $\kappa$, where $\gamma(4 s, \omega^2_\pi, 
\psi^{-1})$ is the Tate $\gamma$-factor associated to $\omega_\pi^2$.
\end{prop}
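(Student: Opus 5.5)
The plan is to specialize the preceding Proposition, which expresses $C_\psi(s,\sigma)^{-1}$ as an integral over $Z_M^0U_M\backslash N'$, to the explicit coordinates $\underline{c}=(c_0,\ldots,c_4)\in R'$. Two pieces are already available. First, by the discussion after the normalization $c_5=1$, the orbit space $Z_M^0U_M\backslash N'$ is identified with $R'$, and the invariant measure there is $d\dot n=|c_3|^2|c_4|^4\prod_{i=0}^4dc_i$. Second, the factor $\omega_{\sigma_s}^{-1}(\dot w_0\omega_{\sigma_s})(\dot x_\alpha)$ has already been rewritten as $\omega_\pi^{-2}(\dot x_\alpha)|\dot x_\alpha|^{-4s}$, and the power of $q$ as $|\gamma_0(\dot m)|^{s+\frac{11}{2}}$. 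So it remains to insert the explicit values of $\dot x_\alpha$ and $\gamma_0(\dot m)$ in terms of $\underline{c}$.

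For $\dot x_\alpha$, I take the coefficient of $x_{\alpha_2}$ computed in Proposition \ref{BruhatDecomp} and set $c_5=1$. This gives $\dot x_\alpha=(c_1-c_0)/(c_0^2-c_0c_1-c_2c_3c_4)$, and I would remark that this coordinate is independent of the ordering, since $\alpha_2$ is minimal in $\Phi_N$. For $\gamma_0(\dot m)$, write $\dot m=u_1\dot t\dot w_Mu_2$. The character $\gamma_0$ of $M$ is trivial on $U_M$, and $\gamma_0(\dot w_M)=1$ by Lemma \ref{gamma_0(w_M)}. Moreover $\langle\gamma_0,\alpha_i^\vee\rangle=0$ for $i\ne2$ and $\langle\gamma_0,\alpha_2^\vee\rangle=1$. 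Hence $\gamma_0(\dot m)$ equals the $\alpha_2^\vee$-parameter of $\dot t$, namely $c_0^2-c_0c_1-c_2c_3c_4$.

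Substituting these gives the first displayed formula. The integral is over $R'$, but since the complement of $R'$ in $(F^\times)^5$ has measure zero, I may write it as an integral over $(F^\times)^5$. For the second formula, I split $|\dot x_\alpha|^{-4s}=|c_1-c_0|^{-4s}\,|c_0^2-c_0c_1-c_2c_3c_4|^{4s}$ and combine the second factor with $|\gamma_0(\dot m)|^{s+11/2}$, which produces the exponent $5s+\frac{11}{2}$.

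The only genuinely delicate point is a compatibility check. The representatives $\dot n(\underline c)$ with $c_5=1$ must be the ones used to define $j_{\sigma,\kappa}(\underline c)$, including the choice of $z(\underline c)$, and the measure on $Z_M^0U_M\backslash N'$ must really be the quotient of the $U_M\backslash N'$ measure by $d^\times t$. For the latter, I would rescale by $\alpha^\vee(c_5)$: this multiplies $c_0$ by $c_5^2$ and the other $c_i$ by $c_5$. Then $|c_3|^2|c_4|^4|c_5|^9\prod_i dc_i$ factors as $|c_3|^2|c_4|^4\prod_{i\le4}dc_i\cdot|c_5|^{k}\,dc_5$ for some exponent $k$, and I would check that $|c_5|^{k}\,dc_5$ is a constant multiple of $d^\times c_5$. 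Any constant can be absorbed into the normalization, as in Theorem 6.2 of \cite{Sha02}.
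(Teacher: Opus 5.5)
Your main computation is the paper's own argument. You take $\dot x_\alpha$ from Proposition \ref{BruhatDecomp} at $c_5=1$. You get $\gamma_0(\dot m)=\gamma_0(\dot t)=c_0^2-c_0c_1-c_2c_3c_4$ from Lemma \ref{gamma_0(w_M)} and $\langle\gamma_0,\alpha_i^\vee\rangle=\delta_{i2}$. You insert $d\dot n=|c_3|^2|c_4|^4\prod_{i=0}^4dc_i$ and merge $|\dot x_\alpha|^{-4s}$ with $|\gamma_0(\dot m)|^{s+\frac{11}{2}}$ to get the exponent $5s+\frac{11}{2}$. That part is fine.

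The step that would fail is the measure check you single out as the delicate point. Write $c_0=c_5^2c_0'$ and $c_i=c_5c_i'$ for $1\le i\le 4$. The Jacobian of this substitution is $|c_5|^6$, and $|c_3|^2|c_4|^4|c_5|^9$ contributes another $|c_5|^{15}$. Hence
\[
|c_3|^2|c_4|^4|c_5|^9\prod_{i=0}^5dc_i=|c_3'|^2|c_4'|^4\prod_{i=0}^4dc_i'\cdot|c_5|^{21}\,dc_5 .
\]
So $k=21$, and $|c_5|^{21}dc_5=|c_5|^{22}\,d^\times c_5$ is not a constant multiple of $d^\times c_5$. No normalization can absorb this factor. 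It is $\delta_P(\alpha^\vee(c_5))$, since $\langle 2\rho_P,\gamma_0^\vee\rangle=\langle 11\gamma_0,\gamma_0^\vee\rangle=22$. Indeed, $Z_M^0$-conjugation scales the measure on $U_M\backslash N'$ by $\delta_P$. Consequently there is no measure on $Z_M^0U_M\backslash N'$ whose product with $d^\times t$ gives the measure $|c_3|^2|c_4|^4|c_5|^9\prod dc_i$.

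The correct reason the stated $d\dot n$ is right is that Shahidi's reduction from $U_M\backslash N'$ to $Z_M^0U_M\backslash N'$ decomposes the measure as $dn=\delta_P(z)\,d^\times z\,d\dot n$. Take $n=z\dot nz^{-1}$ with $z=\alpha^\vee(t)$. Then $\mathcal N(n)=\Theta_M(z)\dot mz^{-1}=z^{-2}\dot m$, so
\[
|\gamma_0(\mathcal N(n))|^{s+\frac{11}{2}}=|t|^{-4s-22}\,|\gamma_0(\dot m)|^{s+\frac{11}{2}}.
\]
The $|t|^{-22}$ is cancelled exactly by $\delta_P(z)=|t|^{22}$. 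What remains is $|t|^{-4s}$, which goes into the Tate integral producing $\gamma(4s,\omega_\pi^2,\psi^{-1})$. With fibre measure $d^\times t$ instead, the factor $|t|^{-22}$ would survive in the $Z_M^0$-integral.

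So your final formula is correct. However, the verification you propose for the measure would come out negative. It should be replaced by this $\delta_P$ bookkeeping, which the paper itself leaves implicit.
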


\subsection{Asymptotics of partial Bessel functions}
To prove analytic stability, a crucial step is to obtain an asymptotic expansion for the partial Bessel functions appearing in the local coefficient formula. As we see in the previous section, these functions are defined using the Bruhat decomposition $\dot{w}_0^{-1}n = mn'\overline{n}$. By Corollary \ref{finiteetale1}, the map $n \mapsto m$ is finite \'etale onto its image when passing to the quotient $U_M\backslash N'$, which allows us to connect the partial Bessel functions to partial Bessel integrals. The latter are functions on $M$ admitting asymptotic expansions in terms of Bruhat cells of $M$, and do not depend on the decomposition $\dot{w}_0^{-1}n=mn'\overline{n}$.

This shift in perspective is advantageous: the partial Bessel integrals, when restricted to the Bruhat cells of $M$, possess well-behaved asymptotic expansions. It is this key property that ultimately enables the proof of analytic stability.

\subsubsection{Partial Bessel integrals}\label{partial-Bessel-integral}Throughout this subsection, we adopt the convention that $M$ denotes both a split connected reductive group over $F$ and its group of $F$-points. We will specify the intended meaning only if the context is ambiguous.

Given a supercuspidal representation $\sigma$ of $M$, choose $f\in C^\infty_c(M;\omega_\sigma)$ and consider the Whittaker model $W_f$, normalized so that $W_f(e)=1$ as before. Let $\Theta_M\in\mathrm{Aut}(M)$ be an $F$-involution on $M$, fixing the splitting $(B_M, A , \{x_\alpha\}_{\alpha\in\Delta_M})$, where we fix a Borel subgroup $B_M=AU_M$ defined over $F$ with $A$ the maximal (split) torus and $U_M$ the unipotent radical of $B_M$, and $\Delta_M$ is the set of simple roots. Assume that $\Theta_M$ is compatible with $\psi$, i.e., $\psi(\Theta_M(u))=\psi(u)$ for all $u\in U_M$. Define the partial Bessel integral as
\begin{align*}
B^M_\varphi(m,f):&=\int_{U_{M,m}^{\Theta_M}\backslash U_M}W_f(mu)\varphi(\Theta_M(u^{-1})m'u)\psi^{-1}(xu)du \\&=\int_{U_{M,m}^{\Theta_M}\backslash U_M}\int_{U_M}f(xmu)\varphi(\Theta_M(u^{-1})m'u)\psi^{-1}(xu)dxdu
\end{align*}
where $U_{M,m}^{\Theta_M}=\{m\in U_M: \Theta_M(u^{-1})mu=m\}$ is the twisted centralizer of $m$ in $U_M$, $\varphi$ is the characteristic function of certain subset of $M(F)$, assumed to be invariant under translation by elements of the center with absolute value 1.  $m'$ is a representative in $M$ of $m$ under the action of $ Z_M$ by $m\mapsto \Theta_M(z)mz^{-1}$, $z\in Z_M$. Note that since $\Theta_M$ preserves $\Delta_M$, it also preserves the center $Z_M$ of $M$, as $Z_M=\cap_{\alpha\in \Delta_M}\ker \alpha$. Suppose $m=\Theta_M(z_1)m'_1z_1^{-1}=\Theta_M(z_2)m'_2z_2^{-1}$, then $\Theta_M(z_1)z_1^{-1}\Theta_M(z_2^{-1})z_2=m_2'm_1'^{-1}\in Z_M\cap M_D$, thus $m_2'=m_1'\zeta$ for some $\zeta\in Z_M\cap M_D$. Since $Z_M\cap M_D$ is finite, $\zeta^n=1$ for some $n\in \mathbb{N}$, thus $\vert \zeta\vert =1$, and $\varphi(\Theta_M(u)^{-1}m'_2u)=\varphi(\zeta\Theta_M(u)^{-1}m'_1u)=\varphi(\Theta_M(u)^{-1}m_1'u)$ by our assumption on $\varphi$. Let us show that the integral is well-defined. Replace $u$ by $u'u$ with $u'\in U_{M,m}^{\Theta_M}$, since $\Theta_M$ preserves $Z_M$, $U_{M,m}^{\Theta_M}=U_{M,m'}^{\Theta_M}$. Therefore 
$\varphi(\Theta_M((u'u)^{-1})m'u'u)=\varphi(\Theta_M(u^{-1})m'u)$. The inner integral becomes $$\int_{U_M}f(xmu'u)\varphi(\Theta_M(u^{-1}m'u))\psi^{-1}(xu'u)dx=\int_{U_M}f(x\Theta_M(u')mu)\varphi(\Theta_M(u^{-1})m'u)\psi^{-1}(xu'u)dx$$
$$=\int_{U_M}f(xmu)\varphi(\Theta_M(u^{-1})m'u)\psi^{-1}(x\Theta_M(u')^{-1}u'u)dx=\int_{U_M}f(xmu)\varphi(\Theta_M(u^{-1})m'u)\psi^{-1}(xu)dx$$ where the last equality follows from the compatibility of $\psi$ and $\Theta_M$. Moreover, as $U_MmU_M$ is closed in $M$ and $f\in C^\infty_c(M;\omega_\sigma)$, the integral converges for any $m\in M$.

One can define partial Bessel integrals on any Levi subgroup $L$ of $M$ by an $F$-involution $\Theta_L$ of $L$ in the same fashion. Denote the Weyl group of $M$ (resp. $L$) by $W(M)$(resp. $W(L)$), and $w_M$(resp. $w_L$) the longest element in $W(M)$(resp. $W(L)$). The following terms are used to study general properties of partial Bessel integrals: 
\begin{itemize}
    \item {\textbf{\textrm{B(M)}.}} Following \cite{Cog08}, the subset of Weyl group elements that supports Bessel functions is given by $B(M)=\{w\in W(M): \alpha\in \Delta_M \ \ s.t. \ \ w\alpha>0 \Rightarrow w\alpha\in \Delta_M \}$. There is a bijection $$B(M)\leftrightarrow \{L: \textrm{ Levi\ of\ standard\ parabolic\ subgroups \ of\  } M\}$$ by
    $w\mapsto L=Z_M(\cap_{\alpha\in\theta^+_{M,w}} \ker\alpha)$, where  $\theta_{M,w}^+=\{\alpha\in \Delta_M: w\alpha>0\}$, and conversely
$L \mapsto w=w_M w_L^{-1}$.

\item
$\textbf{U}_{M,w}^+, \textbf{U}_{M,w}^-.$ For each $w\in W(M)$, define
$$U_{M,w}^+=\{u\in U_M: wuw^{-1}\in U_M\}=U_M\cap w^{-1}U_M w$$
$$U_{M,w}^-=\{u\in U_M:wuw^{-1}\in \overline{U_M}\}=U_M\cap w^{-1}\overline{U_M}w$$ where $\overline{U_M}$ is the opposite of $U_M$. Then $U_M=U^+_{M,w}U^-_{M,w}$. Note that if $w=w_L$, then
$U_{M,w_L}^+=N_L, U_{M,w_L}^-=U_L$; if $w=w_M$, then
$U_{M,w_M}^+=\{e\}, U_{M,w_M}^-=U_M$.
In particular, if $w\in B(M)$ with $\dot{w}=\dot{w}_M\dot{w}_L^{-1}$, then $U_{M,w}^+=U_L:=U_M\cap L$,  and $U_{M,w}^-=N_L$ is the unipotent radical of the standard parabolic subgroup of $M$ with Levi component $L$, therefore $U_{M,w}^+$ normalizes $U_{M,w}^-$. Moreover,
$$w U_{M,w}^+w^{-1}=U_{M,w^{-1}}^+,\ w U_{M,w}^-w^{-1}=\overline{U_{M,w^{-1}}^-}$$
Note that if $w=w_M w_L^{-1}\in B(M)$, so is $w^{-1}$. Suppose $w^{-1}=w_M w_{L'}^{-1}$ for some Levi component $L'$ of some standard parabolic subgroup of $M$, then $w_{L'}=w_Mw_L^{-1} w_M$. Consequently, $U_{M,w^{-1}}^+=U_{L'}$, $U_{M,w^{-1}}^-=N_{L'}$.
\item\textbf{Bessel distance}. For $w,w'\in B(M)$ with $w>w'$ define
$$d_B(w,w')=max\{m: \exists w_i\in B(M)\ \ s.t \ \ w=w_m>w_{m-1}>\cdots>w_0=w' \}.$$

\item \textbf{Bruhat order}. For $w,w'\in W(M)$, $w\leq w'\Longleftrightarrow C(w)\subset \overline{C(w')}$, where $C(w)=U_M T_M w U_{M,w}^{-}$ is the Bruhat cell for $w$. Denote by $\Omega_{w}=\bigsqcup_{w\le w'}C(w')$. Then $\Omega_w$ is open in $M$ and $C(w)$ is closed in $\Omega_w$, by Lemma 5.2 \cite{CST17}. 

\item\textbf{The relevant torus $\textbf{A}_w$}.
For $w\in B(M)$, define $A_w=\{a\in A: a \in \cap_{\alpha\in \theta_{M,w}^+}\ker \alpha\}^\circ\subset A$. Note that it is also the connected center of $L_w=Z_M(\cap_{\alpha\in \theta_{M,w}^+}\ker \alpha)$.

\item \textbf{The relevant Bruhat cell} $\textbf{C}_{r}(\dot{w})$. We call $C_r(\dot{w})=U_M\dot{w}A_wU_{M,w}^-$ the relevant part of the Bruhat cell $C(w)$. Note that $C_r(\dot{w})$ depends on the choice of the representative $\dot{w}$ of $w$. By Proposition 5.1 of \cite{CST17}, we can choose $\dot{w}$ so that it is compatible with $\psi$, i.e., $\psi(\dot{w}u\dot{w}^{-1})=\psi(u)$ for all $u\in U_M$.
\item \textbf{Transverse tori}. For $w,w'\in B(M)$, let $L=L_w$ and $L'=L_{w'}$ be their associated Levi subgroups respectively. Suppose $w'\leq w$. Then $L\subset L'$ and $A_{w'}\supset A_w$. Set $A^{w'}_w=A_w\cap L'_{der}=Z_L\cap L'_{der}$, then $A^{w'}_w\cap A_{w'}=A^{w'}_{w'}=Z_{L'}\cap L'_{der}$ is finite and the subgroup $A^{w'}_w A_{w'}\subset A_w$ is open and of finite index.
\end{itemize}

The following two assumptions are needed in the proof of some general properties of partial Bessel integrals, and we will verify them later in our case.

\begin{assumption}\label{involution}For each Levi component $L$ of a standard parabolic subgroup of $M$, suppose there exists an $F$-involution $\Theta_L$ of $L$ preserving the $F$-splitting $\{B_L:=B_M\cap L, A, \{x_\alpha\}_{\alpha\in \Delta_L}\}$, compatible with $\psi$, i.e., $\psi(\Theta_L(u))=\psi(u)$ for all $u\in U_L$, such that $$\Theta_M(u)_+=\dot{w}\Theta_L(u^+)\dot{w}^{-1} ,\ \forall u\in U_{M,m}^{\Theta_M},$$ where $w=w_Mw_L^{-1}\in B(M)$ with its representative $\dot{w}=\dot{w}^M_L:=\dot{w}_M\dot{w}_L^{-1}$ chosen to be compatible $\psi$, and $\Theta_M(u)=\Theta_M(u)_+\Theta_M(u)_-$ with $\Theta_M(u)_\pm\in U_{M,w^{-1}}^\pm$.
\end{assumption}

\begin{assumption}\label{conjugate-invariance}
     $\varphi$ is invariant under the $\Theta_M$-twisted conjugate action by some open compact subgroup $U_0$ of $U_M$, i.e. $\varphi(\Theta_M(u^{-1})mu)=\varphi(m)$ for all $u\in U_0$.
\end{assumption}

\begin{lem}\label{Twistedcentralizer}
    Let $H\subset L\subset M$ be Levi subgroups of $M$, then for any $m=u_1\dot{w}^{M}_H au_2\in C_r(\dot{w}^M_H)$ satisfying Assumption \ref{involution}, we have
    $$U^{\Theta_{M}}_{M,m}=\Theta_M(u_1^-)\Theta_M(U^{\Theta_{L^w}}_{L^w,\Theta_{L^w}(l^{\dot{w}})})\Theta_M(u_1^-)^{-1}\cap (u_2^-)^{-1} U^{\Theta_L}_{L,l}u_2^-$$
    where $u_1=u_1^-u_1^+$ with $u_1^-\in U_{M,w^{-1}}^-$ and $u_1^+\in U_{M,w^{-1}}^+$, $u_2=u_2^+u_2^-$ with $u_2^+\in U_{M,w}^+$ and $u_2^-\in U_{M,w}^{-}$, $l=\dot{w}^{-1}u_1^+\dot{w} \dot{w}^L_Ha u_2^+\in L$ in which $w=w^M_L:=w_{M}w_L^{-1}\in B(M)$, $\dot{w}=\dot{w}_M\dot{w}_L^{-1}$, $L^w=wL w^{-1}$, $l^{\dot{w}}=\dot{w}l\dot{w}^{-1}$, $\Theta_{L^w}=\mathrm{Ad}(\dot{w})\circ\Theta_L\circ\mathrm{Ad}(\dot{w}^{-1})$ is an $F$-involution on $L^w$ preserving the corresponding splitting.
\end{lem}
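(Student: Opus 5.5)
The plan is to reduce the twisted centralizer condition in $M$ to two twisted centralizer conditions in the Levi subgroups $L$ and $L^w$. The tool is the parabolic decompositions attached to $w=w^M_L$ and $w^{-1}$.

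\textbf{Step 1: rewrite $m$.} Since $H\subset L$, we have $w^M_H=w^M_L w^L_H$. With the $\psi$-compatible representatives fixed in the definition of the relevant cells, $\dot w^M_H=\dot w\,\dot w^L_H$. Writing $u_1=u_1^-u_1^+$ and $u_2=u_2^+u_2^-$ gives
\[
m=u_1^-\,\dot w\,\bigl(\dot w^{-1}u_1^+\dot w\,\dot w^L_H a\,u_2^+\bigr)\,u_2^-=u_1^-\,\dot w\, l\, u_2^- .
\]
Here $l\in L$, because $\dot w^{-1}U^+_{M,w^{-1}}\dot w=U^+_{M,w}=U_L$ and $u_2^+\in U_L$. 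Moreover $u_1^-\in U^-_{M,w^{-1}}=N_{L'}$ and $u_2^-\in N_L$.

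\textbf{Step 2: uniqueness of the decomposition.} I will use that the multiplication map
\[
N_{L'}\times \dot w L\times N_L\to M
\]
is injective, indeed an open immersion onto the union of Bruhat cells $C(w'')$ with $w''\ge w$. This follows from $\dot w N_L\dot w^{-1}=\overline{N_{L'}}$, $\dot w L\dot w^{-1}=L'$, and the uniqueness of the decomposition $N_{L'}L'\overline{N_{L'}}$ in the big cell of $P_{L'}$.

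\textbf{Step 3: split $u$ and match components.} Let $u\in U_M$ and write $u=u^+u^-$ with $u^\pm\in U^\pm_{M,w}$. The defining equation $\Theta_M(u^{-1})mu=m$ becomes
\[
\Theta_M(u)\,u_1^-\,\dot w\, l\, u_2^- \;=\; u_1^-\,\dot w\, l\, u_2^-\, u .
\]
On the right, put $u'=u_2^-u(u_2^-)^{-1}=u'^{+}u'^{-}$. Since $U_L$ normalizes $N_L$, this rewrites as $u_1^-\dot w\,(l u'^{+})\,\bigl(\text{element of }N_L\bigr)$.

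On the left, put $v=(u_1^-)^{-1}\Theta_M(u)u_1^-=v_-v_+$ with $v_\pm\in U^\pm_{M,w^{-1}}$. The factor $v_+$ moves through $\dot w$ into $L$ as $\dot w^{-1}v_+\dot w\in U_L$.

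Comparing components via Step 2 gives three things:
\begin{itemize}
\item[(a)] The $N_{L'}$-components force $v_-=1$, so $v\in U^+_{M,w^{-1}}=U_{L^w}$.
\item[(b)] The $L$-components give $(\dot w^{-1}v\dot w)\,l=l\,u'^{+}$.
\item[(c)] The $N_L$-components give a compatibility between the $N_L$-parts.
\end{itemize}

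I then apply Assumption \ref{involution}, $\Theta_M(\cdot)_+=\dot w\Theta_L((\cdot)^+)\dot w^{-1}$, to identify $\dot w^{-1}v\dot w$ with $\Theta_L(u'^{+})$ up to the conjugations by $u_1^-$ and $u_2^-$.

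\textbf{Step 4: read off the two conditions and prove the converse.} Relation (b) together with (c) says that $u_2^-u(u_2^-)^{-1}$ lies in $U^{\Theta_L}_{L,l}$. This forces $u'^{-}=1$ from the $N_L$-comparison, which is what makes $u'$ land in $U_L$. Transporting (a)–(b) by $\mathrm{Ad}(\dot w)$ says that $\Theta_M^{-1}(v)$ lies in $U^{\Theta_{L^w}}_{L^w,\Theta_{L^w}(l^{\dot w})}$, i.e. $u\in \Theta_M(u_1^-)\Theta_M(U^{\Theta_{L^w}}_{L^w,\Theta_{L^w}(l^{\dot w})})\Theta_M(u_1^-)^{-1}$. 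Here I use $\Theta_M^2=1$, and $\Theta_{L^w}$ is an involution of $L^w$ preserving the splitting. Conversely, if $u$ lies in both sets, substituting back reconstructs the three components, so the equation holds. This gives equality rather than just inclusion.

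\textbf{Main obstacle.} The delicate point is Step 3. Assumption \ref{involution} is stated only for $u\in U^{\Theta_M}_{M,m}$, so it cannot be used freely in the converse direction. There I would first show directly that the $N_{L'}$- and $N_L$-comparisons force the unipotent parts to lie in $U_{L^w}$ and $U_L$; on those subgroups $\Theta_M$ restricts compatibly with $\Theta_L$ and $\Theta_{L^w}$. The other point needing care is keeping track of whether $u_1^-$ and $u_2^-$ conjugate before or after applying $\Theta_M$, so that the two pieces match exactly the expression in the statement.
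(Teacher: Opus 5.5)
Your forward inclusion $U^{\Theta_M}_{M,m}\subseteq(\cdots)$ is sound and is essentially the paper's argument. Both rest on uniqueness in $\Omega_w\simeq U^-_{M,w^{-1}}\times\dot wL\times U^-_{M,w}$ plus Assumption~\ref{involution} applied to the $L$-component. The paper inserts $\dot w\Theta_L(u^+)\dot w^{-1}$ before comparing, so it needs the Assumption from the outset. You compare first, which shows that (a) and (c) hold with no assumption. Two corrections:
\begin{itemize}
\item The identification is exact, not ``up to conjugation''. Since $U^-_{M,w^{-1}}$ and $U^-_{M,w}$ are normal in $U_M$, you have $v=v_+=\Theta_M(u)_+$ and $u'=u'^{+}=u^+$.
\item It is $v$ itself, not $\Theta_M^{-1}(v)$, that lies in $U^{\Theta_{L^w}}_{L^w,\Theta_{L^w}(l^{\dot w})}$, because $u=\Theta_M(u_1^-)\Theta_M(v)\Theta_M(u_1^-)^{-1}$.
\end{itemize}

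The genuine gap is the converse, and it cannot be closed. For $u$ in the right-hand side you get (a), (c), $v\in U^{\Theta_{L^w}}_{L^w,\Theta_{L^w}(l^{\dot w})}$ and $u'\in U^{\Theta_L}_{L,l}$ as separate facts. But (b) needs the coupling $\dot w^{-1}v\dot w=\Theta_L(u')$, i.e.\ $\Theta_M(u)_+=\dot w\Theta_L(u^+)\dot w^{-1}$ at this particular $u$. The Assumption only gives this on $U^{\Theta_M}_{M,m}$, and ``$\Theta_M$ restricts compatibly on $U_L$, $U_{L^w}$'' does not supply it.

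In fact the stated equality fails. Take $M=\mathrm{GL}_4$ and $L=\mathrm{GL}_2\times\mathrm{GL}_2$, so $w$ swaps the blocks and $L^w=L$. Take $\Theta_M$ trivial on $U_M$ and $\Theta_L=1$ (the choices the paper itself makes), and $m=\dot wa$ with $a\in Z_L$. The Assumption holds. The right-hand side is $U_L\cap U_{L^w}=U_L$, which is two-dimensional. But $U^{\Theta_M}_{M,m}=\{u\in U_L:\dot w^{-1}u\dot w=u\}$ is one-dimensional, exactly as Corollary~\ref{wa-descent-formula} computes. The Levi of type $A_2\times A_2$ in the paper's $M$ behaves the same way.

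The paper's proof has the same defect: its condition (1) ties $u$ to $u^+$, and that tie is dropped when the intersection is written down. What both arguments actually prove is
\[
U^{\Theta_M}_{M,m}=\bigl\{(u_2^-)^{-1}xu_2^-:\ x\in U^{\Theta_L}_{L,l},\ \Theta_M\bigl((u_2^-)^{-1}xu_2^-\bigr)=u_1^-\dot w\,\Theta_L(x)\,\dot w^{-1}(u_1^-)^{-1}\bigr\}.
\]
This set is contained in the stated intersection. Its inclusion $\supseteq$ follows by direct substitution, with no assumption needed.
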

\begin{proof} $$\Theta_M(u)^{-1}mu=u\Leftrightarrow (\Theta_M(u)^{-1}u_1^-\dot{w}\Theta_L(u^+) \dot{w}^{-1})\dot{w}(\Theta_L(u^+)^{-1}l u^+)((u^+)^{-1}u_2^-u^+ u^-)=u_1^-\dot{w}l u_2^-.$$
Write $\Theta_M(u)=\Theta_M(u)_-\Theta_M(u)_+$, with $\Theta_M(u)_{\pm}\in U_{M,w^{-1}}^{\pm}$. By assumption \ref{involution}, 
$$\Theta_M(u)^{-1}u_1^-\dot{w}\Theta_L(u^+) \dot{w}^{-1}=\Theta_M(u)_-^{-1}\Theta_M(u)_+^{-1}u_1^-\Theta_M(u)_+\in U_{M,w^{-1}}^-, $$ we also have that
$$\Theta_L(u^+)^{-1}lu^+\in L,\ (u^+)^{-1}u_2^-u^+u^-\in U_{M,w}^{-}$$ By uniqueness of the decomposition $\Omega_w=U_{M,w^{-1}}^-\times \dot{w}L\times U_{M,w}^-$, we obtain that
\begin{equation}\label{(1)}
    \Theta_M(u)^{-1}u_1^-\dot{w}\Theta_L(u^+) \dot{w}^{-1}=u_1^-, 
\end{equation}
\begin{equation}\label{(2)}
    \Theta_L(u^+)^{-1}lu^+=l,
\end{equation}
\begin{equation}\label{(3)}
     (u^+)^{-1}u_2^-u^+u^-=u_2^-. 
\end{equation}
Note that (\ref{(2)}) is equivalent to $u^+\in U_{L,l}^{\Theta_L}$. Let $u_+=\dot{w}u^+\dot{w}^{-1}\in U_{L^w}$, then this is equivalent to $u_+\in U_{L^w,l^{\dot{w}}}^{\Theta_{L^w}}$ where $l^{\dot{w}}=\dot{w}l\dot{w}^{-1}$. On the other hand,
(\ref{(1)}) is equivalent to 
\begin{equation*}\label{(1)'}
u=\Theta_M(u_1^-)\Theta_M(\dot{w}\Theta_L(u^+)\dot{w}^{-1})\Theta_M(u_1^-)^{-1}=\Theta_M(u_1^-)\Theta_M(\Theta_{L^w}(u_+))\Theta_M(u_1^-)^{-1} 
\end{equation*} (\ref{(3)}) is equivalent to \begin{equation*}\label{(3)'}
    u=(u_2^-)^{-1}u^+ u_2^-
\end{equation*} The statement of the lemma follows immediately from these equalities.
\end{proof}


Lemma \ref{Twistedcentralizer} characterizes the relationship between the $\Theta_M$-twisted centralizer of any element $m=u_1^-\dot{w}^M_Ll u_2^-$ in the relevant cell $C_r(\dot{w}^M_H)$ with the $\Theta_L$-twisted centralizer of $l\in L$. This property allows one to deduce the descent formula for partial Bessel integrals.
\begin{lem}\label{Omega_w-surjectivity}Suppose $w=w_Mw_L^{-1}\in B(M)$ for some Levi subgroup $L$ of $M$, then \[\Omega_w\simeq U_{M,w^{-1}}^-\times \dot{w}L\times U_{M,w}^-\] and the decomposition is unique. Moreover, the map \[C^\infty_c(\Omega_w;\omega_\sigma)\rightarrow C^\infty_c(L; \omega_\sigma)\]
\[f\mapsto h_f:=\int_{U_{M,w}^-}\int_{U_{M,w^{-1}}^-}f(x^-\dot{w}lu^-)\psi^{-1}(x^-u^-)dx^-du^-\] is surjective.
\begin{proof} See Lemma 5.9 \cite{CST17}. \end{proof}    
\end{lem}
\begin{prop}\label{Descentformula} Suppose $H\subset L\subset M$ are Levi subgroups of $M$, and $\varphi$ satisfies Assumption \ref{conjugate-invariance}. Let $w=w^M_L\in B(M)$. Then for any fixed $m\in C_r(\dot{w}^M_H)$ satisfying Assumption \ref{involution}, decompose $m$ as $m=u_1^- \dot{w} l u_2^-$, with $u_1^-\in U_{M,w^{-1}}^-$, $u_2^-\in U_{M,w}^-$, $l\in L$, we have 
$$B^M_\varphi(m,f)=\psi(u_1^-u_2^-)B^L_\varphi(u_1^-, u_2^-, l,h_f)$$
where 
\[
B_\varphi^L(u_1^-,u_2^-, l,h_f)=\int_{U_{L,l, n_0}^{\Theta_L}\backslash U_L}\int_{U_L}h_f(x'l u')\varphi(\Theta_M(u')^{-1}\Theta_M(n_0)\dot{w}l' u' )\psi^{-1}(x'u')dx'du'\]
in which $n_0=u_2^-\Theta_M(u_1^-)$, $U_{L, l, n_0}^{\Theta_L}:=n_0 \Theta_M(U_{L^w,\Theta_{L^w}(l^{\dot{w}})}^{\Theta_{L^w}}) n_0^{-1}\cap U_{L.l}^{\Theta_L}$ and $l=zl'$ with $z\in Z_M$.
\end{prop}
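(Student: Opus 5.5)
The descent formula will follow by substituting the decomposition of $m$ from Lemma \ref{Omega_w-surjectivity} into the definition of $B^M_\varphi(m,f)$ and unfolding both integrals along $U_M=U_{M,w}^+U_{M,w}^-=U_LN_L$. Since $m\in C_r(\dot w^M_H)\subset\Omega_w$ (because $w^M_H\ge w^M_L$), we write $m=u_1^-\dot w l u_2^-$ uniquely. In the inner integral $\int_{U_M}f(xmu)\psi^{-1}(xu)\,dx$ we split $x=x^-x^+$ with $x^-\in U_{M,w^{-1}}^-$ and $x^+\in U_{M,w^{-1}}^+=\dot wU_L\dot w^{-1}$. We also split $u=u^+u^-$ with $u^+\in U_L$ and $u^-\in N_L=U_{M,w}^-$. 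We then move $\dot w$ across $x^+$, writing $x^+=\dot w x'\dot w^{-1}$ with $x'\in U_L$. Compatibility of $\dot w$ with $\psi$ gives $\psi(x^+)=\psi(x')$. Translating $x^-\mapsto x^-(u_1^-)^{-1}$ and conjugating the $N_L$-variable past $l u^+$ (using that $U_L$ and $L$ normalize $N_L$, with unimodularity) rearranges the $f$-argument into the form $x^-\dot w\,(x'lu^+)\,\tilde u^-$. The characters $\psi(u_1^-)$ and $\psi(u_2^-)$ come out in this step, using that $\psi$ restricted to $N_L$ is a character trivial on commutators with $U_L$.

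Next I will integrate out the $x^-$ and $\tilde u^-$ variables. To do this I must check that the $\varphi$-factor does not depend on $u^-$ modulo the relevant twisted centralizer. Here Lemma \ref{Twistedcentralizer} is used: it identifies $U^{\Theta_M}_{M,m}$ as
\[
\Theta_M(u_1^-)\,\Theta_M\bigl(U^{\Theta_{L^w}}_{L^w,\Theta_{L^w}(l^{\dot w})}\bigr)\,\Theta_M(u_1^-)^{-1}\cap (u_2^-)^{-1}U^{\Theta_L}_{L,l}u_2^-.
\]
After conjugating by $u_2^-$, this is precisely $U^{\Theta_L}_{L,l,n_0}$ with $n_0=u_2^-\Theta_M(u_1^-)$. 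Accordingly, the quotient $U^{\Theta_M}_{M,m}\backslash U_M$ becomes $\bigl(U^{\Theta_L}_{L,l,n_0}\backslash U_L\bigr)\times N_L$, up to the $u_2^-$-conjugation, which preserves Haar measure. We rewrite the $\varphi$-argument as
\[
\Theta_M(u^{-1})m'u=\Theta_M(u^-)^{-1}\,\Theta_M(u')^{-1}\,\Theta_M(n_0)\,\dot w\,l'\,u'\,u^-
\]
(again using Assumption \ref{involution} to move $\Theta_M(u^+)$ across $\dot w$). The $N_L$-variable enters $\varphi$ only through this outer twisted conjugation. This is the point where Assumption \ref{conjugate-invariance} enters, and the approach I would try first is as follows. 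The $f$-integral over $u^-$ combined with the $x^-$-integral is exactly $h_f(x'lu')$ by the definition in Lemma \ref{Omega_w-surjectivity}. By contrast, the dependence of $\varphi$ on $u^-$ is removed by the $U_0$-invariance, after shrinking to $u^-\in U_0\cap N_L$. Fubini then yields the stated formula:
\[
B^M_\varphi(m,f)=\psi(u_1^-u_2^-)\,B^L_\varphi(u_1^-,u_2^-,l,h_f).
\]

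I expect the main obstacle to be this last step, namely justifying that the $\varphi$-cutoff factors through the $U_L$-part so that the $N_L$-integration collapses to $h_f$. A secondary difficulty is keeping track of the central representative $l=zl'$ versus $m'$, which is handled by the $Z_M$-invariance of $\varphi$ under elements of absolute value one, exactly as in the well-definedness argument above. Convergence is not an issue, since $f$ is compactly supported modulo the center and the orbits are closed.
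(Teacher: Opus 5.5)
Your proposal follows essentially the same route as the paper's proof. You split $x=x^-x^+$ along $U_{M,w^{-1}}^{\pm}$ and conjugate the $u$-variable by $u_2^-$, so that Lemma \ref{Twistedcentralizer} identifies $U^{\Theta_M}_{M,m}$ with $(u_2^-)^{-1}U^{\Theta_L}_{L,l,n_0}u_2^-\subset (u_2^-)^{-1}U_Lu_2^-$. You then change variables (in the paper's notation $y^-$, $x'=\dot w^{-1}x^+\dot w$, $u'=u^+$, $v^-=u^-u_2^-$) to extract $\psi(u_1^-u_2^-)$, and use Assumption \ref{conjugate-invariance} to strip the $N_L$-variable from $\varphi$ before integrating out to $h_f$.

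The step you single out as the main obstacle is handled in the paper exactly as you suggest, by ``shrinking'' the compact $N_L$-projection of the support of $f$ into $U_0$. In both arguments this really requires that this projection already lie in $U_0$, since the support of a given $f$ cannot be shrunk. So it is an implicit extra hypothesis on the pair $(f,\varphi)$ rather than something the argument proves.
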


\begin{proof}Decompose $x\in U_M$ as $x=x^-x^+$ with $x^\pm\in U_{M,w^{-1}}^\pm$. Since $u_2^-\in U_{M,w}^-\subset U_M$, one can make a change of variable $u\mapsto (u_2^-)^{-1}u u_2^-=(u_2^-)^{-1}u^+u^- u_2^-$ with $u^\pm\in U_{M,w}^\pm$. Set $$n_0:=u_2^-\Theta_M(u_1^-)\in U_M, \ U_{L, l, n_0}^{\Theta_L}:=n_0 \Theta_M(U_{L^w,\Theta_{L^w}(l^{\dot{w}})}^{\Theta_{L^w}}) n_0^{-1}\cap U_{L.l}^{\Theta_L}.$$ Note that $U_{L,l.n_0}^{\Theta_L}\subset U_{L,l}^{\Theta_L}\subset U_L=U_{M,w}^+$.
By Lemma \ref{Twistedcentralizer}, $U_{M,m}^{\Theta_M}=(u_2^-)^{-1}U_{L,l, n_0}^{\Theta_L}u_2^- $.
$$B^M_\varphi(m,f)=\int_{U_{M,m}^{\Theta_M}\backslash U_M}\int_{U_M}f(xmu)\varphi(\Theta_M(u^{-1})m'u)\psi^{-1}(xu)dxdu,$$ 
$$=\int_{U_{L,l, n_0}^{\Theta_L}\backslash U_{M,w}^+}\int_{U_{M,w^{-1}}^+}\int_{U_{M,w}^-}\int_{U_{M,w^{-1}}^-}f(x^-x^+u_1^-\dot{w}l u_2^-(u_2^-)^{-1}u^+u^-u_2^-)$$$$\cdot\varphi(\Theta_M((u_2^-)^{-1}(u^-)^{-1}(u^+)^{-1}u_2^-)u_1^-\dot{w}l' u_2^-(u_2^-)^{-1}u^+u^-u_2^-)\psi^{-1}(x^-x^+ (u_2^-)^{-1}u^+u^-u_2^-)dx^-du^-dx^+du^+$$
Set
$$y^-:=x^-x^+u_1^-(x^+)^{-1}\in U_{M,w^{-1}}^-,\  x':=\dot{w}^{-1}x^+ \dot{w}\in \dot{w}^{-1}U_{M,w^{-1}}^+\dot{w}= U_{M,w}^+,\ u':=u^+\in U_{M,w}^+, v^-:=u^-u_2^-\in U_{M,w}^-,$$
Then the Jacobian of this change of variable is 1 and $$f(x^-x^+u_1^-\dot{w}l u_2^-(u_2^-)^{-1}u^+u^-u_2^-)=f(y^-\dot{w}x'l u' v^-),$$ $$\varphi(\Theta_M((u_2^-)^{-1}(u^-)^{-1}(u^+)^{-1}u_2^-)u_1^-\dot{w}l' u_2^-(u_2^-)^{-1}u^+u^-u_2^-)=\varphi(\Theta_M(v^-)^{-1}\Theta_M(u')^{-1}\Theta_M(n_0)\dot{w}l' u' v^-).$$ By compatibility of $\psi$ and $\dot{w}$, $\psi(x')=\psi(x^+)$. Since $f\in C^\infty_c(\Omega_{w};\omega_\pi)$, $\Omega_w=U_{M,w^{-1}}^-\times \dot{w}L\times U_{M,w}^-$, $U_{M,w^{-1}}^-$ and $U_{M,w}^-$ are closed in $\Omega_w$, there exists compact subgroups $U_1$ of $U_{M,w}^-$ and $U_2$ of $U_{M,w^{-1}}^-$ such that $f(y^-\dot{w}x'lu'v^-)\neq 0$ implies that $y^-\in U_1$ and $v^-\in U_2$. On the other hand, by Assumption \ref{conjugate-invariance}, $\varphi$ is invariant under the $\Theta_M$-twisted conjugate action by some open compact subgroup $U_0$ of $U_M$. Shrink $U_2$ if necessary so that $v^-$ lies in $U_0$, it follows that $\varphi(\Theta_M(v^-)^{-1}\Theta_M(u')^{-1}\Theta_M(n_0)\dot{w}l' u' v^-)=\varphi(\Theta_M(u')^{-1}\Theta_M(n_0)\dot{w}l' u')$. Consequently, the above equality is 
$$=\psi(u_1^-u_2^-)\int_{U_{L,l, n_0}^{\Theta_L}\backslash U_L}\int_{U_L}\int_{U_{M,w}^-}\int_{U_{M,w^{-1}}^-}f(y^-\dot{w}x'l u' v^-)\varphi(\Theta_M(u')^{-1}\Theta_M(n_0)\dot{w}l' u' )\psi^{-1}(x'u')dy^-dv^-dx'du'$$
$$=\psi(u_1^-u_2^-)\int_{U_{L,l, n_0}^{\Theta_L}\backslash U_L}\int_{U_L}h_f(x'l u')\varphi(\Theta_M(u')^{-1}\Theta_M(n_0)\dot{w}l' u' )\psi^{-1}(x'u')dx'du'=\psi(u_1^-u_2^-)B^L_\varphi(u_1^-,u_2^-,l, h_f),$$ where the second equality follows from Lemma \ref{Omega_w-surjectivity}.

\end{proof}

\begin{remark}If we assume that
$\Theta_M(L)=L^w$, then $\Theta_L:=\mathrm{Ad}(\dot{w}^{-1})\circ \Theta_M\vert_L$ defines an $F$-involution on $L$, preserving the splitting on $L$. This assumption implies Assumption \ref{involution}. We provide a simple proof here. For any $u=u^+u^-\in U_M$ with $u^\pm\in U_{M,w}^\pm$, as $\Theta_M(L)=L^w$, and $\Theta_M$ preserves $U_M$, we have $\Theta_M(u^\pm)\in U_{M,w^{-1}}^\pm$. Thus $\Theta_L(u^+)=\dot{w}^{-1}\Theta_M(u^+)\dot{w}\in \dot{w}^{-1}U_{M,w^{-1}}^+\dot{w}=U_{M,w}^+=U_L$. As a result, $\Theta_L$ preserves the splitting on $L$ and \[\Theta_M(u)_+=\Theta_M(u^+)=\dot{w}\Theta_L(u^+)\dot{w}^{-1}\]When $\Theta_M=\mathrm{Ad}(\dot{w}_0^{-1})$, this assumption is equivalent to the condition that $\mathrm{Ad}(\dot{w}^G_L) L=L$. Consequently, $L$ is self-associate in $G$. It is the case in \cite{CST17}, \cite{She19}, and \cite{DS25}.
    
\end{remark}

\begin{remark}  It is worth to mention that Assumption \ref{involution} fails to be true in general, especially when $G$ is of type A. A typical counter example is $G=\mathrm{GL}_6$, $M=\mathrm{GL}_3\times\mathrm{GL}_3$, and $L=\mathrm{GL}_3\times \mathrm{GL}_2\times\mathrm{GL}_1$.   
\end{remark}

\begin{lem}\label{trivial-stabilizer} For $n\in N'$ such that $\dot{w}_0^{-1}n=mn'\overline{n}$, set $\Theta_M=\mathrm{Ad}(\dot{w}_0^{-1})$. Suppose 
$U_{M,n}$ is trivial for generic $n$, then up to a set of measure zero,
Assumption \ref{involution} is satisfied for $m$ being the image of such $n$. 
\end{lem}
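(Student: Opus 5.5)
The plan is to show that, for generic $n$, the twisted centralizer $U^{\Theta_M}_{M,m}$ is trivial. Once it is trivial, the identity in Assumption \ref{involution} only has to be checked at $u=e$, where it holds for any $F$-involution $\Theta_L$ preserving the splitting of $L$ and compatible with $\psi$. So the real content splits into two parts: an identification of centralizers, and the existence of some admissible $\Theta_L$ for each standard Levi $L$ of $M$.

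\textbf{Step 1: identify the two centralizers.} With $\Theta_M=\mathrm{Ad}(\dot w_0^{-1})$, the equivariance computation in the proof of Proposition \ref{BruhatDecomp} gives
\[
\mathcal N(unu^{-1})=\Theta_M(u)\,\mathcal N(n)\,u^{-1}.
\]
\begin{itemize}
\item If $u\in U_{M,n}$, then $unu^{-1}=n$, so $\Theta_M(u)mu^{-1}=m$. Hence $U_{M,n}\subset U^{\Theta_M}_{M,m}$.
\item Conversely, let $u\in U^{\Theta_M}_{M,m}$ and put $n_1=unu^{-1}$. Then $\mathcal N(n_1)=\mathcal N(n)$.
\item By Corollary \ref{finiteetale2}, $\mathcal N$ is étale of degree 2 onto its image. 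By Lemma \ref{etale-fundamental-group}, its fibers are the orbits of $\alpha^\vee(-1)$. So either $n_1=n$ or $n_1=\alpha^\vee(-1)n\alpha^\vee(-1)^{-1}$.
\item The second case is excluded on a dense open set. By Theorem \ref{orbit-space-representative}, the $U_M$-orbits on $N'$ are represented uniquely by the slice $S_0$. The element $\alpha^\vee(-1)$ sends the slice point $(c_0,\ldots,c_5)$ to $(c_0,-c_1,\ldots,-c_5)$, which is a different slice point whenever $c_5\neq0$. So $n_1$ and $\alpha^\vee(-1)n\alpha^\vee(-1)^{-1}$ lie in different $U_M$-orbits, and $n_1$, being $U_M$-conjugate to $n$, must equal $n$.
\item Therefore $u\in U_{M,n}$, which is trivial by hypothesis (and by Theorem \ref{orbit-space-representative}).
\end{itemize}
This shows $U^{\Theta_M}_{M,m}=\{e\}$ for $m$ in the image of a dense open subset. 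This is the analogue of Lemma \ref{Sundaravaradhan} for the twisted centralizer.

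\textbf{Step 2: supply a $\Theta_L$ for each standard Levi $L$.} For each standard Levi $L$ of $M$ we need an $F$-involution of $L$ that preserves the splitting $(B_L,A,\{x_\beta\}_{\beta\in\Delta_L})$ and is compatible with $\psi$.
\begin{itemize}
\item For $M_D\simeq\mathrm{SL}_6$, I would take the pinned automorphism determined by the Dynkin involution of $\Delta_L$, or simply the identity.
\item Both choices are involutive, since a pinned diagram automorphism of order at most two is one.
\item Both are $\psi$-compatible, because they permute the simple root vectors with coefficient $1$.
\end{itemize}
Since $U^{\Theta_M}_{M,m}=\{e\}$, the required identity $\Theta_M(u)_+=\dot w\Theta_L(u^+)\dot w^{-1}$ only has to hold at $u=e$, where both sides are $e$. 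Assumption \ref{involution} then holds for all $m$ outside the measure-zero complement.

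\textbf{Expected obstacle.} The main difficulty is the converse inclusion in Step 1. It depends on controlling the degree-two fibers of $\mathcal N$ on all of $N'$, not just on the slice. Corollary \ref{finiteetale2} gives this only after passing through the diagram identifying $U_M\times R$ with $N'$. One also has to check that the bottom map of that diagram, the twisted $U_M$-action on $\mathcal N(N')$, is free. That freeness is itself a consequence of the same fiber argument, so the argument has to be arranged to avoid circularity. I would first prove freeness on the slice image $\mathcal N(R)$ directly from the explicit formulas of Proposition \ref{BruhatDecomp}: there the $U_M$-components $u_1,u_2$ determine $m$ uniquely in the big cell. Only then would I transport the result to $\mathcal N(N')$ by equivariance.
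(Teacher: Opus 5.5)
\textbf{A gap in Step 1.} The dichotomy ``$n_1=n$ or $n_1=\alpha^\vee(-1)n\alpha^\vee(-1)^{-1}$'' needs the fibres of $\mathcal N$ on all of $N'$ to consist of two points. That is the injectivity of the bottom map $U_M\times\mathcal N(R)\to\mathcal N(N')$ in Corollary \ref{finiteetale2}, which the paper only asserts. Taken at face value, the formulas of Proposition \ref{BruhatDecomp} rule it out. Since $\Theta_M|_{U_M}=1$, two big-cell elements $u_1t\dot w_Mu_2$ and $u_1't'\dot w_Mu_2'$ lie in the same $U_M$-orbit if and only if $t=t'$ and $u_2u_1=u_2'u_1'$. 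On the slice, $u_1$ and $u_2$ are $c_0-c_1c_5$ and $c_0$ times the same vector of the commutative group $U_{\alpha_{23}}U_{\alpha_{15}}U_{\alpha_4}$. So the orbit of $\mathcal N(n)$, for $n$ on the slice, depends only on $c_2c_5$, $c_3c_5$, $c_4c_5$, $c_0(c_0-c_1c_5)-c_2c_3c_4c_5$ and $2c_0-c_1c_5$. These are five functions on a six-dimensional slice, so $\mathcal N$ has one-dimensional fibres on $N'$ and the two-point fibre argument fails.

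\textbf{The fix is already in your last paragraph.} The freeness argument there is by itself a complete proof, so you should drop the fibre analysis. $U^{\Theta_M}_{M,m}$ is by definition the stabilizer of $m$ for the action $u\cdot m=\Theta_M(u)mu^{-1}$. For $m=u_1t\dot w_Mu_2$ in the big cell, $\Theta_M(u)^{-1}mu=m$ forces $u_2u=u_2$, by uniqueness of the decomposition $U_M\times A\times U_M\to C(w_M)$; hence $u=1$. Every $m\in\mathcal N(N')$ lies in the big cell by Proposition \ref{BruhatDecomp} and $U_M$-equivariance. Assumption \ref{involution} is then vacuous, and your Step 2 is fine (for instance $\Theta_L=1$, as the paper also remarks).

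\textbf{Comparison with the paper.} The paper combines Lemma \ref{Sundaravaradhan} ($U_{M,n}=U'_{M,m}$ off a null set) with the inclusion $U^{\Theta_M}_{M,m}\subset U'_{M,m}$. That inclusion holds because $mum^{-1}=\Theta_M(u)$ and $\psi\circ\Theta_M=\psi$. The paper's route is case-free and uses the hypothesis on $U_{M,n}$ as stated, but it rests on Sundaravaradhan's result and gives the conclusion only almost everywhere. The big-cell argument is elementary, needs no hypothesis on $U_{M,n}$, and works for every $m\in\mathcal N(N')$. Its cost is the $E_6$-specific input that $\mathcal N(N')$ lies in the big cell of $M$.
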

\begin{proof} By Lemma \ref{Sundaravaradhan}, up to a set of measure zero, \[U_{M,n}=U_{M,m}':=\{u\in U_M: mum^{-1}\in U_M, \psi(mum^{-1})=\psi(u)\}\]
  is trivial. On the other hand, the compatibility of $\psi$ with $\dot{w}_0$ implies that $U_{M,m}^{\Theta_M}\subset U_{M,m}'$. Hence $u^+=u_+=1$.   
\end{proof}

\begin{remark}The function $B_\varphi^L(u_1^-,u_2^-,l, h_f)$ is a partial Bessel function on $L$ depending on the Bruhat decomposition $m=u_1^-\dot{w}l u_2^-$. In particular, if $n_0=1$ and the condition $\Theta_M(L)=L^w$ is satisfied, then
    $$B^M_\varphi(m, f)=B^L_{\varphi_{\dot{w}}}(l, h_f)$$ where $\varphi_{\dot{w}}(m):=\varphi(\dot{w}m)$. This is the case in \cite{CST17}.
\end{remark}

\begin{cor}\label{wa-descent-formula} Suppose $\Theta_M\vert_{U_M}=1$, for $w=w_Mw_L^{-1}\in B(M)$ and $a\in A_w=Z_L$, we have \[U_{M,\dot{w}a}^{\Theta_M}=U_{L\cap L^w,\dot{w}}:=\{u^+\in U_L\cap U_{L^w}: \dot{w}^{-1}u^+\dot{w}=u^+   \}\] Moreover, Assumption \ref{involution} is satisfied for $m=\dot{w}a, a\in A_{w}$ with any $F$-involution $\Theta_L$ of $L$ preserving the splitting of $L$ such that $\Theta_L\vert_{U_L}=1$, and \[B^M_\varphi(\dot{w}a, f)=B^L_{\varphi_{\dot{w}}}(a, h_f) \]
\end{cor}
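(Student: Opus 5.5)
The plan is to specialize Lemma \ref{Twistedcentralizer} and Proposition \ref{Descentformula} to the element $m=\dot{w}a$ with $H=L$, so that $u_1=u_2=1$ and $l=a$. I would still recompute the twisted centralizer directly, because the hypothesis $\Theta_M\vert_{U_M}=1$ makes this short. First I compute $U^{\Theta_M}_{M,\dot{w}a}$. For $u\in U_M$, write $u=u^+u^-$ with $u^\pm\in U^\pm_{M,w}$, and write $u^{-1}=v^-v^+$ with $v^\pm\in U^\pm_{M,w^{-1}}$. Then
\[
\Theta_M(u)^{-1}\dot{w}au=u^{-1}\dot{w}au=v^-\,\dot{w}\bigl(\dot{w}^{-1}v^+\dot{w}\,a\,u^+\bigr)\,u^-.
\]
Since $\dot{w}^{-1}U^+_{M,w^{-1}}\dot{w}=U^+_{M,w}=U_L$, the middle factor lies in $L$. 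So this is the decomposition of Lemma \ref{Omega_w-surjectivity} in $\Omega_w\simeq U^-_{M,w^{-1}}\times\dot{w}L\times U^-_{M,w}$.

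By uniqueness of that decomposition, $u^{-1}\dot{w}au=\dot{w}a$ forces $v^-=1$, $u^-=1$, and $\dot{w}^{-1}v^+\dot{w}\,a\,u^+=a$. Hence $u=u^+\in U_L$ and $u^{-1}=v^+\in U^+_{M,w^{-1}}$. The first condition says $u\in U_L$, and the second says $u\in\dot{w}U_L\dot{w}^{-1}=U_{L^w}$, so $u\in U_L\cap U_{L^w}$. Because $a\in A_w=Z_L$ commutes with $U_L$, the last equation reads $\dot{w}^{-1}u^{-1}\dot{w}=u^{-1}$. Conversely, any $u\in U_L\cap U_{L^w}$ with $\dot{w}^{-1}u\dot{w}=u$ satisfies $u^{-1}\dot{w}au=\dot{w}u^{-1}au=\dot{w}a$. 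This gives $U^{\Theta_M}_{M,\dot{w}a}=U_{L\cap L^w,\dot{w}}$.

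Next I verify Assumption \ref{involution}. Let $\Theta_L$ be any $F$-involution of $L$ preserving the splitting with $\Theta_L\vert_{U_L}=1$. For $u$ in the centralizer just computed, $\Theta_M(u)=u$. Since $u$ lies in $U_{L^w}\subset U^+_{M,w^{-1}}$, we get $\Theta_M(u)_+=u$. On the other hand $u=u^+$ and $\dot{w}\Theta_L(u^+)\dot{w}^{-1}=\dot{w}u\dot{w}^{-1}=u$, because $\dot{w}$ fixes $u$. The compatibility of $\Theta_L$ with $\psi$ is automatic.

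Finally I apply Proposition \ref{Descentformula} with $u_1^-=u_2^-=1$, which gives $n_0=1$ and $\psi(u_1^-u_2^-)=1$. With $l=a=zl'$, the twisted centralizer $U^{\Theta_L}_{L,a,1}$ reduces to the group computed above. The remaining step, which I expect to be the main obstacle, is to match the cutoff. The integrand obtained from Proposition \ref{Descentformula} is $\varphi(u'^{-1}\dot{w}l'u')$. I would rewrite it as $\varphi_{\dot{w}}\bigl((\dot{w}^{-1}u'\dot{w})^{-1}l'u'\bigr)$, which is the $\varphi_{\dot{w}}$-twisted conjugation on $L$. I would then check two things:
\begin{itemize}
\item the centralizers and the quotient $U_{L,a}\backslash U_L$ agree with those defining $B^L_{\varphi_{\dot{w}}}(a,h_f)$;
\item the discrepancy between $\dot{w}^{-1}u'\dot{w}$ and $u'$ does not change $\varphi_{\dot{w}}$.
\end{itemize}
For the second point I would first try to use that $a$ is central in $L$, together with Assumption \ref{conjugate-invariance} on the relevant compact support. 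If that fails, I would fall back on reading $B^L$ with the involution $\mathrm{Ad}(\dot{w}^{-1})$ on $U_L$ built into the definition, and check that this reading is what the descent formula produces.
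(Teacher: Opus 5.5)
Your proof follows the paper's own argument. The paper also computes $U^{\Theta_M}_{M,\dot wa}$ directly, by decomposing $u$ with respect to both $w$ and $w^{-1}$ and using uniqueness in $\Omega_w\simeq U^-_{M,w^{-1}}\times\dot wL\times U^-_{M,w}$. It then checks Assumption~\ref{involution} in one line as you do, and invokes Proposition~\ref{Descentformula} with $u_1^-=u_2^-=1$.

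The step you flag as the main obstacle is settled in the paper by notation, not by a computation. There $B^L_{\varphi_{\dot w}}(a,h_f)$ stands for $B^L_\varphi(1,1,a,h_f)$, that is,
\[
\int_{U_{L\cap L^w,\dot w}\backslash U_L}\int_{U_L}h_f(x'au')\,\varphi_{\dot w}\bigl(\dot w^{-1}u'^{-1}\dot w\,a'u'\bigr)\,\psi^{-1}(x'u')\,dx'\,du',
\]
which is exactly the form used right afterwards to define $\widetilde{\varphi}^M_L(a')$. So commit to your fallback reading and drop the first option, which cannot work:
\begin{itemize}
\item Because $a\in Z_L$, the factor $h_f(x'au')=h_f(x'u'a)$ puts no compactness constraint on $u'$, since you can absorb $u'$ into $x'$.
\item Assumption~\ref{conjugate-invariance} only gives invariance under a compact open $U_0$.
\item Hence the discrepancy between $\dot w^{-1}u'\dot w$ and $u'$ is a genuine part of the integrand and cannot be removed.
\end{itemize}
The other naive reading also fails. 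Suppose you read $B^L_{\varphi_{\dot w}}(a,h_f)$ as the partial Bessel integral of $L$ built from the involution $\Theta_L=1$ used in Assumption~\ref{involution}. Then $U^{\Theta_L}_{L,a}=U_L$, because $a$ is central in $L$. The quotient collapses, and you would get $\varphi(\dot wa')B^L(a,h_f)$ rather than $\widetilde{\varphi}^M_L(a')B^M(\dot wa,f)$.

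One caution about your first check point, that the centralizers and quotient agree. Compute $U^{\Theta_L}_{L,a,1}$ from the intersection that defines it in Proposition~\ref{Descentformula} (equivalently, from Lemma~\ref{Twistedcentralizer}) with $\Theta_L\vert_{U_L}=1$. You get $U_L\cap U_{L^w}$, which is in general strictly larger than $U_{L\cap L^w,\dot w}$. The reason is that the intersection forgets that the same $u^+$ enters both defining relations in the proof of that lemma. The proof of Proposition~\ref{Descentformula} actually quotients by $U^{\Theta_M}_{M,\dot wa}$ itself. So the quotient $U_{L\cap L^w,\dot w}\backslash U_L$ is identified by your direct computation, not by the intersection formula.
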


\begin{remark} In our case, we may simply take $\Theta_L=1$.    
\end{remark}
\begin{proof}As $\Theta_M=1$, $u\in U_{M,\dot{w}a}^{\Theta_M}\Leftrightarrow u^{-1}\dot{w}au=\dot{w}a\Leftrightarrow u_-^{-1}u_+^{-1}\dot{w}au^+u^-=\dot{w}a\Leftrightarrow u_-^{-1}\dot{w}(\dot{w}^{-1}u_+^{-1}\dot{w})au^+u_-=\dot{w}a$, in which we decompose $u=u^+u^-=u_+u_-$ where $u^\pm\in U_{M,w}^\pm$ and $u_{\pm}\in U_{M,w^{-1}}^\pm$. Since $a\in A_w$, $u^{-1}\dot{w}au\in C_r(\dot{w})\subset \Omega_{w}=U_{M,w^{-1}}^-\times \dot{w}L\times U_{M,w}^-$, and the decomposition is unique. Therefore $u\in U_{M,\dot{w}a}^{\Theta_L}\Leftrightarrow u_-=u^-=1, \&\   \dot{w}^{-1}u_+^{-1}\dot{w}au^+=a$. The last condition is the same as $\dot{w}^{-1}u^+\dot{w}=u^+$, since $a\in A_w=Z_L$ and $u^+\in U_{M,w}^+=U_L$. Moreover, $u_-=u^-=1$ implies that $u=u^+=u_+\in U_L\cap U_{L^w}$. This proves the first claim of the corollary. The second claim of the corollary is trivially satisfied if $\Theta_L\vert_{U_L}=1$, since $\Theta_M(u)_+=u_+=u^+=\dot{w}^{-1}u^+\dot{w}=\dot{w}^{-1}\Theta_L(u^+)\dot{w}$. By Proposition \ref{Descentformula}, we obtain 
$B^M_\varphi(\dot{w}a, f)=B^L_\varphi(1,1, a, h_f)=B^L_{\varphi_{\dot{w}}}(a, h_f)$.
\end{proof}

\subsubsection{Asymptotic expansions of partial Bessel integrals} In this section, we will derive an asymptotic expansion formula for partial Bessel integrals which is crucial for the analytic stability. From now on, we set $\Theta_M=\mathrm{Ad}(\dot{w}_0^{-1})$. Since in our case $\mathrm{Ad}(\dot{w}_0^{-1})(\gamma_0^\vee(t))=\gamma_0^\vee(t^{-1})$, the $\Theta_M$-twisted conjugate action of $Z_M$ on $A$ is just  $a\mapsto \Theta_M(z)^{-1}az=\gamma_0^\vee(t^2)a=z^2a$, if $z=\alpha^\vee(t)=\gamma_0^\vee(t)\in Z_M^0$. Consequently, if $m=u_1\dot{w}_M au_2$ with $u_1,u_2\in U_M$, $a\in A$, then $m'= u_1 \dot{w}_M a' u_2$ with $a=z^2a'$. We denote $A'$(resp. $A_{w}'$) the subtorus of $A$ (resp. $A_w$) obtained by strpping off the square of the center of $M$, i.e., $A=Z_M^2A'$. (resp. $A_w=Z_M^2 A_w'$.)

By Lemma \ref{w_0-action-on-roots}, if we set $\Theta_M=\mathrm{Ad}(\dot{w}_0^{-1})$, then $\Theta_M\vert_{U_M}=1$.
By Corollary \ref{wa-descent-formula}, for $w=w_Mw_L^{-1}$ and $a\in A_w=Z_L$, $U_{M,\dot{w}a}^{\Theta_M}\subset U_L=U_{M,w}^+$, therefore
\[B^M_\varphi(\dot{w}a, f)=\int_{U_{M,\dot{w}a}^{\Theta_M}\backslash U_{M,w}^+}\int_{U_{M,w}^-}\int_{U_M}f(x\dot{w}au^+u^-)\varphi(\Theta_M(u^-)^{-1}\Theta_M(u^+)^{-1}\dot{w}a'u^+u^-)\psi^{-1}(xu^+u^-)dx du^-du^+.\]
As $au^+=u^+a$, we have $x\dot{w}au^+u^-=x\dot{w}u^+\dot{w}^{-1}\dot{w}au^-$. Make the change of variable $x\mapsto x(\dot{w}u^+\dot{w}^{-1})^{-1}$, and note that $f(x\dot{w}a u^-)\neq 0$ implies that $u^-\in U_2$ for some open compact subgroup $U_2$ of $U_M$. By Assumption \ref{conjugate-invariance}, the above expression
\[=\int_{U_{L\cap L^w,\dot{w}}\backslash U_{M,w}^+}\int_{U_{M,w}^-}\int_{U_M}f(x\dot{w}a u^-)\varphi(\Theta_M(u^+)^{-1}\dot{w}a'u^+)\psi^{-1}(xu^-)dx du^- du^+=\widetilde{\varphi}^{M}_L(a')B^M(\dot{w}a, f)\] where $B^M(\dot{w}a, f)$ is the full Bessel function and 
\[\widetilde{\varphi}^M_L(a')=\int_{U_{L\cap L^w,\dot{w}}\backslash U_{M,w}^+}\varphi(\Theta_M(u^+)^{-1}\dot{w}a' u^+)du^+=\int_{U_{L\cap L^w,\dot{w}}\backslash U_L}\varphi_{\dot{w}}(\dot{w}^{-1}(u^+)^{-1}\dot{w}a' u^+)du^+\] in which $\varphi_{\dot{w}}(l)=\varphi(\dot{w}l)$.

\paragraph{Cutoff convention.}
For each $\kappa$, let $\varphi=\varphi_\kappa:=\mathbf{1}_{X_\kappa}$, where
$X_\kappa\subset M$ is open.  We assume that the family
$\{X_\kappa\}_\kappa$ is compactly exhaustive, in the sense that
\begin{equation}\tag{CE}\label{eq:compact-exhaustion}
  \text{for every compact subset $C\subset M$, there exists $\kappa_C$
  such that }
  C\subset X_\kappa
  \quad\text{for all }\kappa\geq \kappa_C .
\end{equation}
In the notation used above, one may take $X_\kappa=M_{x,\kappa}$.

\begin{lem}\label{lem:cutoff-positive}
Let
\[
  w=w_Mw_L^{-1}\in B(M),\qquad
  a=z^2a',\quad z\in Z_M,\quad a'\in A_w',
\]
where $A_w=Z_M^2A_w'$, and put $H_{\dot{w}}:=U_{L\cap L^w,\dot w}$. If $\dot w a'\in X_\kappa$, then
\[
  \widetilde{\varphi}^{M}_{L}(a')>0.
\]
Consequently, for every compact subset $K\subset A'$, there exists
$\kappa_K$ such that
\[
  \widetilde{\varphi}^{M}_{L}(a')>0
  \qquad
  \text{for all }a'\in K\text{ and all }\kappa\geq\kappa_K .
\]
\end{lem}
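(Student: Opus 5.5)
The plan is to read $\widetilde{\varphi}^M_L(a')$ as the integral of a nonnegative function over $H_{\dot w}\backslash U_L$ against a Haar-type quotient measure. Then it suffices to find a nonempty open subset of $U_L$ on which the integrand equals $1$.

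By definition,
\[
\widetilde{\varphi}^M_L(a')=\int_{H_{\dot w}\backslash U_L}\mathbf 1_{X_\kappa}\bigl(\dot w\,\dot w^{-1}(u^+)^{-1}\dot w\,a'u^+\bigr)\,du^+
=\int_{H_{\dot w}\backslash U_L}\mathbf 1_{X_\kappa}\bigl((u^+)^{-1}\dot w a' u^+\bigr)\,du^+ .
\]
Here we use $\Theta_M|_{U_M}=1$ from Lemma \ref{w_0-action-on-roots}. The integrand is well defined on the quotient by the computation in Corollary \ref{wa-descent-formula}: for $h\in H_{\dot w}$, $h^{-1}\dot w a' h=\dot w a'$ because $\dot w^{-1}h\dot w=h$ and $a'\in A_w=Z_L$ commutes with $h\in U_L$.

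The map $u^+\mapsto (u^+)^{-1}\dot w a' u^+$ is continuous on $U_L$ and sends $u^+=1$ to $\dot w a'$. Since $X_\kappa$ is open and $\dot w a'\in X_\kappa$, its preimage is an open neighbourhood $V$ of $1$ in $U_L$. The image of $V$ in $H_{\dot w}\backslash U_L$ is open, because the quotient map is open, and nonempty. The quotient measure is a nonzero invariant Radon measure (both groups are unipotent, hence unimodular), so it charges nonempty open sets. Hence the integral of the nonnegative integrand is strictly positive. If the integral is infinite, positivity is still clear, and in the intended application the support condition from $f$ keeps things finite anyway. The only point needing care is this quotient-measure remark, which I expect to be routine.

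For the consequence, fix a compact $K\subset A'$. The set $C=\dot w K$ is compact in $M$, since $\dot w$ is fixed; for $A'$ read $A'_w$, the domain where the statement applies. By \eqref{eq:compact-exhaustion} there is $\kappa_K$ with $\dot wK\subset X_\kappa$ for all $\kappa\ge\kappa_K$. The first part then gives $\widetilde{\varphi}^M_L(a')>0$ for all $a'\in K$ and all such $\kappa$. If $K$ is meant as a subset of all of $A'$, one applies this to $K\cap A'_w$, which is compact since $A'_w$ is closed. The main (mild) obstacle is making sure that $\dot w a'$, rather than some $\Theta_M$-twisted variant, is the element whose membership in $X_\kappa$ is tested. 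This is settled by $\Theta_M|_{U_M}=1$, as used above.
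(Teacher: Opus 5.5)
Your proposal is correct and follows essentially the same argument as the paper. The map $u\mapsto u^{-1}\dot w a' u$ descends to $H_{\dot w}\backslash U_L$, because $h\in H_{\dot w}$ commutes with $\dot w$ and with $a'\in Z_L$; it sends the identity coset to $\dot w a'\in X_\kappa$, so the preimage of the open set $X_\kappa$ is a nonempty open set of positive measure, and the uniform statement follows from (CE) applied to the compact set $\dot w K$. Your extra remarks on unimodularity of the quotient measure and on reading $K$ inside $A'_w$ are harmless refinements of the paper's argument.
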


\begin{proof}
Since $\Theta_M|_{U_M}=\operatorname{id}$, the factor occurring above is
\[
  \widetilde{\varphi}^{M}_{L}(a')
  =
  \int_{H_{\dot{w}}\backslash U_L}
  \mathbf{1}_{X_\kappa}
  \bigl(u^{-1}\dot w a'u\bigr)\,du .
\]
Consider the map
\[
  T_{a'}:H_{\dot{w}}\backslash U_L\longrightarrow M,
  \qquad
  H_{\dot{w}}u\longmapsto u^{-1}\dot w a'u .
\]
This map is well defined.  Indeed, if $h\in H_{\dot{w}}$, then
$\dot w^{-1}h\dot w=h$, while $a'\in Z_L$ commutes with $h$, and hence
\[
  (hu)^{-1}\dot w a'(hu)
  =
  u^{-1}h^{-1}\dot w a'hu
  =
  u^{-1}\dot w a'u .
\]
The map $T_{a'}$ is continuous and
\[
  T_{a'}(H_{\dot{w}})=\dot w a'.
\]
Thus, if $\dot w a'\in X_\kappa$, the set
$
  T_{a'}^{-1}(X_\kappa)
$
is a nonempty open neighborhood of the identity coset in
$H_w\backslash U_L$.  It therefore has positive quotient measure, and
\[
  \widetilde{\varphi}^{M}_{L}(a')
  =
  \operatorname{vol}\bigl(T_{a'}^{-1}(X_\kappa)\bigr)>0.
\]

Finally, if $K\subset A'$ is compact, then $\dot wK$ is compact in $M$.
By \eqref{eq:compact-exhaustion}, there exists $\kappa_K$ such that
$\dot wK\subset X_\kappa$ for every $\kappa\geq\kappa_K$.  The first
assertion then gives the required uniform positivity.
\end{proof}

\begin{lem}\label{lem:partial-full-nonvanishing}
Let $w=w_Mw_L^{-1}\in B(M)$ and let $a\in A_w=Z_L$.  Given
$f\in C_c^\infty(\Omega_w;\omega_\sigma)$, suppose that
$\varphi=\varphi_\kappa=\mathbf{1}_{X_\kappa}$ satisfies Assumption \ref{conjugate-invariance} and
\eqref{eq:compact-exhaustion}.  Then there exists $\kappa_0$, depending
only on $f$ and $w$, such that, for every $\kappa\geq\kappa_0$,
\[
  B_{\varphi}^{M}(\dot w a,f)\neq 0
  \quad\Longleftrightarrow\quad
  B^{M}(\dot w a,f)\neq 0 .
\]
\end{lem}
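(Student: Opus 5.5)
The plan is to use the factorization established just before Lemma \ref{lem:cutoff-positive}:
\[
B^M_\varphi(\dot w a,f)=\widetilde{\varphi}^M_L(a')\,B^M(\dot w a,f),\qquad a=z^2a',\ a'\in A_w'.
\]
This was derived from Corollary \ref{wa-descent-formula} and Assumption \ref{conjugate-invariance}, and it holds once $\kappa$ is large enough that the open compact subgroup $U_2$ controlling the support of $f$ in the $u^-$ variable lies in the group $U_0$ of Assumption \ref{conjugate-invariance}. Given this identity, the implication ``$\Rightarrow$'' is trivial: if the full Bessel function vanishes, so does the partial one. For ``$\Leftarrow$'' it suffices to show $\widetilde{\varphi}^M_L(a')>0$ whenever $B^M(\dot w a,f)\neq0$, with $\kappa\ge\kappa_0$ for some $\kappa_0$ independent of $a$.

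The main step is to produce a compact set $K\subset A_w'$, depending only on $f$ and $w$, containing every $a'$ for which $B^M(\dot wa,f)\neq0$. Then Lemma \ref{lem:cutoff-positive} gives $\kappa_K$ with $\widetilde{\varphi}^M_L>0$ on $K$, and we take $\kappa_0=\max(\kappa_K,\kappa_1)$, where $\kappa_1$ is the threshold needed for the factorization.

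To get $K$, note that $f\in C_c^\infty(\Omega_w;\omega_\sigma)$ is compactly supported modulo $Z_M$ in $\Omega_w\simeq U_{M,w^{-1}}^-\times\dot wL\times U_{M,w}^-$. Hence, if $f(x\dot w a u^-)\neq0$ for some $x,u^-$, the $L$-component $x'a$ ranges over a set compact modulo $Z_M$; here $x'\in U_L$ is $\dot w^{-1}x^+\dot w$. The Iwasawa/Bruhat uniqueness in $L$ then forces $a$ into a set of the form $Z_M\cdot C$ with $C\subset A_w$ compact. Writing $a=z^2a'$ and using that $Z_M/Z_M^2$ is finite modulo a compact subgroup (up to the finite group $Z_M\cap M_D$), we obtain that $a'$ lies in a compact subset of $A_w'$.

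A subtlety is that the $\dot w a'$ entering $\varphi$ must be a representative compatible with the decomposition $A_w=Z_M^2A_w'$. The ambiguity is by elements of absolute value one, which $\varphi$ ignores by the invariance assumption on $\varphi$, so the choice does not matter. The obstacle I expect is the uniformity claim: the support of $B^M(\dot w a,f)$ in $a'$ is controlled only modulo the center. I would handle this by building $A_w'$ as a complementary subtorus, so that $A_w'\cap Z_M$ is finite; then compactness modulo $Z_M$ of the support of $f$ translates into genuine compactness in $A_w'$.
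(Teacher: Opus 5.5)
Your proposal is correct and is essentially the paper's proof: the factorization $B^M_\varphi(\dot w a,f)=\widetilde{\varphi}^{M}_{L}(a')\,B^M(\dot w a,f)$, a compact set of relevant $a'$ fixed by the support of $f$ modulo $Z_M$ (using that $Z_M^2$ has finite index in $Z_M$), and Lemma \ref{lem:cutoff-positive} to make the coefficient positive on that set for $\kappa$ large. The differences are cosmetic: you read off compactness from the $L$-component in $\Omega_w\simeq U^-_{M,w^{-1}}\times\dot wL\times U^-_{M,w}$ rather than from $C(w)\simeq U_M\times Z_M\times A'\times U^{-}_{M,w}$, and you state explicitly that $A'_w$ should meet $Z_M$ in a finite group, which the paper uses without saying so.
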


\begin{proof}
Using
\[
  C(w)\simeq U_M\times Z_M\times A'\times U_{M,w}^{-},
\]
the compact support of $f$ modulo $Z_M$, hence also compactly supported modulo $Z_M^2$, as $Z_M^2$ is of finite index in $Z_M$, gives compact subsets
\[
  U_1\subset U_M,\qquad
  U_2\subset U_{M,w}^{-},\qquad
  K'\subset A'
\]
such that
\[
  f(x\dot w a u)\neq 0
\]
implies
\[
  x\in U_1,\qquad u\in U_2,\qquad
 a=z^2a'\quad\text{with }z\in Z_M,\ a'\in K'.
\]
By Lemma~\ref{lem:cutoff-positive}, after enlarging $\kappa$ if
necessary, we have
\[
  \widetilde{\varphi}^{M}_{L}(a')>0
  \qquad\text{for every }a'\in K'.
\]

Recall that
\[
  B_{\varphi}^{M}(\dot w a,f)
  =
  \widetilde{\varphi}^{M}_{L}(a')\,
  B^{M}(\dot w a,f).
\]
If $B^{M}(\dot w a,f)\neq0$, then necessarily $a'\in K'$, so the
coefficient on the right-hand side is positive and
$B_{\varphi}^{M}(\dot w a,f)\neq0$.  The reverse implication follows
immediately from the same identity.
\end{proof}

\begin{remark}\label{rem:translated-cutoffs}
No converse is asserted in Lemma~\ref{lem:cutoff-positive}: a twisted
$U_L$-orbit may meet $X_\kappa$ even when its distinguished point
$\dot w a'$ does not lie in $X_\kappa$.

The compact-exhaustion property is preserved under the restrictions and
fixed left translations used in the induction.  More precisely, if
$J\subset M$ is a Levi subgroup and $\dot v\in M$ is fixed, then
\[
  \varphi_{\kappa,\dot v}^{J}(j):=\varphi_\kappa(\dot vj),
  \qquad j\in J,
\]
is the characteristic function of
\[
  X_{\kappa,\dot v}^{J}
  :=
  J\cap\dot v^{-1}X_\kappa .
\]
For every compact $C\subset J$, the set $\dot vC$ is compact in $M$;
hence $C\subset X_{\kappa,\dot v}^{J}$ for all sufficiently large
$\kappa$.  Therefore Lemma~\ref{lem:partial-full-nonvanishing} applies
at each Levi step occurring in the proof of Proposition \ref{Asymptotic-formula-0}.
\end{remark}


\paragraph{Compatibility with the asymptotic expansion.}
The preceding construction prescribes $\varphi_\kappa$ on
$\mathcal{N}(N')$.  We choose its extension to $M$ so that
$\varphi_\kappa=\mathbf{1}_{X_\kappa}$ for a family satisfying
\eqref{eq:compact-exhaustion}, and so that the same property holds for
the fixed left translates restricted to the Levi subgroups occurring
in Proposition \ref{Asymptotic-formula-0}.  This choice does not change the identity
\[
  j_{\sigma,\kappa}(\dot n)
  =
  B_{\varphi_\kappa}^{M}(\dot m,f),
\]
because all values of $\varphi_\kappa$ entering that identity are on
$\mathcal{N}(N')$.

\begin{remark}Replace Lemma 5.7 and Lemma 5.8  in \cite{CST17}  by Lemma \ref{lem:cutoff-positive} and Lemma \ref{lem:partial-full-nonvanishing} respectively, the proofs of Lemma 5.12, 5.13, and 5.14 in \cite{CST17} hold verbatim in our setting.   
\end{remark}

\begin{prop}\label{Asymptotic-formula-0} Assume $\varphi$ satiefies Assumption \ref{conjugate-invariance}.
Fix an auxiliary function $f_0\in C^{\infty}_c(M;\omega_{\pi})$ with $W_{f_0}(e)=1$. Then for each $f\in C_c^{\infty}(M;\omega_{\pi})$ with $W_f(e)=1$ and $w'\in B(M)$ with $d_B(e,w')=1$, there exists a function $f_{w'}\in C^{\infty}_c(\Omega_{w'};\omega_{\pi})$ such that for any $w\in B(M)$ and $m\in M$ satisfying Assumption \ref{involution}, we have 
$$B^M_{\varphi}(m,f)=B^M_\varphi(m,f_1)+\sum_{w'\in B(M), d_B(w',e)=1}B^M_{\varphi}(m,f_{w'})$$ where $f_1(m):=\sum_{m=m_1c}f_0(m_1)B^M(\dot{e}c, f)=\sum_{m=m_1c}f_0(m_1)\omega_{\pi}(c)$, the sum runs over all possible decompositions $m=m_1c$ with $m_1\in M_{D}$, and $c\in A_e=Z_M$.
\end{prop}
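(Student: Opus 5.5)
This proposition is the analogue of Lemmas 5.12–5.14 of \cite{CST17}, and I would prove it the same way: first peel off a smooth piece of $f$ supported near the identity cell whose Bessel integral only sees the central character, then push the remainder onto the open set $\bigcup_{d_B(w',e)=1}\Omega_{w'}$ by a partition of unity. The key observation is that $B^M_\varphi(\cdot,f)$ is linear in $f$. Hence it suffices to decompose $f$ itself as
\[
f=f_1+\sum_{d_B(w',e)=1}f_{w'},\qquad f_{w'}\in C_c^\infty(\Omega_{w'};\omega_\pi),
\]
and then apply $B^M_\varphi(m,\cdot)$ to both sides.

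\textbf{Step 1: the function $f_1$.} I first check that $f_1$ is well defined and lies in $C_c^\infty(M;\omega_\pi)$.
\begin{itemize}
\item Two decompositions $m=m_1c$ differ by an element of the finite group $Z_M\cap M_D$. The sum therefore has finitely many terms, and it is the averaging that makes $f_1$ $\omega_\pi$-equivariant.
\item Compact support modulo $Z_M$ and smoothness are inherited from $f_0$.
\end{itemize}
I then compare Bessel integrals on the identity cell $C(e)=U_MZ_MU_M$ (here $w=e$ corresponds to $L=M$, $A_e=Z_M$). For $m=\dot e a$ with $a\in Z_M$, Corollary \ref{wa-descent-formula} together with the factorization $B^M_\varphi(\dot wa,f)=\widetilde\varphi^M_L(a')B^M(\dot wa,f)$ gives
\[
B^M_\varphi(a,f)=\widetilde\varphi^M_M(a')\,B^M(a,f)=\widetilde\varphi^M_M(a')\,\omega_\pi(a)\,W_f(e)=\widetilde\varphi^M_M(a')\,\omega_\pi(a).
\]
The same identity holds for $f_1$, because $W_{f_1}(e)=W_{f_0}(e)=1$. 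Hence $B^M_\varphi(\cdot,f-f_1)$ vanishes on $C(e)$, and the cutoff factor $\widetilde\varphi$ is common to both sides.

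\textbf{Step 2: support of $f-f_1$.} Here I follow Lemma 5.13 of \cite{CST17}, with Lemma \ref{lem:partial-full-nonvanishing} replacing Lemma 5.8. The point is to upgrade the vanishing of $B^M(\cdot,f-f_1)$ on $C(e)$ to the following statement: the function
\[
h_{f-f_1}=\int\!\!\int (f-f_1)\,\psi^{-1}
\]
attached to the cell is zero, where the integral is the $U_M\times U_M$-integral appearing in Lemma \ref{Omega_w-surjectivity}, now for the cell $C(e)$. The tool is the surjectivity/uniqueness statement of Lemma \ref{Omega_w-surjectivity}. Since $C(e)$ is closed in $\Omega_e=M$, one can then modify $f-f_1$ by a function with vanishing $\psi$-twisted integrals and arrange that it vanishes on a neighborhood of $C(e)$. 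This is the step I expect to be the main obstacle. In \cite{CST17} it uses the Bernstein–Zelevinsky-type exact sequence
\[
0\to C_c^\infty(\Omega_e\smallsetminus C(e))\to C_c^\infty(\Omega_e)\to C_c^\infty(C(e))\to 0,
\]
and here one must additionally make sure the twisted actions and the cutoff are compatible.
\begin{itemize}
\item The compatibility of the twisted action is provided by $\Theta_M|_{U_M}=\mathrm{id}$ from Lemma \ref{w_0-action-on-roots}.
\item The cutoff $\varphi$ is locally constant under $U_0$ by Assumption \ref{conjugate-invariance}. I would choose the neighborhood of $C(e)$ to be $U_0$-twisted-invariant so that $\varphi$ does not spoil the argument.
\end{itemize}
Once $f-f_1$ vanishes near $C(e)$, its support lies in
\[
\Omega_e\smallsetminus C(e)=\bigcup_{e<w'}C(w')=\bigcup_{d_B(w',e)=1}\Omega_{w'},
\]
where the last equality uses that the $\Omega_{w'}$ are open and that every $w'>e$ lies above some Bessel-distance-one element.

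\textbf{Step 3: partition of unity.} A smooth $Z_M$-invariant partition of unity subordinate to the cover $\{\Omega_{w'}\}_{d_B(w',e)=1}$ splits $f-f_1$ into pieces $f_{w'}\in C_c^\infty(\Omega_{w'};\omega_\pi)$. Applying $B^M_\varphi(m,\cdot)$ and using linearity gives the stated identity for every $m$. Assumption \ref{involution} is needed only so that each term $B^M_\varphi(m,f_{w'})$ can later be descended via Proposition \ref{Descentformula}; by Lemma \ref{trivial-stabilizer} it holds generically in our case.
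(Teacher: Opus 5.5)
\textbf{There is a genuine gap, at the end of your Step 2.} Step 1 and the first half of Step 2 match the paper, which cites \cite[Lemma 5.13]{CST17} with Lemmas \ref{lem:cutoff-positive} and \ref{lem:partial-full-nonvanishing} in place of Lemmas 5.7 and 5.8 there. The passage from $M\setminus C(e)$ to the sets $\Omega_{w'}$ is where it fails. The claimed equality
\[
\Omega_e\setminus C(e)=\bigcup_{d_B(w',e)=1}\Omega_{w'}
\]
is false. The left side is the union of $C(w)$ over \emph{all} $w\in W(M)$ with $w\neq e$. The right side contains only cells $C(w)$ with $w\ge w_Mw_L^{-1}$ for some maximal proper Levi $L$. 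In $W(M)\simeq S_6$ these Bessel-distance-one elements have length $15-\ell(w_L)\ge 5$. So $C(s_{\alpha_1})$, and in fact every cell of length at most $4$, lies in $M\setminus C(e)$ but in no $\Omega_{w'}$. Note that $s_{\alpha_1}\notin B(M)$, since $s_{\alpha_1}\alpha_3=\alpha_1+\alpha_3$ is positive but not simple. Consequently the partition of unity in your Step 3 does not apply: a function supported in $M\setminus C(e)$ is in general not a sum of functions in $C_c^\infty(\Omega_{w'};\omega_\pi)$ with $d_B(w',e)=1$.

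\textbf{The missing ingredient.} You need the second reduction that the paper takes from \cite[Lemma 5.14]{CST17}. Starting from $f_2'\in C_c^\infty(M\setminus C(e);\omega_\pi)$, one produces $f_2\in C_c^\infty(\Omega_1;\omega_\pi)$, where $\Omega_1=\bigcup_{d_B(w',e)=1}\Omega_{w'}$. It satisfies $B^M_\varphi(m,f_2)=B^M_\varphi(m,f_2')$ for all $m$, but only for $\varphi$ sufficiently large. This rests on the non-Bessel cells $C(w)$, $w\notin B(M)$, carrying no $\psi$-twisted contribution. For such $w$ there is a simple root $\beta$ with $w\beta>0$ not simple, and on $U_\beta$ the two twisting characters disagree. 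These cells can therefore be peeled off one at a time without changing the partial Bessel integrals.

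\textbf{Your opening reduction.} For the same reason, you cannot decompose $f$ itself as $f=f_1+\sum f_{w'}$. Both Lemma 5.13 and Lemma 5.14 of \cite{CST17} only produce functions with the same partial Bessel integrals. That is why the proposition is an identity of the functionals $B^M_\varphi(m,\cdot)$, not of the functions themselves.
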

\begin{proof}
     Write $m=m_1c$ with $m_1\in M_{D}$ and $c\in A_e=Z_M$, then there are only finitely many such decompositions, indexed by elements in the transverse torus $A^e_e=M_{D}\cap Z_M$. 
Define
$$f_1(m):=\sum_{m=m_1c}f_0(m_1)B^M(\dot{e}c, f)=\sum_{m=m_1c}f_0(m_1)\omega_{\pi}(c)$$ Then $f_1(m)\in C_c^{\infty}(M;\omega_{\pi})$. For $a\in A_e=Z_M$, $U_{M,a}^{\Theta_M}=U^{\Theta_M}_{M,e}$, we have
$$B^M_\varphi(\dot{e}a,f_1)=\omega_\pi(a)\int_{U_{M,e}^{\Theta_M}\backslash U_M}\int_{U_M} f_1(xu)\varphi(\Theta_M(u^{-1})a' u)\psi^{-1}(xu)dx du$$$$=\omega_\pi(a)W_{f_1}(e)\int_{U_{M,e}^{\Theta_M}\backslash U_M} \varphi(\Theta_M(u^{-1})a'u)du.$$ 
As $W_{f_1}(e)=\int_{U_M} f_1(x)\psi^{-1}(x)dx$, while $x\in U_M\subset M_{D}$, so $f_1(x)=f_0(x)$. Thus $W_{f_1}(e)=W_{f_0}(e)=1$, and we obtain
 $B^M_\varphi(\dot{e}a, f-f_1)=0$ for all $a\in A_e$. Apply \cite[Lemma 5.13]{CST17}, there exists $f_2'\in C_c^\infty(\Omega_e^\circ,;\omega_\pi)$, where $\Omega_e^\circ=\Omega_e-C(e)=M-B_M$, such that $B^M_\varphi(m,f-f_1)=B^M_\varphi(m, f_2')$ for all $m\in M$ . Let $\Omega_1=\cup_{w\in B(M), w\neq e}\Omega_w=\cup_{w\in B(M), d_B(w,e)=1}\Omega_w$, and $\Omega_0=M-C(e)=\Omega^\circ_e$. By \cite[Lemma 5.14]{CST17}, there exists $f_2\in C^\infty_c(\Omega_1;\omega_\pi)$, so that $B^M_\varphi(m, f_2)=B^M_\varphi(m, f_2')=B^N_\varphi(m, f-f_1)$ for sufficiently large $\varphi$ depending only on $f_1$. Finally, by partition of unity, for each $w'\in B(M)$ with $d_B(w',e)=1$, we can find $f_{w'}\in C^\infty_c(\Omega_{w'};\omega_\pi)$, such that $f_2=\sum_{w'\in B(M), d_B(w',e)=1}f_{w'}$. 
\end{proof}

\begin{prop}\label{Asymptoticformula} Assume $\varphi$ satisfies Assumption \ref{conjugate-invariance}.
    Fix an auxiliary function $f_0\in C^{\infty}_c(M;\omega_{\sigma})$ with $W_{f_0}(e)=1$. Then for each $f\in C_c^{\infty}(M;\omega_{\sigma})$ with $W_f(e)=1$ and $w'\in B(M)$ with $d_B(e,w')\ge 1$, there exists a function $f_{w'}\in C^{\infty}_c(\Omega_{w'};\omega_{\sigma})$ such that for any $m=u_1\dot{w}au_2\in C_r(\dot{w})$ satisfying Assumption \ref{involution} with $\dot{w}=\dot{w}^M_L\in B(M)$,
$$B^M_{\varphi}(m,f)=\sum_{a=bc}\omega_\sigma(c)B^M_\varphi(u_1 \dot{w}bu_2, f_0)+\sum_{w'\in B(M), d_B(w',e)\ge 1}B^M_{\varphi}(m,f_{w'})$$ 
where $a=bc$ runs over the possible decompositions of $a\in A_w$ with $b\in A^e_w$ and $c\in A_e=Z_M.$
\end{prop}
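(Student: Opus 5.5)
The plan is to deduce Proposition \ref{Asymptoticformula} from Proposition \ref{Asymptotic-formula-0}, replacing its leading term $B^M_\varphi(m,f_1)$ by the displayed sum and absorbing the remainders into functions supported on the open sets $\Omega_{w'}$ with $d_B(w',e)\ge1$.

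\textbf{Step 1: reduce to the leading term.} First apply Proposition \ref{Asymptotic-formula-0} to $f$. This gives
\[
B^M_\varphi(m,f)=B^M_\varphi(m,f_1)+\sum_{d_B(w',e)=1}B^M_\varphi(m,f_{w'}),
\]
with $f_1(m)=\sum_{m=m_1c}f_0(m_1)\omega_\sigma(c)$. The second sum already has the required form, so it remains only to analyze $B^M_\varphi(m,f_1)$.

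\textbf{Step 2: identify the leading term.} Take $m=u_1\dot w a u_2\in C_r(\dot w)$ and write $a=bc$ with $b\in A^e_w=A_w\cap M_D$ and $c\in Z_M$. Such decompositions exist up to the finite group $Z_M\cap M_D$, since $A^e_wZ_M$ has finite index in $A_w$. Unfold $f_1$ inside the defining double integral of $B^M_\varphi(m,f_1)$:
\[
f_1(xmu)=\sum_{xmu=m_1c}f_0(m_1)\omega_\sigma(c).
\]
Because $U_M\subset M_D$, each decomposition $xmu=m_1c$ corresponds exactly to a decomposition $a=bc$ with $m_1=x u_1\dot w b u_2 u$. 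Here I also use that $\dot w\in M_D$, up to an element of $Z_M\cap M_D$ that can be absorbed into $b$.

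Interchanging the finite sum with the integrals then gives formally
\[
\sum_{a=bc}\omega_\sigma(c)\int\!\!\int f_0(xu_1\dot w bu_2u)\,\varphi(\Theta_M(u)^{-1}m'u)\,\psi^{-1}(xu)\,dx\,du .
\]
Two points must be checked before this equals the displayed sum.

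\emph{The twisted centralizers agree.} $U^{\Theta_M}_{M,m}$ coincides with $U^{\Theta_M}_{M,u_1\dot w b u_2}$, because $c$ is central and, by Lemma \ref{w_0-action-on-roots}, $\Theta_M$ fixes $U_M$ pointwise.

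\emph{The cutoff arguments agree.} $\varphi(\Theta_M(u)^{-1}m'u)$ must coincide with the cutoff argument for $u_1\dot w bu_2$. Both $m'$ and $(u_1\dot wbu_2)'$ are obtained by stripping off $Z_M^2$ through the twisted action $a\mapsto z^2a$. They therefore differ by an element of $Z_M$ of absolute value one, possibly times a root of unity, and $\varphi$ is invariant under such elements.

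\textbf{Step 3: the main obstacle.} I expect this to be the precise matching of the normalization $m'$ with the decomposition $a=bc$. The factor $c$ is only defined modulo $Z_M\cap M_D$, while $m'$ is defined modulo $Z_M^2$. I would handle this by noting that $\varphi$ is assumed invariant under central elements of absolute value $1$. If $|c|$ is not a square, I would enlarge the index set of decompositions, choosing coset representatives of $Z_M/Z_M^2$ compatible with $A'_w$, and use the finiteness of $Z_M/Z_M^2Z_M^1$ to keep the sum finite.

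\textbf{Step 4: higher remainders.} If the statement is meant with general $d_B\ge1$ (arising from iterating), I would repeat the argument on each $f_{w'}$ via the descent formula Proposition \ref{Descentformula}, induct on Bessel distance, and merge all resulting remainders. Lemma \ref{trivial-stabilizer} and Corollary \ref{wa-descent-formula} guarantee Assumption \ref{involution} at each Levi step, and Remark \ref{rem:translated-cutoffs} ensures the cutoffs remain compactly exhaustive.
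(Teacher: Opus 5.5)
Your route is the paper's: apply Proposition \ref{Asymptotic-formula-0}, then unfold $f_1$ through the decompositions $a=bc$, using $U_M,\dot w\subset M_D$. Two small differences favour your version. First, unfolding $f_1(xmu)$ directly avoids the paper's change of variables to $x'$, $v^-$, which it undoes at the end anyway. Second, you correctly note that for the statement as written, the $d_B=1$ remainders from Proposition \ref{Asymptotic-formula-0} already suffice. The paper continues the induction on $d_B$ so that each remainder becomes one of the structured functions $f_{1,w'}$ built from $h_{1,w'}$. That is the form Proposition \ref{uniform-smooth} and the proof of Theorem \ref{main-theorem} actually use, and your Step 4 only gestures at it.

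\textbf{Where the argument fails.} You are right in Step 3 that the normalization $m'$ is the delicate point. However, the claim in Step 2 that $m'$ and $(u_1\dot wbu_2)'$ differ by a central element of absolute value one is false in general.
\begin{itemize}
\item Since $\Theta_M(z)=z^{-1}$ on $Z_M$, the twisted orbit of $m$ is $mZ_M^2$, while that of $u_1\dot wbu_2=mc^{-1}$ is $mc^{-1}Z_M^2$. Any two normalizations therefore differ by an element of $cZ_M^2$.
\item If $c=\gamma_0^\vee(t)$ with $v_F(t)$ odd, no element of $cZ_M^2$ has absolute value one. But $\varphi$ is only assumed invariant under central elements of absolute value one, so the two cutoffs need not agree.
\item Your Step 3 remedy does not repair this. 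Enlarging the index set of decompositions changes the right-hand side instead of proving it.
\end{itemize}
The paper's proof passes over exactly the same point without comment.

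\textbf{What the unfolding actually gives.} It gives the expression you wrote just before your two checks, with the cutoff still evaluated at $m'$. Equivalently, the leading term is $\sum_{a=bc}\omega_\sigma(c)B^M_{\varphi^{\zeta}}(u_1\dot wbu_2,f_0)$. Here $\varphi^{\zeta}(\cdot)=\varphi(\cdot\,\zeta)$ is the right shift by the central element $\zeta\in cZ_M^2$ with $m'=(u_1\dot wbu_2)'\zeta$. The element $\zeta$ can be taken of absolute value one exactly when $v_F(t)$ is even. Either form depends only on $f_0$, $\omega_\sigma$ and $\varphi$, not on $f$. That is all the cancellation in the proof of Theorem \ref{main-theorem} needs. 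So state and prove the leading term in this form, rather than trying to force the two cutoffs to coincide.
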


\begin{proof}
    This is Proposition 6.7 in \cite{DS25}, but the proof needs a slight modification. For the convenience of the reader, we provide a complete proof here. By Lemma \ref{Twistedcentralizer}, $U^{\Theta_M}_{M,m}\subset u_2^{-1}U_{M,w}^+u_2$. On the other hand, $$U_M=u_2^{-1}U_{M,w}^+ U_{M,w}^- u_2^{-1}=u_2^{-1}U_{M,w}^+ u_2(u_2^{-1}U_{M,w}^-u_2).$$ It follows that if we write $u=u'(u_2^{-1}u^-u_2)$ where $u'=u_2^{-1}u^+u_2$ with $u^+\in U_{M,w}^+$, $u^-\in U_{M,w}^-$, then
 $$B^M_{\varphi}(m,f_1)=\int_{U_{M,m}^{\Theta_M}\backslash u_2^{-1}U_{M,w}^+u_2}\int_{U_{M,w}^-}\int_{U_M}f_1(xu_1\dot{w}au_2u'u_2^{-1}u^-u_2)$$$$\cdot\varphi(\Theta_M(u_2^{-1}(u^-)^{-1}u_2 {u'}^{-1})u_1\dot{w} a' u_2 u' u_2^{-1}u^-u_2)\psi^{-1}(xu' u_2^{-1}u^-u_2)dxdu^-du'.$$
 As $u^+=u_2u'u_2^{-1}\in U_{M,w}^+$ and $a\in A_{w}$, we have $xu_1\dot{w}au_2^{-1}u'u_2^{-1}u^-u_2=xu_1(\dot{w}u_2u'u_2^{-1}\dot{w}^{-1})\dot{w}au^-u_2$. Let $x'=xu_1(\dot{w}u_2u'u_2^{-1}\dot{w}^{-1})\in U_{M,w}^+$, $v^-=u^-u_2\in U_{M,w}^-$. By compatibility of $\psi$ with $\dot{w}$, $\psi(x')=\psi(x)\psi(u_1)\psi(u')$, thus
$$B^M_\varphi(m,f_1)=\psi(u_1u_2)\int_{U_{M,m}^{\Theta_M}\backslash u_2^{-1}U_{M,w}^+u_2}\int_{U_{M,w}^-}\int_{U_M}f_1(x'\dot{w} av^-)$$$$\cdot\varphi(\Theta_M((v^-)^{-1}u_2 {u'}^{-1})u_1\dot{w} a' u_2 u' u_2^{-1}v^-)\psi^{-1}(x' v^-)dx'dv^-du'$$
 By the construction of $f_1$, decompose $x'\dot{w}av^-=m_1c$ with $m_1\in M_{D}$ and $c\in A_e=Z_M$, which is equivalent to $$ac^{-1}=\dot{w}^{-1}(x')^{-1}m_1(v^-)^{-1}.$$ Since we pick $\dot{w}\in M_{D}$, and $x', v^-\in U_M\subset M_{D}$, this is saying that $b:=ac^{-1}\in M_{D}\cap A_{w}=A_{w}^e$. It follows that $$f_1(x'\dot{w}av^-)=\sum_{a=bc}f_0(x'\dot{w}bv^-)\omega_\pi(c),$$ and consequently
$$B^M_\varphi(m,f_1)=\sum_{a=bc}\omega_\pi(c)B^M_\varphi(u_1 \dot{w}bu_2, f_0)$$ where $a=bc$ runs over the possible decompositions of $a\in A_w$ with $b\in A^e_w$ and $c\in A_e=Z_M.$

 For $w'=w^M_L\in B(M)$, let $h_{w'}=h_{f_{w'}}\in C^\infty_c(L;\omega_\pi)$ be the image of $f_{w'}$ under the surjective map $C^\infty_c(\Omega_{w'}; \omega_\pi)\twoheadrightarrow C^{\infty}_c(L;\omega_\pi)$. Pick $h_0\in C^\infty_c(L;\omega_\pi)$ normalized so that $B^L_{\varphi_{\dot{w}'}}(\dot{e}, h_0)=\frac{1}{\vert Z_L\cap A^{w'}_{w'}\vert}$, and $B^M(b, h_0)=0$ for $b\in A^{w'}_{w'}$ but $b\notin Z_M\cap A^{w'}_{w'}$. Similar to the construction of $f_1$ from $f$, let \[h_{1,w'}(l)=\sum_{l=l_1c}h_0(l_1)B^L(c,h_{w'}),\] the sum is taken over all possible decompositions $l=l_1c$ with $l_1\in L_{D}$ and $c\in A_{w'}=Z_L$. Apply Lemma \ref{lem:partial-full-nonvanishing} to the Levi subgroup $L$, the proof of \cite[Proposition 5.4]{CST17} also carries over verbatim, thus 
$B^L_{\varphi_{\dot{w}'}}(a, h_1)=B^L_{\varphi_{\dot{w}'}}(a, h_{w'})$ for all $a\in A_{w'}=Z_L$. Choose $f_{1,w'}$ which maps to $h_{1,w'}$ under $C^{\infty}_c(\Omega_{w'}, \omega_{\pi}) \twoheadrightarrow C^{\infty}_c(L;\omega_{\pi})$, and apply Corollary \ref{wa-descent-formula}, we obtain
$B^M_{\varphi}(w'a,f_{1,w'})=B^L_{\varphi_{\dot{w}'}}(a, h_{1,w'})=B^L_{\varphi_{\dot{w}'}}(a,h_{w'})=B^M_\varphi(w'a,f_{w'})$. So $B^M_\varphi(w'a, f_{w'}-f_{1,w'})=0$, for all $a\in A_{w}$. By \cite[Lemma 5.13, 5.14]{CST17} again, together with partition of unity, there exists $f_{w',w''}\in C^\infty_c(\Omega_{w''};\omega_\pi)$ for $w'<w''\in B(M)$ such that $d_B(w',w'')=1$, and
$$B^M_{\varphi}(m, f_{w'})=B^M_{\varphi}(m,f_{1,w'})+\sum_{w''\in B(M), w''>w', d_B(w'',w')=1}B^M_{\varphi}(m,f_{w',w''})$$ for sufficiently large $\varphi$. Proceed by repeating this process and do induction on the Bessel distance $d_B$, together with the expansion in Proposition \ref{Asymptotic-formula-0}, we obtain the asymptotic expansion formula as desired.
\end{proof}

\subsubsection{Uniform smoothness} In this subsection we prove certain uniform smoothness of partial Bessel functions in the second part of the asymptotic expansion formula in Proposition \ref{Asymptoticformula}.

\begin{defn} A function $B$ on a p-adic group $H$ is \textbf{uniform smooth} if it is uniformly locally constant, i.e. for each $h\in H$, there exists an open compact subgroup $K_0\subset H$ independent of $h\in H$, such that
\[B(h k)=B(h), \ \forall k\in K_0\]    
\end{defn}

Recall for each $w'=w_M w_L^{-1}\in B(M)$, we associated to $h_{w'}\in C^\infty_c(L;\omega_\pi)$ the function 
\[h_{1,w'}=\sum_{l=l_1c}h_0(l_1)B^L(c,h_{w'})\] with $h_0\in C^\infty_c(L;\omega_\pi)$ chosen as in the proof of Proposition \ref{Asymptoticformula}. Suppose $m=u_1^-\dot{w}'lu_2^-$, where $l=v_1\dot{w}a v_2\in C^L(w)$, a Bruhat cell in $L$ with $z\in Z_M$ with $\dot{w}\in L_{der}$. Write $l=z^2l'$ with $z\in Z_M$, then
$$B^L_\varphi(u_1^-,u_2^-,l',h_{1,w'})=\int_{U^{\Theta_L}_{L,l,n_0}\backslash U_L}\int_{U_L}h_{1,w'}(xl'u)\varphi(\Theta_M((n_0u)^{-1})\dot{w}'l'u)\psi^{-1}(xu)dxdu.$$ Decompose $xl'u=l_1c$ with $l_1\in L_{der}$, $c\in Z_L$. By construction, $l'=v_1\dot{w}a'v_2$ where $a=z^2a'$. As $x, u, v_1, v_2,\dot{w}\in L_{der}$, the decomposition $xl'u=l_1c$ is equivalent to $a'=bc$ with $b\in A\cap L_{der}$ and $c\in Z_L$.
Let $b=z_b^2b'$, $c=z_c^2c'$ with  $z_b,z_c\in Z_M$, $b'\in A'$, $c'\in Z_L'$, then $a'=(z_bz_c)^2b'c'$ implies that $(z_bz_c)^2=1$ and $a'=b'c'$. Consequently, 
\[h_{1,w'}(xl'u)=\sum_{a'=b'c'}h_0(xl_{b'}u)B^L(c' , h_{w'})\] where $l_{b'}= v_1\dot{w} b' v_2$. Recall that $U_{L, l, n_0}^{\Theta_L}:=n_0 \Theta_M(U_{L^w,\Theta_{L^w}(l^{\dot{w}})}^{\Theta_{L^w}}) n_0^{-1}\cap U_{L.l}^{\Theta_L}$ with  $n_0=u_2^-\Theta_M(u_1^-)$. Since $U_{L,l'}^{\Theta_L}=\{u\in U_L:\Theta_L(u^{-1})l_{b'}uc'=l_{b'}c'\}=\{u\in U_L:\Theta_L(u^{-1})l_{b'}u=l_{b'}\}=U_{L, l_{b'}}^{\Theta_L}$, similarly we have $U^{\Theta_{L^w}}_{L^w, \Theta_{L^w}(l'^{w'})}=U^{\Theta_{L^w}}_{L^w, \Theta_{L^w}(l_{b'}'^{w'})}$. Thus $U^{\Theta_L}_{L,l',n_0}=U^{\Theta_L}_{L, l_b',n_0}$ and
$$B^L_\varphi(u_1^-,u_2^-,l', h_{1,w'})=\sum_{a'=b'c'}B^L(c', h_{w'})\int_{U_{L,l'_{b'},n_0}^{\Theta_L}\backslash U_L}\int_{U_L}h_0(xl_{b'}u)$$$$\cdot\varphi(\Theta_M(u)^{-1}\Theta_M(n_0)l_{b'}uc')\psi^{-1}(xu)dxdu=\sum_{a'=b'c'}B^L(c',h_{w'})B^L_{\varphi^{c'}}(u_1^-,u_2^-, l_{b'}, h_0)$$ where $\varphi^{c'}(\cdot):=\varphi(\cdot c')$ is the right shift of $\varphi$ by $c'$.

Let $\mathrm{Pr}_A: C(w_M)\simeq U_M\times w_MA\times U_{M,w_M}^-\simeq U_M\times w_M A\times U_M\rightarrow A$ be the projection map onto the maximal split torus $A$. For each $w'\in B(M)$, set \[A_{R,w'}:=\mathrm{Pr}_A(\mathrm{Im}\mathcal{N})\cap A_{w'}, \qquad A_{R,w'}':=\mathrm{Pr}_A(\mathrm{Im}\mathcal{N})\cap A_{w'}'.\] Let $A_{R',w'}$ be the subtorus obtained in the same way as $A_{R,w'}$ by setting $c_5=1$. Then by the $Z_M^0$-equivariance of the map $\mathcal{N}:\dot{n}\mapsto \dot{m}$ via $\dot{w}_0^{-1}\dot{n}=\dot{m}\dot{n}'\overline{\dot{n}}$ for the typical representative $\dot{n}=\prod_{i=0}^4x_{\gamma_i}(c_i)$  of $ Z_M^0U_M\backslash N'$ with $\underline{c}=(c_0,\cdots, c_4)\in R'$, we can identify $A'_{R,w'}$ with $A_{R',w'}$.
\begin{prop} \label{uniform-smooth}Assume $\varphi$ satisfies Assumption \ref{conjugate-invariance}. Let $ L\subset M$ be a Levi subgroup of $M$, and $m=\mathcal{N}(n)$ where $n=\prod_{i=0}^5 x_{\gamma_i}(c_i)$ with $R$ the chosen set of orbit space representatives of $U_M\backslash N'$. Then $m$ satisfies Assumption \ref{involution}. 
Decompose $m=u_1\dot{w}_Ma u_2=u_1^-\dot{w}'lu_2^-$ as before, and suppose $a\in A^{w'}_{w_M} A_{w'}\subset A$ with $ w'=w_M w_L^{-1}\in B(M)$ and $d_B(w',e)\ge 1$. Write $a=bc$ with $b\in A^{w'}_{w_M}$, $c\in A_{w'}$. Decompose $A_{w'}=Z_M^2A'_{w'}$ and write $c=z^2c'$ with $z\in Z_M$, $c'\in A'_{w'}$. Then
$$B^M_\varphi(m,f_{1,w'})=\omega_\pi(z^2)\psi(u_1^-u_2^-)\sum_{a'=b'c'}B^L(c', h_{w'})B^L_{\varphi^{c'}}(u_1^-, u_2^-,l_{b'}, h_0)$$ is uniformly smooth as a function of $c'\in A_{R,w'}'=A_{R',w'}$.
\end{prop}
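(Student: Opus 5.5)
The plan has three steps: verify Assumption \ref{involution}, prove the displayed formula by descent, and then prove the uniform smoothness, which carries the real content.

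\emph{Assumption \ref{involution}.} By Theorem \ref{orbit-space-representative}, $U_M$ acts freely on $N'$, so $U_{M,n}$ is trivial for every $n\in N'$. Lemma \ref{trivial-stabilizer} then shows that, off a set of measure zero, $m=\mathcal{N}(n)$ satisfies Assumption \ref{involution} with $\Theta_M=\mathrm{Ad}(\dot w_0^{-1})$. In particular $U^{\Theta_M}_{M,m}$ is trivial. Since $\Theta_M|_{U_M}=1$ by Lemma \ref{w_0-action-on-roots}, any $\Theta_L$ with $\Theta_L|_{U_L}=1$ works.

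\emph{The displayed formula.} Apply the descent formula, Proposition \ref{Descentformula}, to $f_{1,w'}$ and $m=u_1^-\dot w' l u_2^-$. This gives $B^M_\varphi(m,f_{1,w'})=\psi(u_1^-u_2^-)B^L_\varphi(u_1^-,u_2^-,l,h_{1,w'})$, using $h_{f_{1,w'}}=h_{1,w'}$. Next write $l=z^2l'$; the central character pulls out as $\omega_\pi(z^2)$, because $\varphi$ is evaluated at the representative $l'$ by construction. Finally, the computation preceding the proposition expands $h_{1,w'}$ over the decompositions $a'=b'c'$ and identifies the twisted centralizers $U^{\Theta_L}_{L,l',n_0}=U^{\Theta_L}_{L,l_{b'},n_0}$. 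This yields the sum $\sum_{a'=b'c'}B^L(c',h_{w'})B^L_{\varphi^{c'}}(u_1^-,u_2^-,l_{b'},h_0)$. The index set of this sum is finite, being parametrized by the finite transverse torus $A^{w'}_{w'}$ up to squares of $Z_M$.

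\emph{Uniform smoothness in $c'$.} Each summand is handled separately. The factor $B^L(c',h_{w'})$ is the full Bessel function of $h_{w'}\in C^\infty_c(L;\omega_\pi)$ restricted to $Z_L$, so it equals $\omega$-type central values times $W_{h_{w'}}(e)$. It is therefore a character of $c'$ up to a constant, and hence uniformly locally constant on $A'_{w'}$: it is invariant under $Z_L\cap K$ for any open compact $K$ on which the central character of $h_{w'}$ is trivial. The substantive point is the factor $B^L_{\varphi^{c'}}(u_1^-,u_2^-,l_{b'},h_0)$. Here $h_0$ is fixed, and $b'$ ranges over a compact set (a finite set, times the compact part cut out by the support of $h_0$). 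The integrand depends on $c'$ only through the cutoff $\varphi(\,\cdot\,c')$. Since $\varphi=\mathbf 1_{X_\kappa}$ with $X_\kappa$ open and invariant under the twisted $U_0$-action (Assumption \ref{conjugate-invariance}), and the relevant compact set of arguments is controlled by $\mathrm{supp}\,h_0$, I would take $X_\kappa$ bi-invariant under a fixed small open compact subgroup $K_1\subset M$; for example, $X_\kappa=M_{x,\kappa}$ is normalized by $G_{x,0}\cap A$. Then for $k\in K_1\cap Z_L'$ we have $\varphi(\,\cdot\,c'k)=\varphi(\,\cdot\,c')$, independently of $c'$. To make this legitimate on the image of $\mathcal{N}$, I would use Corollary \ref{finiteetale3} to parametrize $A'_{R,w'}=A_{R',w'}$ by the $t'_i$ coordinates. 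I would then check that translating $c'$ by $k$ corresponds to translating $\dot n$ within a $Z_M^0$-orbit modulo $U_M$, so that the cutoff $\mathbf 1_{\overline N_{0,\kappa}}$ is unchanged; Lemma \ref{lem:root-subgroup-filtration-opposite-N} shows it depends only on $|t|$.

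The main obstacle is this last step. One must show that a single subgroup $K_1$ works for all $c'$ in the non-compact set $A'_{R',w'}$, even though the arguments $\Theta_M(n_0u)^{-1}\dot w' l_{b'}u c'$ are not confined to a compact set. I would first try to exploit that right translation by $c'\in Z_L$ commutes with the $U_L$-twisted conjugation. This lets the invariance be checked on the single element $\dot w' l_{b'}$, and $l_{b'}$ ranges over a compact set.
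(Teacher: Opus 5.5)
There is a genuine gap, and it sits where you place your ``main obstacle.''

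\textbf{The first factor is not a character.} Your claim that $B^L(c',h_{w'})$ is a character of $c'$ times a constant is false. The function $h_{w'}\in C^\infty_c(L;\omega_\pi)$ is $\omega_\pi$-equivariant only under $Z_M$, and it is compactly supported only modulo $Z_M$. But $c'$ runs over $Z_L'$, and $Z_L\supsetneq Z_M$ because $L$ is proper ($d_B(w',e)\ge1$). Since $AU_L$, and hence $Z_L'U_L$, is closed in $L$, the function
\[
c'\mapsto B^L(c',h_{w'})=\int_{U_L}h_{w'}(xc')\psi^{-1}(x)\,dx
\]
vanishes outside a fixed compact subset of $Z_L'$. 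A character never vanishes, so this compact support is what you missed.

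\textbf{This compact support is the key idea of the paper's proof.} The paper writes every decomposition $a=bc$ as $(b\xi^{-1})(\xi c)$ with $\xi$ in the finite group $A^{w'}_{w'}$. It then shows the whole sum vanishes unless $c'\in K=\bigcup_{\xi'}\xi'^{-1}K_1$, a compact set independent of the decomposition. Uniform smoothness therefore only has to be checked on a compact set. There a single open compact $C_0\subset A'_{R,w'}(\mathcal O_F)$ comes from three facts:
\begin{enumerate}
\item the smoothness of $h_{w'}$;
\item $\varphi^{cc_0}=\varphi^c$ for $c_0\in C_0$;
\item $u_1^-,u_2^-$ are singularity-free rational functions of the parameters, hence smooth along the \'etale parametrization of Corollary \ref{finiteetale1}.
\end{enumerate}
You never address the third point. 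As $m=\mathcal N(n)$ moves with $c'$, so do $u_1^-$, $u_2^-$ and $n_0$, not only the cutoff.

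\textbf{Your proposed fixes for the non-compact problem do not work as stated.} Without the compact support, you are left needing uniformity over the non-compact set $A'_{R',w'}$.
\begin{enumerate}
\item You cannot choose $X_\kappa$ bi-invariant at will. Proposition \ref{Bessel-function-integral} already prescribes $\varphi_\kappa$ on $\mathcal N(N')$. Its invariance under right translation by an open compact subgroup of $Z_L'$ is exactly what would need proof.
\item Your verification of that invariance fails. You say translating $c'$ by $k$ moves $\dot n$ within a $Z_M^0$-orbit, so Lemma \ref{lem:root-subgroup-filtration-opposite-N} applies. But $Z_M^0$ is the one-dimensional $\gamma_0^\vee$-torus. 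An element $k\in Z_L'$ has had its $Z_M^2$-part removed, so it is in general not central in $M$ and not in $Z_M^0$.
\item Commuting $c'$ past $u\in U_L$ does not reduce the question to the single element $\dot w'l_{b'}$. The cutoff $\varphi$ is invariant under twisted conjugation only by the compact group $U_0$, not by all of $U_L$.
\end{enumerate}
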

\begin{proof}First of all, $U_{M,n}=1$ by Theorem \ref{orbit-space-representative}. Then Assumption \ref{involution} is satisfied by Lemma \ref{trivial-stabilizer}.
Proposition \ref{Descentformula} and the above argument imply that 
\[B^M_\varphi(m, f_{1,w'})=\psi(u_1^-u_2^-)B^L_\varphi(u_1^-,u_2^-, l,h_{1,w'})=\psi(u_1^- u_2^-)\sum_{a=bc}B^L(c,h_{w'})B^L_{\varphi^c}(u_1^-,u_2^-, l_b, h_0)\] Fix a decomposition $a=bc$ with $b\in A_{w_M}^{w'}$, $c\in A_{w'}$, then all such decompositions are of the form $a=(b\xi^{-1})(\xi c)$ with $\xi$ runs through the finite set $\in A_{w}^{w'}\cap A_{w'}=A_{w'}^{w'}=Z_L\cap L_{der}$. Therefore $\vert\xi\vert=1$, $\varphi^{\xi c}=\varphi^c$ since $\varphi$ depends only on the absolute value. Consequently, $$B^L_\varphi(u_1^-,u_2^-, l, h_{1,w'})=\sum_{\xi\in A_{w'}^{w'}}B^L(\xi c, h_{w'})B^L_{\varphi^{c}}(u_1^-, u_2^-, l_{b\xi^{-1}}, h_0)$$ Write $\xi=\xi_1\xi'$ with $\xi_1\in Z_M$ and $\xi'=\xi \xi_1^{-1}\in A_{w'}'=Z_L'$, then
 $$B^L(\xi c, h_{w'})=\int_{U_M}h_{w'}(x\xi c)\psi^{-1}(x)dx=\omega_{\pi}(\xi_1 z^2)\int_{U_M}h_{w'}(x\xi' c')\psi^{-1}(x)dx$$ The small Bruhat cell $C^L(e_L)=AU_L=Z_M^2 A' U_L$ of $L$ is closed in $L$, thus $Z_L'\subset A'$ is also closed in $L$.
 Since $h_{w'}\in C^\infty_c(L;\omega_\pi)$ is smooth of compact support modulo $Z_M$, there exists compact subsets $K_1\subset Z_L'$ and $U_1\subset U_M$ such that $h_{w'}(x\xi'c')\neq 0$ only if $ \xi'c'\in K_1$ and $x\in U_1$. It follows that the support of \[B^L_\varphi(u_1^-, u_2^-, l,h_{1,w'})=\omega_\pi(z^2)\sum_{\xi\in A_{w'}^{w'}}B^L(\xi c', h_{w'})B^L_{\varphi^{zc'}}(u_1^-, u_2^-, l_{b\xi^{-1}}, h_0)\]
  in $c'\in A_{w'}'=Z_L'$ is compact and independent of the decomposition $a=bc=z^2bc'$, since the right hand side of the above equality does not vanish unless $c'\in K:=\bigcup_{\xi'}\xi'^{-1}K_1$.
  
  On the other hand, by Proposition \ref{BruhatDecomp} and Corollary \ref{finiteetale1},
the induced map $\mathcal{N}$ on $U_M\backslash N'$ is \'etale of degree 2 onto its image. The image $m=\mathcal{N}(n)$ for $n\in R$ lies in the big cell $C(w_M)$ and we have
$$m=u_1 t\dot{w}_M u_2$$ with $t$ of the form $$t=\alpha_1^\vee(t_1)\alpha_2^\vee(t_2)\alpha_3^\vee(t_3)\alpha_4^\vee(t_4)\alpha_5^\vee(t_3)\alpha_6^\vee(t_1).$$ Write $m=u_1t\dot{w}_M u_2=u_1\dot{w}_M au_2$, then $$a=\dot{w}_M^{-1}t\dot{w}_M=\alpha_1^\vee(\frac{t_2}{t_1})\alpha_2^\vee(t_2)\alpha_3^\vee(\frac{t_2^2}{t_3})\alpha_4^\vee(\frac{t_2^3}{t_4})\alpha_5^\vee(\frac{t_2^2}{t_3})\alpha_6^\vee(\frac{t_2}{t_1}).$$ Set \[a_1=\frac{t_2}{t_1}, a_2=t_2, a_3=\frac{t_2^2}{t_3}, a_4=\frac{t_2^3}{t_4}, a_5=\frac{t_2^2}{t_3},a_6=\frac{t_2}{t_1}.\] This shows that $\mathrm{Pr}_A(\mathrm{Im}\mathcal{N})$ is a 4-dimensional subtorus of the form
$$\{a=\alpha_1^\vee(a_1)\alpha_2^\vee(a_2)\alpha_3^\vee(a_3)\alpha_4^\vee(a_4)\alpha_5^\vee(a_3)\alpha_6^\vee(a_1): a_i\in \mathbb{G}_m, i=1,2,3,4\}.$$ In particular, $\mathrm{Pr}_A(\mathrm{Im}\mathcal{N})$ is closed in $A$. Moreover, by Section \ref{gamma_0(w_M)}, $A_M=\{\gamma_0^\vee(t): t\in \mathbb{G}_m\}$, therefore $A_M\cap \mathrm{Pr}_A(\mathrm{Im}\mathcal{N})\neq \emptyset$. Since by Bruhat order, $Z_M=A_M=A_{e}\subsetneq A_{w'}$ for all $w'\in B(M)$, $d_B(w',e)\ge 1$, and $A'_{w'}$ is obtained by depriving the groups of square elements in the center of $M$, i.e., $A_{w'}=Z_M^2 A_{w'}'$, it follows that $A_{R',w'}=A'_{R,w'}=\mathrm{Pr}_A(\mathrm{Im}\mathcal{N})\cap A_{w'}'\neq \emptyset $ for all $w'\in B(M)$ with $d_B(w',e)\ge 1$.  Consequently, the support of $B^L_
\varphi(u_1^-,u_2^-, l, h_{1,w'})$ in $c'\in A'_{R,w'}$ is non-empty and compact for every $w\in B(M)$ with $d_B(w',e)\ge 1$. Moreover, since $h_{w'}$ is smooth, one can find a non-trivial open compact subgroup $C_0\subset A'_{R,w'}$ such that $h_{w'}(x\xi'c'c_0)=h_{w'}(x\xi' c)$ for all $x\in U_1, c'\in A'_{R,w'}$, and $ c_0\in C_0$. Proposition \ref{BruhatDecomp} also implies that the 1-parameter subgroups in $u_1, u_2$, hence $u_1^-$ and $u_2^-$, are rational functions of $(c_0,c_1,c_2,c_3,c_4,c_5)$ without singularities. By Corollary \ref{finiteetale1}, the map $(c_0,c_1,c_2,c_3,c_4,c_5)\mapsto(t_1,t_2,t_3,t_4,t_5,t_6)\mapsto(a_1,a_2,a_3,a_4,a_5,a_6)\in \mathrm{Pr}_A(\mathrm{Im}\mathcal{N})\times \mathbb{G}_m^2$ is \'etale of degree 2. It follows that $u_1^-$ and $u_2^-$ are smooth functions on $\mathrm{Pr}_A(\mathrm{Im}\mathcal{N})\times \mathbb{G}_m^2$ without singularities. Consequently, one can choose $C_0$ such that $u_1^-,u_2^-$ are uniformly smooth on $C_0$. Shrink $C_0$ if necessary so that $C_0\subset A'_{R,w'}(\mathcal{O}_F)$, then $\varphi^{cc_0}=\varphi^{c}$ for all $c_0\in C_0$ and $c\in A'_{R,w'}$. We obtain that
\[B_\varphi^M(m,f_{1,w'})=\omega_\pi(z)B^M_\varphi(u_1\dot{w}_Mbc'u_2, f_{1,w'})\] is uniformly smooth as a function of $c'\in A'_{R,w'}$.
  
\end{proof}

\subsection{The final local coefficient formula}
Let us end this section by connecting the partial Bessel functions defined in section \ref{partial-Bessel-function} to partial Bessel integrals.
In section \ref{PrepLCF}, $Z_M^0U_M\backslash N'$ is identified with the Zariski open subspace of the 5-dimensional affine space $R'$. Let \[\dot{m}=\mathcal{N}(\dot{n})=\mathcal{N}(\underline{c}),\ \  \textrm{with}\  \dot{n}=\prod_{i=0}^4x_{\gamma_i}(c_i), \ \textrm{where}\  \underline{c}=(c_0,c_1,\cdots,c_4)\in R'.\] Let $\sigma$ be a supercuspidal representation of $M$ with central character $\omega_\sigma$. Choose $f\in C^\infty_c(M;\omega_\sigma)$ so that the Whittaker function $W_f$ of $\sigma$ is normalized as before.  Recall that the partial Bessel function is defined as
\[j_{\sigma,\kappa}(\underline{c})=j_{\sigma,\kappa}(\dot{n})=\int_{U_{M,\dot{n}}}W_f(\dot{m}u)\textbf{1}_{\overline{N}_{0,\kappa}}(zu^{-1}\overline{\dot{n}}uz^{-1})\psi^{-1}(u)du\]where $z=\alpha^\vee(\varpi^{d+g}\dot{x}_\alpha)$.

Now we define a function $\varphi=\varphi_\kappa$ on the image of $\mathcal{N}: N'\rightarrow M$ using $\textbf{1}_{\overline{N}_{0,\kappa}}$, and connect $j_{\sigma,\kappa}(\dot{n})$ with $B^M_\varphi(\dot{m},f)$.

\begin{prop}\label{Bessel-function-integral} There exists a function $\varphi=\varphi_\kappa$ on the image of $N$, depending only on $\kappa$, such that Assumption \ref{conjugate-invariance} is satisfied and
\[j_{\sigma,\kappa}(\dot{n})=B^M_{\varphi}(\dot{m},f).\] To emphasize that they are functions on $R'$, we also write this as \[j_{\sigma,\kappa}(\underline{c})=B^M_\varphi(\mathcal{N}(\underline{c}),f)\]    
\end{prop}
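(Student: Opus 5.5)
We define $\varphi_\kappa$ by transporting the cutoff $\mathbf 1_{\overline N_{0,\kappa}}(z\,\cdot\,z^{-1})$ from $\overline N$ to $M$ through the Bruhat map $\mathcal N$. For $m$ in the image of $\mathcal N$, write $m=\mathcal N(n)$ with $\dot w_0^{-1}n=mn'\overline n$. Put
\[
\varphi_\kappa(m):=\mathbf 1_{\overline N_{0,\kappa}}\bigl(z(n)\,\overline n\,z(n)^{-1}\bigr),
\]
where $z(n)=\alpha^\vee(\varpi^{d+g}x_\alpha(n))$ and $x_\alpha(n)$ is the $x_\alpha$-coordinate of $\dot w_0^{-1}\overline n\dot w_0$. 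We then extend $\varphi_\kappa$ to $M$ following the cutoff convention (CE).

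The first task is well-definedness. By Corollary~\ref{finiteetale2}, $\mathcal N$ is étale of degree $2$, with fibres exchanged by $\mathrm{Ad}(\alpha^\vee(-1))$ (Lemma~\ref{etale-fundamental-group}). Replacing $n$ by $\alpha^\vee(-1)n\alpha^\vee(-1)^{-1}$ replaces $\overline n$ by its conjugate under $\alpha^\vee(-1)$ and $x_\alpha$ by $-x_\alpha$. So $z\overline nz^{-1}$ changes by conjugation by an element of $\alpha^\vee(\mathcal O_F^\times)$. Lemma~\ref{lem:root-subgroup-filtration-opposite-N} shows that $\overline N_{0,\kappa}$ is stable under such conjugation, hence $\varphi_\kappa$ is well defined. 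The same lemma shows that $\varphi_\kappa$ depends on the $Z_M^0$-variable only through $|t|$. This gives the required invariance under central elements of absolute value one, and makes the passage $m\mapsto m'$ harmless.

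Next we check the twisted equivariance. For $u\in U_M$, the proof of Proposition~\ref{BruhatDecomp} gives
\[
\mathcal N(unu^{-1})=\Theta_M(u)\,m\,u^{-1},\qquad \overline n\mapsto u\overline nu^{-1},
\]
where $\Theta_M=\mathrm{Ad}(\dot w_0^{-1})$. By Lemma~\ref{w_0-action-on-roots}, $\dot w_0$ fixes $U_M$, so $\dot w_0^{-1}u\overline nu^{-1}\dot w_0$ has the same $x_\alpha$-coordinate as $\dot w_0^{-1}\overline n\dot w_0$. This is because $\alpha=\alpha_2$ is minimal in $\Phi_N$ and conjugation by $U_M$ only adds higher roots. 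Hence $z(unu^{-1})=z(n)$. Replacing $u$ by $u^{-1}$,
\[
\varphi_\kappa\bigl(\Theta_M(u^{-1})\,m\,u\bigr)=\mathbf 1_{\overline N_{0,\kappa}}\bigl(z\,u^{-1}\overline n u\,z^{-1}\bigr).
\]
This is exactly the integrand cutoff in $j_{\sigma,\kappa}$. Substituting it into the definition of $B^M_\varphi(\dot m,f)$ requires identifying the domains of integration: $U_{M,\dot n}\backslash U_M$ versus $U^{\Theta_M}_{M,\dot m}\backslash U_M$. Both groups are trivial by Theorem~\ref{orbit-space-representative} and Lemma~\ref{trivial-stabilizer}, up to measure zero. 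Combined with $W_f(\dot m u)=\int_{U_M}f(x\dot m u)\psi^{-1}(x)\,dx$, this yields $j_{\sigma,\kappa}(\dot n)=B^M_{\varphi}(\dot m,f)$.

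Assumption~\ref{conjugate-invariance} follows from the same computation with $u\in U_0$. Writing $z=\alpha^\vee(t)$ with $t\in F^\times$, $z$ commutes with $U_M$, so
\[
z\,u^{-1}\overline nu\,z^{-1}=u^{-1}\bigl(z\overline nz^{-1}\bigr)u .
\]
Since $U_0$ normalizes $\overline N_{0,\kappa}$ (Lemma~\ref{lem:root-subgroup-filtration-opposite-N}), the value of $\varphi_\kappa$ is unchanged. The main obstacle is making sure the extension of $\varphi_\kappa$ off $\mathcal N(N')$ keeps both $U_0$-invariance and compact exhaustion. I would first take $X_\kappa$ to be the union of the $\Theta_M$-twisted $U_0$-orbits of the image set, together with a $U_0$-twisted-invariant exhausting family such as $M_{x,\kappa}$ on the complement. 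Only values on $\mathcal N(N')$ enter the identity, so this choice does not affect it.
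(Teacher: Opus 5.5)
Your argument is correct relative to the results it invokes. It uses the paper's ingredients: transport of the cutoff through $\mathcal N$, the deck involution $\mathrm{Ad}(\alpha^\vee(-1))$, the twisted $U_M$-equivariance of $\mathcal N$, $U_0$ normalizing $\overline N_{0,\kappa}$, and trivial stabilizers. Where you differ is in how $\varphi_\kappa$ is built.

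The paper defines $\overline\varphi$ only on the slice image $\mathcal N(R)$ and spreads it to $\mathcal N(N')$ as $\tau_*(\mathbf 1_{U_0}\times\overline\varphi)$. It absorbs $z$ by fixing $\dot m'=\Theta_M(\alpha^\vee(\dot x_\alpha))\dot m\,\alpha^\vee(\dot x_\alpha)^{-1}$, whose $x_\alpha$-coordinate is $1$, and shifting $\kappa$ by $d+g$. You instead define $\varphi_\kappa$ at every point of $\mathcal N(N')$ from that point's own Bruhat decomposition, with $z(n)$ built in. This buys two things.

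\emph{First}, $\varphi_\kappa(\Theta_M(u^{-1})\dot m u)=\mathbf 1_{\overline N_{0,\kappa}}(zu^{-1}\overline{\dot n}uz^{-1})$ holds for every $u\in U_M$ directly from equivariance. By contrast, $\tau_*(\mathbf 1_{U_0}\times\overline\varphi)$, read literally, takes the value $\mathbf 1_{U_0}(u)\,\overline\varphi(\dot m')$ at $\Theta_M(u^{-1})\dot m'u$. It therefore reproduces the cutoff only for $u\in U_0$.

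\emph{Second}, no choice of $\dot m'$ is needed. For $a=\alpha^\vee(t)$ one has $x_\alpha(ana^{-1})=t^{-1}x_\alpha(n)$, so $z(ana^{-1})=z(n)a^{-1}$, which exactly cancels $\overline n\mapsto a\overline n a^{-1}$. Hence $\varphi_\kappa$ is exactly invariant under the $\Theta_M$-twisted action of $Z_M^0$; the deck involution is the case $t=-1$. State it this way rather than as ``dependence only through $|t|$'', since exact invariance is what makes $m\mapsto m'$ harmless.

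The paper's packaging, in exchange, keeps $\varphi$ expressed in the slice coordinates used later. Your final paragraph, on extending $\varphi_\kappa$ off $\mathcal N(N')$, is not needed for this statement.

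One caveat applies equally to your proof and to the paper's. Well-definedness rests on Corollary~\ref{finiteetale2}, i.e.\ on the fibres of $\mathcal N$ being exactly $\{n,\mathrm{Ad}(\alpha^\vee(-1))n\}$, and this deserves an independent check. In Proposition~\ref{BruhatDecomp} one has $u_1=v(c_0-c_1c_5)$ and $u_2=v(c_0)$, where $v(y)=x_{\alpha_{23}}(yc_4^{-1}c_5^{-1})x_{\alpha_{15}}(yc_3^{-1}c_5^{-1})x_{\alpha_4}(-yc_2^{-1}c_5^{-1})$. This $v$ is a homomorphism into an abelian group, and its direction depends only on $t$. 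Since $\Theta_M|_{U_M}=\mathrm{id}$, conjugating $\mathcal N(n)$ by $v(y)$ fixes $t$ and replaces $(t_5,t_6)$ by $(t_5+s,t_6-s)$. For small $s$, the inverse formulas of Corollary~\ref{finiteetale1} show that the result is $\mathcal N$ of another slice point. Taken at face value, this gives one-dimensional fibres, in which case neither construction of $\varphi$ is obviously well defined.
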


\begin{proof} First, by Corollary \ref{finiteetale2} we have a commutative diagram
\[\begin{tikzcd}
U_M\times R \arrow{r} \arrow[swap]{d}{\mathrm{Id}_{U_M}\times \mathcal{N}} & N' \arrow{d}{\mathcal{N}} \\%
U_M\times \mathcal{N}(R) \arrow{r}{\tau}& \mathcal{N}(N')
\end{tikzcd}\] We construct the function $\varphi$ by first constructing it on $\mathcal{N}(R)$.
It is not hard to see that the map $\phi:\overline{n} \mapsto n$  for $n\in R$ through the Bruhat decomposition $\dot{w}_0^{-1}n=mn'\overline{n}$ is a bijection up to a Zariski open dense subset of $\mathcal{N}(R)$.
Define $N_{x,\kappa}=\exp(\mathfrak{n}_{x,\kappa})$ where $\mathfrak{n}_{x,\kappa}$ is defined the same way as $\overline{\mathfrak{n}}_{x,\kappa}$, but replace $\Phi_N^-$ by $\Phi_N$, the set of roots in $N$. Then $\overline{n}\in \overline{N}_{0,\kappa}\Leftrightarrow n\in N_{0,\kappa}$. Next, by Corollary \ref{finiteetale2} and Lemma \ref{etale-fundamental-group}, the map $\mathcal{N}: N' \rightarrow M$ is \'etale of degree 2 onto its image, with a generator of the \'etale fundamental group given by the adjoint action of $\alpha^\vee(-1)$. On the other hand, $\overline{N}_{0,\kappa}$ is invariant under this action, so is $N_{0,\kappa}$. It follows that $\overline{\varphi}:= (\mathcal{N}\circ\phi)_*(\textbf{1}_{\overline{N}_{0,\kappa}})$ is a well-defined function on $\mathcal{N}(R)$.
Moreover, since $U_M\times \mathcal{N}(R)\overset{\tau}{\rightarrow}\mathcal{N}(N')$ is an isomorphism, and recall that we have an open compact subgroup $U_0$ of $U_M$ such that $\overline{N}_{0,\kappa}$ is invariant under the conjugation by $U_0$, the function 
\[\varphi=\varphi_\kappa:=\tau_*(\textbf{1}_{U_0}\times \overline{\varphi})=\tau_*(\textbf{1}_{U_0}\times (\mathcal{N}\circ\phi)_*(\textbf{1}_{\overline{N}_{0,\kappa-d-g}}))\] is well defined on $\mathcal{N}(N')\subset M$, where $\textbf{1}_{U_0}$ is the characteristic function of $U_0$. By construction, $\varphi$ is invariant under translation by elements in $Z_M$ of absolute value 1. As discussed in the beginning of Section \ref{partial-Bessel-integral}, $\varphi$ is independent of the choice of the representative $\dot{m}'$ of $\dot{m}$ under the $\Theta_M$-twisted conjugate action by $Z_M$. We may therefore set $\dot{m}'=\Theta_M(\alpha^\vee(\dot{x}_\alpha))\dot{m}\alpha^\vee(\dot{x}_\alpha)^{-1}$. 
By the $Z_M^0$- and $U_M$-equivariance of $\mathcal{N}$, the construction of $\varphi$ also implies that it is invariant under the $\Theta_M$-twisted conjugate action of $U_0$. Since $\alpha^\vee(t)\overline{N}_{0,\kappa}\alpha^\vee(t)^{-1}=\overline{N}_{0,\kappa+v_F(t)}$ by Lemma \ref{lem:root-subgroup-filtration-opposite-N}, we also have \[\varphi(\Theta_M(u)^{-1}\dot{m}'u)=\textbf{1}_{\overline{N}_{0,\kappa}}(zu^{-1}\overline{\dot{n}}u z^{-1}),\] where  $z=\alpha^\vee(\varpi^{d+g}\dot{x}_\alpha)$.
Finally, since  $U_{M,\dot{n}}=U_{M,\dot{m}}^{\Theta_M}=1$ up to a set of measure zero in our case, Assumption \ref{conjugate-invariance} also hold by Lemma \ref{trivial-stabilizer}, and we obtain that $j_{\sigma,\kappa}(\dot{n})=B^M_\varphi(\dot{m},f)$ as desired.
    
\end{proof}

 Recall that we have a map $\xi: M\rightarrow \mathrm{GL}_1\times\mathrm{GL}_6$ as in section \ref{GMpair} and we set $\sigma=\xi^*(1\otimes\pi)$. The following lemma shows how the partial Bessel functions vary under twisting by characters. 

\begin{lem}\label{twisted-partial-Bessel-function}Let $\dot{n}=\prod_{i=0}^5x_{\gamma_i}(c_i)$ be an orbit space representative of $Z_M^0U_M\backslash N'$, with $\underline{c}=(c_0,c_1,c_2,c_3,c_4)\in R'$, and $\dot{m}=\mathcal{N}(\dot{n})$. Suppose $\chi$ is a continuous character of $F^\times$, regarded as a character of $\mathrm{GL}_6(F)$ through the determinant. Set $\sigma_\chi=\xi^*(1\otimes(\pi\otimes \chi))$. Then
\[j_{\sigma_\chi,\kappa
}(\underline{c})=\chi^3(c_0^2-c_0c_1-c_2c_3c_4)j_{\sigma,\kappa}(\underline{c}).\]     
\end{lem}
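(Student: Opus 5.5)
The plan is to compute how the twisted Whittaker function changes and then note that everything else in the definition of $j_{\sigma,\kappa}$ is independent of $\chi$.

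\emph{Step 1: reduce to a character of $M$.} Twisting $\pi$ by $\chi\circ\det$ and pulling back along $\xi$ gives
\[
\sigma_\chi=\sigma\otimes(\chi\circ\det\circ\mathrm{pr}_2\circ\xi).
\]
Hence we need the character $\eta:=\det\circ\mathrm{pr}_2\circ\xi$ of $M$. Since $\xi(M)$ lies in the $L$-monoid $\mathcal M^{\wedge^3}$, we have $\det(\mathrm{pr}_2\xi(m))=\nu(\xi(m))^3$. By the lemma showing $\nu\circ\xi=\gamma_0$, this gives $\eta=\gamma_0^3$. As a check, on $\alpha_2^\vee(t)$ we get $\det\mathrm{diag}(1,1,1,t,t,t)=t^3$ and $\gamma_0(\alpha_2^\vee(t))=t$. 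Thus $\sigma_\chi=\sigma\otimes\chi^3(\gamma_0(\cdot))$.

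\emph{Step 2: choose the data for $\sigma_\chi$.} We take $f_\chi(m):=f(m)\chi^3(\gamma_0(m))$. Then $f_\chi\in C_c^\infty(M;\omega_{\sigma_\chi})$. Since $\gamma_0$ is trivial on $U_M\subset M_D$, we get $W_{f_\chi}(m)=\chi^3(\gamma_0(m))W_f(m)$, and $W_{f_\chi}(e)=1$. We use the same $\kappa$ and the same cutoff $\overline N_{0,\kappa}$.

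One point needs care: $z(\underline c)=\alpha^\vee(\varpi^{d+g}\dot x_\alpha)$ involves the conductor $g$ of $\omega_\sigma^{-1}(\dot w_0\omega_\sigma)$, which equals $\omega_\pi^{2}$ on $\alpha^\vee$. Twisting changes $\omega_\pi$ to $\omega_\pi\chi^6$, so $g$ may change. I will argue that $j_{\sigma,\kappa}$ is independent of the choice of conductor shift once $\kappa$ is large. Shahidi's Theorem 6.2 holds for all sufficiently large $\kappa$, and $z$ only shifts $\kappa$ by a bounded amount by Lemma \ref{lem:root-subgroup-filtration-opposite-N}. I therefore expect to take the same $z$ for both, enlarging $\kappa$ if needed. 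I regard this as the main subtle point and will state it as a convention: the same $z$ and the same $\kappa$ are used for $\sigma$ and $\sigma_\chi$.

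\emph{Step 3: compute the integrand.} For $u\in U_M$ we have $\gamma_0(\dot m u)=\gamma_0(\dot m)$. So every value of the integrand in $j_{\sigma_\chi,\kappa}(\underline c)$ equals $\chi^3(\gamma_0(\dot m))$ times the corresponding value for $\sigma$. The domain $U_{M,\dot n}\backslash U_M$, the cutoff term and the factor $\psi^{-1}(u)$ do not involve $\chi$. Pulling the constant out gives
\[
j_{\sigma_\chi,\kappa}(\underline c)=\chi^3(\gamma_0(\dot m))\,j_{\sigma,\kappa}(\underline c).
\]

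\emph{Step 4: evaluate $\gamma_0(\dot m)$.} The computation following the local coefficient proposition, which uses Proposition \ref{BruhatDecomp} and Lemma \ref{gamma_0(w_M)}, gives $\gamma_0(\dot m)=c_0^2-c_0c_1-c_2c_3c_4$ at $c_5=1$. Substituting this into Step 3 yields the formula in the lemma.
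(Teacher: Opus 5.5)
Your proof is correct and is essentially the paper's. The paper evaluates $\chi(\det\mathrm{pr}_2\xi(\dot m))$ directly on the factorization $\dot m=u_1\dot w_M a u_2$, where everything except $\alpha_2^\vee(a_2)$ lies in $M_D$ and $\det\mathrm{diag}(1,1,1,t,t,t)=t^3$ gives $\chi^3(a_2)$; you package the same fact as $\det\circ\mathrm{pr}_2\circ\xi=\gamma_0^3$ via the $L$-monoid and $\nu\circ\xi=\gamma_0$, then reuse $\gamma_0(\dot m)=c_0^2-c_0c_1-c_2c_3c_4$. Your explicit choices $f_\chi=f\cdot(\chi^3\circ\gamma_0)$ and the same $z$ for $\sigma$ and $\sigma_\chi$ are tacit in the paper, but note that changing the conductor in $z$ is not harmless pointwise: replacing $g$ by $g_\chi$ turns $j_{\sigma,\kappa}$ into $j_{\sigma,\kappa-(g_\chi-g)}$ by Lemma \ref{lem:root-subgroup-filtration-opposite-N}, so it is this convention, not an independence statement, that gives the identity with the same $\kappa$.
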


\begin{proof} Recall that we have a commutative diagram
\[ \begin{tikzcd}
A_M\times M_D \arrow{r}{\overline{\xi}} \arrow[swap]{d}{} & \mathrm{GL}_1\times \mathrm{GL}_6 \arrow{d}{\mathrm{Id}} \\%
M \arrow{r}{\xi}& \mathrm{GL}_1\times\mathrm{GL}_6
\end{tikzcd}
\] such that $\overline{\xi}(\gamma_0^\vee(t), x)=(t^2,tx)$, and $\xi(\alpha_2^\vee(t))=(t,\mathrm{diag}(1,1,1,t,t,t))$. Since $\mathcal{N}(\dot{n})=\dot{m}=u_1w_Ma u_2$ where $a$ is of the form $a=\alpha_1^\vee(a_1)\alpha_2^\vee(a_2)\alpha_3^\vee(a_3)\alpha_4^\vee(a_4)\alpha_5^\vee(a_3)\alpha_6^\vee(a_1)$. Each $a_i$ is a rational function of $\underline{c}$. In particular, $a_2=c_0^2-c_0c_1-c_2c_3c_4$  by setting $c_5=1$ in Proposition \ref{BruhatDecomp}. Note that the image of $\alpha_i^\vee(a_i)$ lies in $M_D\simeq \mathrm{SL}_6$ for $i\neq 2$. As $\chi$ is defined through the determinant map, it follows that \[\chi(\xi(\dot{m}))=\chi^3(a_2)=\chi^3(c_0^2-c_0c_1-c_2c_3c_4)\] The assertion of the lemma follows immediately from this.
    
\end{proof}

Note that
$\omega_{\pi\otimes\chi}=\omega_\pi\chi^6$. By Proposition \ref{local-coeff-formula-1}, Lemma \ref{twisted-partial-Bessel-function}, and Proposition \ref{Bessel-function-integral},  we obtain 
\begin{prop}\label{local-coeff-formula-final}
     Let $\pi$ be a supercuspidal representation of $\mathrm{GL}_6(F)$, and $\chi$ a continuous character of $F^\times$, identified as a character of $\mathrm{GL}_6(F)$ through the determinant. Let $\sigma=\xi^*(1\otimes\pi)$ and $\sigma_\chi=\xi^*(1\otimes(\pi\otimes\chi))$ as before.  Take $f\in C^\infty_c(M;\omega_\sigma)$ such that $W_f$ is a non-zero Whittaker function of $\sigma$, normalized so that $W_f(e)=1$.  Then
$$
C_\psi(s,\sigma_\chi)^{-1}
=\gamma(4 s, \omega^2_\pi\chi^{12}, 
\psi^{-1})\int_{(F^\times)^5}j_{\sigma_\chi,\kappa}(\underline{c})\omega_\pi^{-2}\chi^{-12}(\frac{c_1-c_0 }{c_0^2 - c_0c_1 - c_2c_3c_4})\vert c_1-c_0 \vert^{-4s}$$$$\cdot\vert c_0^2-c_0c_1-c_2c_3c_4\vert^{5s+\frac{11}{2}}\vert c_3\vert^2\vert c_4\vert^4\prod_{i=0}^4dc_i$$$$=
\gamma(4 s, \omega^2_\pi\chi^{12}, 
\psi^{-1})\int_{(F^\times)^5}B^M_\varphi(\mathcal{N}(\underline{c}),f)\omega_\pi^{-2}\chi^{-12}(c_1-c_0)\vert c_1-c_0 \vert^{-4s}$$$$\cdot\omega_\pi^2\chi^{15}(c_0^2-c_0c_1-c_2c_3c_4)\vert c_0^2-c_0c_1-c_2c_3c_4\vert^{5s+\frac{11}{2}}\vert c_3\vert^2\vert c_4\vert^4\prod_{i=0}^4dc_i
$$
for sufficiently large $\kappa$, where $\gamma(4 s, \omega^2_\pi\chi^{12}, 
\psi^{-1})$ is the Tate $\gamma$-factor associated to the character $\omega_{\pi\otimes\chi}^2=\omega_\pi^2\chi^{12}$.
\end{prop}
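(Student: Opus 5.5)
The plan is to substitute $\sigma_\chi$ for $\sigma$ in Proposition \ref{local-coeff-formula-1} and then rewrite the integrand using Lemma \ref{twisted-partial-Bessel-function} and Proposition \ref{Bessel-function-integral}.

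First, apply Proposition \ref{local-coeff-formula-1} to $\pi\otimes\chi$, which is again supercuspidal with central character $\omega_{\pi\otimes\chi}=\omega_\pi\chi^6$. The Tate factor becomes $\gamma(4s,\omega_\pi^2\chi^{12},\psi^{-1})$, and the character on $\dot{x}_\alpha$ becomes $\omega_\pi^{-2}\chi^{-12}$. This gives the first displayed equality directly. The measure $d\dot n$, the expression for $\dot x_\alpha$ and the value $\gamma_0(\dot m)$ are unchanged, since they depend only on the geometry of $N'$ and not on the representation.

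For the second equality:
\begin{itemize}
\item Split $\omega_\pi^{-2}\chi^{-12}(\dot x_\alpha)$ multiplicatively as
\[
\omega_\pi^{-2}\chi^{-12}(c_1-c_0)\cdot\omega_\pi^{2}\chi^{12}(c_0^2-c_0c_1-c_2c_3c_4).
\]
\item By Lemma \ref{twisted-partial-Bessel-function}, replace $j_{\sigma_\chi,\kappa}(\underline c)$ with $\chi^3(c_0^2-c_0c_1-c_2c_3c_4)\,j_{\sigma,\kappa}(\underline c)$. Combined with the previous factor, this produces $\omega_\pi^2\chi^{15}(c_0^2-c_0c_1-c_2c_3c_4)$.
\item By Proposition \ref{Bessel-function-integral}, $j_{\sigma,\kappa}(\underline c)=B^M_\varphi(\mathcal N(\underline c),f)$ for $f\in C_c^\infty(M;\omega_\sigma)$.
\end{itemize}

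The one point needing care is that the cutoff parameter $\kappa$ and the element $z=\alpha^\vee(\varpi^{d+g}\dot x_\alpha)$ now involve the conductor $g$ of $\omega_{\sigma_\chi}^{-1}(\dot w_0\omega_{\sigma_\chi})$, which changes with $\chi$. I would handle this by noting that the proof of Lemma \ref{twisted-partial-Bessel-function} already uses the same $z$ for both $\sigma$ and $\sigma_\chi$. Concretely, I fix $z$ and $\kappa$ from the twisted data in both partial Bessel functions. Shahidi's formula holds for every sufficiently large $\kappa$, so the cutoff $\varphi_\kappa$ in Proposition \ref{Bessel-function-integral} is then built with that same $\kappa$, and the identity holds for all sufficiently large $\kappa$.
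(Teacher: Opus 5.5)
Your proposal is correct and follows the paper's own route. The paper likewise applies Proposition \ref{local-coeff-formula-1} to $\pi\otimes\chi$ with $\omega_{\pi\otimes\chi}=\omega_\pi\chi^6$, splits $\omega_\pi^{-2}\chi^{-12}(\dot x_\alpha)$ multiplicatively, and then invokes Lemma \ref{twisted-partial-Bessel-function} and Proposition \ref{Bessel-function-integral}.

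The paper does not mention that the conductor $g$ in $z=\alpha^\vee(\varpi^{d+g}\dot x_\alpha)$ changes with $\chi$. Your fix handles this correctly for each fixed $\chi$: use the twisted $z$ on both sides, or equivalently shift $\kappa$ by the difference of conductors using Lemma \ref{lem:root-subgroup-filtration-opposite-N}.
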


\subsection{Proof of main theorem} By the structure theorem of representations of p-adic groups, it suffices to prove the stability of local coefficients for $\psi$-generic supercuspidal representations of $\mathrm{GL}_6(F)$. Suppose $\pi_1,\pi_2$ are $\psi$-generic supercuspidal representations of $\mathrm{GL}_6(F)$ with the same central character $\omega_\pi:=\omega_{\pi_1}=\omega_{\pi_2}$, and let $\sigma_i=\xi^*(1\otimes\pi_i)$, $(i=1,2)$. Set $\omega_{\sigma}:=\omega_{\sigma_1}=\omega_{\sigma_2}$. Given a continuous character $\chi$ of $F^\times$, set $\sigma_{i,\chi}=\xi^*(1\otimes(\pi_i\otimes\chi))$. We will show that
\[C_\psi(s,\sigma_{1,\chi})=C_\psi(s,\sigma_{2,\chi})\] when $\chi$ is sufficiently ramified.

Choose $f_1,f_2\in C^\infty_c(M;\omega_\sigma)$ to be matrix coefficients of $\sigma_1$ and $\sigma_2$ respectively, normalized so that their corresponding Whittaker functions $W_{f_i}$ satisfy that $W_{f_1}(e)=W_{f_2}(e)=1$. Let $\kappa$ be sufficiently large such that Proposition 
\ref{Asymptoticformula} holds for $f_1,f_2$ and the auxiliary function $f_0$, and also that
Proposition \ref{local-coeff-formula-final} holds. Apply the asymptotic expansion formula in Proposition \ref{Asymptoticformula} to the integral formula in Proposition \ref{local-coeff-formula-final}, we obtain
\[C_\psi(s,\sigma_{1,\chi})^{-1}-C_\psi(s,\sigma_{2,\chi})^{-1}=\gamma(4s, \omega_\pi^2\chi^{12},\psi^{-1})D_\chi(s)\]
where $$D_\chi(s)=\int_{(F^\times)^5}(B^M_\varphi(\mathcal{N}(\underline{c}),f_1)-B^M_\varphi(\mathcal{N}(\underline{c}),f_2))\omega_\pi^{-2}\chi^{-12}(c_1-c_0)\vert c_1-c_0 \vert^{-4s}$$$$\cdot\omega_\pi^2\chi^{15}(c_0^2-c_0c_1-c_2c_3c_4)\vert c_0^2-c_0c_1-c_2c_3c_4\vert^{5s+\frac{11}{2}}\vert c_3\vert^2\vert c_4\vert^4\prod_{i=0}^4dc_i$$
It suffices to show that $D_\chi(s)=0$ when $\chi$ is sufficiently ramified. By Corollary \ref{finiteetale3}, the map $\mathcal{N}: R'\rightarrow M$ is an isomorphism onto its image in $M$, up to a Zariski dense subset. This allows us to translate the integral over $R'$ to the image $\mathcal{N}(R')$. To be precise, the map $\mathcal{N}$ can be identified as $\underline{c}=(c_0,c_1,c_2,c_3,c_4)\mapsto \underline{t}=(t_1',t_2',t_3',t_5',t_6') $ with the determinant of the Jacobian $-c_3=-\frac{t_3'}{t_1'}$. By the arguments in the proof of Proposition \ref{uniform-smooth}, if we
write $\mathcal{N}(\underline{c})=\dot{m}=u_1\dot{w}_M a u_2$, then \[a=\alpha_1^\vee(\frac{t_2'}{t_1'})\alpha_2^\vee(t_2')\alpha_3^\vee(\frac{t_2'^2}{t_3'})\alpha_4^\vee(\frac{t_2'^3}{t_2'-t_1'^2t_5't_6'})\alpha_5^\vee(\frac{t_2'^2}{t_3'})\alpha_6^\vee(\frac{t_2'}{t_1'})\] Let $a_1'=\frac{t_2'}{t_1'}, a_2'=t_2' , a_3'=\frac{t_2'^2}{t_3'}, a_4'=\frac{t_2'^3}{t_2'-t_1'^2t_5't_6'}, a_5'=t_5', a_6'=t_6'$. Then $a_6'=\frac{a_1'^2(a_4'-a_2'^2)}{a_2'a_4'a_5'}$. Set $\underline{a}=(a_1',a_2',a_3',a_4',a_5')$. The map $\underline{t}\mapsto \underline{a}$ is birational with the determinant of the Jacobian given by $-\frac{a_3'^2a_4'^2a_5'}{a_2'^4}$. By further assuming that $a_4'-a_2'^2\neq 0$, it becomes an isomorphism. Thus, the composition $\underline{c}\mapsto \underline{t}\mapsto\underline{a}$ is a bijection up to a Zariski dense subset, and we have
\[c_1-c_0=t_1't_5'=\frac{a_2'a_5'}{a_1'}, c_3=\frac{t_3'}{t_1'}=\frac{a_1'a_2'}{a_3'}, c_4=-t_1'=-\frac{a_2'}{a_1'}\] By Corollary \ref{finiteetale3}, 
the determinant of the Jacobian of this map is $-\frac{t_3'}{t_1'}\cdot(-\frac{a_3'^2a_4'^2a_5'}{a_2'^4})=\frac{a_1'a_3'a_4'^2a_5'}{a_2'^3}$.
Consequently,
$$D_\chi(s)=\int_{(F^\times)^5}(B^M_\varphi(\dot{m},f_1)-B^M_\varphi(\dot{m},f_2))\omega_\pi^{-2}\chi^{-12}(\frac{a_2'a_5'}{a_1'})\vert \frac{a_2'a_5'}{a_1'} \vert^{-4s}$$$$\cdot\omega_\pi^2\chi^{15}(a_2')\vert a_2'\vert^{5s+\frac{11}{2}}\vert \frac{a_1'a_2'}{a_3'}\vert^2\vert \frac{a_2'}{a_1'}\vert^4\vert\frac{a_2'^3}{a_1'a_3'a_4'^2a_5'}\vert \prod_{i=1}^5da'_i$$
$$=\int_{F^\times}\int_{\mathrm{Pr}_A(\mathrm{Im}\mathcal{N})}(B^M_\varphi(\dot{m},f_1)-B^M_\varphi(\dot{m},f_2))\omega_\pi^{-2}\chi^{-12}(\frac{a_2'}{a_1'})\vert \frac{a_2'}{a_1'} \vert^{-4s}$$$$\cdot\omega_\pi^2\chi^{15}(a_2')\vert a_2'\vert^{5s+\frac{11}{2}}\vert\frac{a_2'^9}{a_1'^3a_3'^3a_4'^2}\vert \prod_{i=1}^4da'_i\cdot\omega_{\pi}^{-2}\chi^{-12}(a_5')\vert a_5'\vert^{-4s-1}d a_5'$$
By Proposition \ref{Asymptoticformula}, the first term in the asymptotic expansion of $B^M_\varphi(\dot{m},f_i),(i=1,2)$ depends only on $\omega_\pi$ and the auxiliary function $f_0$, hence they cancel by our assumption that $\omega_{\pi_1}=\omega_{\pi_2}$. For the remaining terms, Proposition \ref{Asymptoticformula} implies that for each $w'\in B(M)$ with $d_B(w',e)\ge 1$, one can find $f_{i,w'}\in C^\infty_c(\Omega_{w'};\omega_\sigma)$, such that
\[B^M_\varphi(\dot{m},f_1)-B^M_\varphi(\dot{m},f_2)=\sum_{w'\in B(M), d_B(w',e)\ge 1}(B^M_\varphi(\dot{m},f_{1,w'})-B^M_\varphi(\dot{m},f_{2,w'}))\]
As $A_{w_M}^{w'}A_{w'}\subset A$ is open of finite index, we can replace the inner integral over
$\mathrm{Pr}_A(\mathrm{Im}\mathcal{N})$ as a double integral over
$A_{w_M}^{w'}\cap\mathrm{Pr}_A(\mathrm{Im}\mathcal{N})$ and $A_{R',w'}$. Consequently, 
the inner integral is equal to
$$\sum_{w'\in B(M),d_B(w',e)\ge 1}\int_{A_{w_M}^{w'}\cap\mathrm{Pr}_A(\mathrm{Im}\mathcal{N})}\int_{A_{R',w'}}(B^M_\varphi(\dot{m},f_{1,w'})-B^M_\varphi(\dot{m},f_{2,w'}))$$$$\cdot\omega_\pi^{-2}\chi^{-12}(\frac{a_2'}{a_1'})\vert \frac{a_2'}{a_1'} \vert^{-4s}\omega_\pi^2\chi^{15}(a_2')\vert a_2'\vert^{5s+\frac{11}{2}}\vert\frac{a_2'^9}{a_1'^3a_3'^3a_4'^2}\vert \prod_{i=1}^4da'_i$$
$$=\sum_{w'\in B(M),d_B(w',e)\ge 1}\int_{A_{w_M}^{w'}\cap \mathrm{Pr}_A(\mathrm{Im}\mathcal{N})}\int_{A_{R',w'}}(B^M_\varphi(\dot{m},f_{1,w'})-B^M_\varphi(\dot{m},f_{2,w'}))$$$$\cdot\omega_\pi^{-2}(\frac{a_2'}{a_1'})\vert \frac{a_2'}{a_1'} \vert^{-4s}\omega_\pi^2(a_2')\vert a_2'\vert^{5s+\frac{11}{2}}\vert\frac{a_2'^9}{a_1'^3a_3'^3a_4'^2}\vert \chi({a_1'}^{12}{a_2'}^3)\prod_{i=1}^4da'_i$$
By Proposition \ref{uniform-smooth}, 

 \[(B^M_\varphi(u_1\dot{w}_Mbc'u_2,f_{1,w'})-B^M_\varphi(u_1\dot{w}_mbc'u_2,f_{2,w'}))\omega_{\pi}^2(a_1')\vert \frac{a_2'^{s+\frac{29}{2}}}{a_1'^{-4s+3}a_3'^3a_4'^2}\vert\]
 is uniformly smooth as a function of $c'\in A_{R',w'}$. As discussed in the proof of Proposition \ref{uniform-smooth}, $A_{R',w'}\neq \emptyset$ for all $w'\in B(M)$ with $d_B(w',e)\ge 1$. Moreover, each
 $w'\in B(M)$ with $d_B(w',e)\ge 1$ can be written as $w'=w_Mw_L^{-1}$ for some proper Levi subgroup $L\subsetneq M$, and $A_{w'}=(\cap_{\beta\in \Delta_L}\ker \beta)^\circ$, where $\Delta_L\subsetneq\Delta_M=\{\alpha_1,\alpha_3, \alpha_4,\alpha_5,\alpha_6\}$.
 If we write $c'=\alpha_1^\vee(a'_1)\alpha_2^\vee(a'_2)\alpha_3^\vee(a'_3)\alpha_4^\vee(a'_4)\alpha_5^\vee(a'_2)\alpha_6^\vee(a'_1)$,
 then \[\alpha_1(c')=\alpha_6(c')={a_1'}^2{a_3'}^{-1}, \alpha_3(c')=\alpha_5(c')={a_3'}^2{a_1'}^{-1}{a_4'}^{-1}, \alpha_4(c')={a_4'}^2{a_3'}^{-2}{a_2'}^{-1}\]
 From this, it follows that
 $a'_2$-component of $c'$ is not trivial for all such possible choices of $\Delta_L$. When $\chi$ is sufficiently ramified, one can take a small open compact subgroup $C_0$ of $\mathcal{O}_F^\times$ such that $\chi(a'_2x)\neq \chi(a_2')$ for all $x\in C_0$. As a result, inner integral over $A_{R',w'}$ vanishes, and consequently, $D_\chi(s)=0$. This concludes the proof of the main theorem.

\bibliographystyle{plain}
\bibliography{biblio}

\end{document}